\newcommand{\RR}{{\mathbb{R}}}
\newcommand{\NN}{{\mathbb{N}}}
\newcommand{\CC}{{\mathbb{C}}}
\newcommand{\trans}{{\sf T}}
\newcommand{\asto}{\overset{\rm a.s.}{\longrightarrow}}
\newcommand{\EE}{{\rm E}}
\DeclareMathOperator{\tr}{tr}
\DeclareMathOperator{\diag}{diag}
\newcounter{ctheorem}
\newtheorem{theorem}[ctheorem]{Theorem}
\newcounter{cassumption}
\newtheorem{assumption}[cassumption]{Assumption}
\newproof{proof}{Proof}
\newcounter{ccorollary}
\newtheorem{corollary}[ccorollary]{Corollary}
\newcounter{clemma}
\newtheorem{lemma}[clemma]{Lemma}
\newcounter{cremark}
\newtheorem{remark}[cremark]{Remark}
\journal{Journal of Multivariate Analysis}
\begin{document}

\begin{frontmatter}

\title{The Random Matrix Regime of Maronna's M-estimator with elliptically distributed samples\tnoteref{t1}}
\tnotetext[t1]{ Couillet's work is supported by the ERC MORE EC--120133. Silverstein is with Department of Mathematics, North Carolina State University, NC, USA. Silverstein's work is supported by the U.S. Army Research Office, Grant W911NF-09-1-0266.}



\author[supelec]{Romain Couillet}
\ead{romain.couillet@supelec.fr}
\address[supelec]{Telecommunication department, Sup\'elec, Gif sur Yvette, France}

\author[sondra]{Fr\'ed\'eric Pascal}
\ead{frederic.pascal@supelec.fr}
\address[sondra]{SONDRA Laboratory, Sup\'elec, Gif sur Yvette, France}

\author[ncsu]{Jack W. Silverstein}
\ead{jack@ncsu.edu}
\address[ncsu]{Department of Mathematics, North Carolina State University, NC, USA}




\begin{abstract}
	This article demonstrates that the robust scatter matrix estimator $\hat{C}_N\in\CC^{N\times N}$ of a multivariate elliptical population $x_1,\ldots,x_n\in\CC^N$ originally proposed by Maronna in 1976, and defined as the solution (when existent) of an implicit equation, behaves similar to a well-known random matrix model in the limiting regime where the population $N$ and sample $n$ sizes grow at the same speed. We show precisely that $\hat{C}_N\in\CC^{N\times N}$ is defined for all $n$ large with probability one and that, under some light hypotheses, $\Vert \hat{C}_N-\hat{S}_N\Vert\to 0$ almost surely in spectral norm, where $\hat{S}_N$ follows a classical random matrix model. As a corollary, the limiting eigenvalue distribution of $\hat{C}_N$ is derived. This analysis finds applications in the fields of statistical inference and signal processing. 
\end{abstract}	

\begin{keyword}
random matrix theory \sep robust estimation \sep elliptical distribution.
\end{keyword}

\end{frontmatter}


\section{Introduction and problem statement}

The recent advances in the spectral analysis of large dimensional random matrices, and particularly of matrices of the sample covariance type, have triggered a new wave of interest for (sometimes old) problems in statistical inference and signal processing, which were usually treated under the assumption of a small population size versus a large sample dimension and are now explored assuming similar population and sample sizes. For instance, new source detection schemes have been proposed \citep{CAR08,BIA10,NAD10} based on the works on the extreme eigenvalues of large Wishart matrices \citep{TRA96,ELK07,BAI05,BAI08b,BAI06,COU11e}. New subspace methods in large array processing have also been derived \citep{MES08c,LOU10b,HAC11} that outperform the original MUSIC algorithm \citep{SCH86} by exploiting statistical inference methods on large random matrices \citep{MES08b,COU10b,LOU10}. Most of these signal processing methods fundamentally rely on the structure of the sample covariance matrix $\frac1n\sum_{i=1}^n x_ix_i^*$ formed from independent or linearly dependent (say zero mean) population vectors $x_1,\ldots,x_n\in\CC^N$, the asymptotic spectral properties of which are now well understood \citep{MAR67,SIL95,CHO95,BAI06,ELK07,MES08b}. The field of signal processing however covers a much wider scope than that of sample covariance matrices. Of interest here are the robust scatter matrix estimation techniques (a subclass of M-estimation \citep[Chapter~5]{VAN00}) that allow for a better -- more robust -- empirical approximation of population covariance (or scatter) matrices whenever (i) the probability distribution of the population vectors $x_i$ is heavy-tailed (or is at least far from Gaussian) or (ii) a small proportion of the samples $x_i$ presents an outlier behavior (i.e. follows an unknown distribution, quite different from the distribution of most samples) \citep{HUB64,HUB81,MAR06}. 

While classical sample covariance matrices exhibit a rather simple dependence structure (as they are merely the sum of independent or linearly dependent rank-one matrices), robust scatter matrix estimators are usually of a much more complex form which does not allow for standard random matrix analysis. This work specifically considers a widely spread model of robust scatter estimator, proposed in \citep{MAR76}, which contains as special cases the maximum-likelihood estimator of the scatter matrix for elliptically distributed population vectors, and which is well-behaved and mostly understood in the classical regime where $n\to\infty$ while $N$ is fixed. It is in particular shown in \citep{MAR76} that under some conditions the estimator is well-defined as the unique solution of a fixed-point equation and that the robust estimator converges almost surely (a.s.) to a deterministic matrix (which can be the scatter matrix for elliptical distribution of $x_i$ under correct parametrization). In this article, we revisit the study of Maronna's estimator for elliptically distributed samples using a probabilistic approach (as opposed to the statistical approach used classically in robust estimation theory) under the assumption that $n$ and $N$ are both large and of the same order of magnitude. This work follows after \citep{paper} where the simpler case of vector samples $x_i$ with independent entries was explored. The intuition for the proof of the main results follows in particular from the proof of the main theorem in \citep{paper}. 

Studying robust scatter estimators in the large random matrix regime, i.e. as $N,n$ grow large at the same speed, has important consequences in understanding many signal processing algorithms exploiting these estimators \citep{PAS08,PAS08b}. It also allows one to derive improved methods for source detection and parameter estimation as in \citep{CAR08,BIA10,MES08c,LOU10b,HAC11} for sample covariance matrix-based estimators. Adaptations (and improvements) of these results to robust estimation are currently under investigation.

\bigskip

Before discussing our main results, we first introduce the notations and assumptions taken in this article. We let $x_1,\ldots,x_n\in\CC^N$ be $n$ random vectors defined by $x_i=\sqrt{\tau_i} A_Ny_i$, where $\tau_1,\ldots,\tau_n\in \RR^+$ and $y_1,\ldots,y_n\in\CC^{\bar{N}}$ are random and $A_N\in\CC^{N\times \bar{N}}$ is deterministic. We denote $c_N\triangleq N/n$ and $\bar{c}_N\triangleq \bar{N}/N$ and shall consider the following growth regime.
\begin{assumption}
	\label{ass:cN}
	For each $N$, $c_N<1$, $\bar{c}_N\geq 1$ and
\begin{align*}
	c_- < \liminf_n c_N \leq \limsup_n c_N < c_+
\end{align*}
where $0<c_- < c_+<1$. 
\end{assumption}

The robust estimator under consideration in this article is Maronna's M-estimator $\hat{C}_N$ defined, when it exists, as a (possibly unique) solution to the equation in $Z\in\CC^{N\times N}$
\begin{align}
	\label{eq:def_hatCN}
	Z = \frac1n \sum_{i=1}^n u \left( \frac1N x_i^* Z^{-1} x_i \right) x_ix_i^*
\end{align}
where $u$ satisfies the following properties:
\begin{itemize}
	\item[(i)] $u:[0,\infty)\to (0,\infty)$ is nonnegative continuous and non-increasing
	\item[(ii)] $\phi:x\mapsto xu(x)$ is increasing and bounded with $\lim_{x\to\infty} \phi(x)\triangleq \phi_\infty>1$
	\item[(iii)] $\phi_\infty<c_+^{-1}$.
\end{itemize}

Note that (ii) is stronger than Maronna's original assumption \cite[Condition~(C) p.~53]{MAR76} as $\phi$ cannot be constant on any open interval. The assumption (iii) is also not classical in robust estimation but obviously compliant with the large $n$ assumption made in classical works (for which $c_+=0$). The importance of both assumptions will appear clearly in the proof of the main results.

The statistical hypotheses on $x_1,\ldots,x_n$ are detailed below.
\begin{assumption}
	\label{ass:xi} 
	The vectors $x_i=\sqrt{\tau_i} A_Ny_i$, $i\in\{1,\ldots,n\}$, satisfy the following hypotheses: 
\begin{enumerate}
	\item\label{item:nu} the (random) empirical measure $\nu_n=\frac1n\sum_{i=1}^n{\bm\delta}_{\tau_i}$ satisfies $\int x\nu_n(dx)\asto 1$
	\item\label{item:0} there exist $\varepsilon<1-\phi_\infty^{-1}<1-c_+$ and $m>0$ such that, for all large $n$ a.s. $\nu_n( [0,m) ) < \varepsilon$
	\item\label{item:CN} defining $C_N\triangleq A_NA_N^*$, $C_N\succ 0$ and $\limsup_N \Vert C_N \Vert < \infty$
	\item\label{item:y} $y_1,\ldots,y_n\in\CC^{\bar{N}}$ are independent unitarily invariant complex (or orthogonally invariant real) zero-mean vectors with, for each $i$, $\Vert y_i\Vert^2=\bar{N}$, and are independent of $\tau_1,\ldots,\tau_n$.
\end{enumerate}
\end{assumption}

Item~\ref{item:nu} is merely a normalization condition which, along with Item~\ref{item:CN}, ensures the proper scaling and asymptotic boundedness of the model parameters. Note in particular that Item \ref{item:nu} ensures a.s. tightness of $\{\nu_n\}_{n=1}^\infty$, i.e. for each $\varepsilon>0$, there exists $M>0$ such that, with probability one, $\nu_n([M,\infty))<\varepsilon$ for all $n$. Item~\ref{item:0} mainly ensures that no heavy mass of $\tau_i$ concentrates close to zero; this will ensure the existence of a solution to \eqref{eq:def_hatCN} and avoid technical problems when a solution to \eqref{eq:def_hatCN} exists (and is therefore invertible) but has many eigenvalues close to zero.

Note that Item~\ref{item:y} could be equivalently stated as $y_i=\sqrt{\bar N}\frac{\tilde{y}_i}{\Vert \tilde{y}_i\Vert}$ with $\tilde{y}_i\in\CC^{\bar N}$ standard complex Gaussian (or standard real Gaussian). This remark will be used throughout the proofs of the main results which rely in part on random matrix identities for matrices with independent entries.

All these conditions are met in particular if the $\tau_i$ are independent and identically distributed (i.i.d.) with common unit mean distribution $\nu$ (in which case $\int x\nu_n(dx)\asto 1$ by the strong law of large numbers) such that $\nu(\{0\})=0$. If in addition $N=\bar{N}$, then $x_1,\ldots,x_n$ are i.i.d. zero-mean complex (or real) elliptically distributed with full rank \citep[Theorem~3]{OLI12}. In particular, if $\tau_1$ is Rayleigh distributed, $x_1$ is complex zero mean Gaussian. If $1/\tau_1$ is chi-squared distributed, $x_1$ is instead zero mean complex Student distributed, etc. (see \citep{OLI12} for further discussions and recent results on elliptical distributions).

For simplicity of exposition, most of the article, and in particular the proofs of the main results, will assume the case of complex $x_i$; the results remain however valid in the case of real random variables.

\begin{assumption}
	\label{ass:tau}
	For each $a>b>0$, a.s.
	\begin{align*}
		\limsup_{t\to\infty} \frac{\limsup_n \nu_n( (t,\infty) ) }{\phi(at)-\phi(bt)}=0.
\end{align*}
\end{assumption}

Assumption~\ref{ass:tau} controls the relative speed of the tail of $\nu_n$ versus the flattening speed of $\phi(x)$ as $x\to\infty$. Practical examples satisfying Assumption~\ref{ass:tau} are:
\begin{itemize}
	\item There exists $M>0$ such that, for all $n$, $\max_{1\leq i\leq n}\tau_i<M$ a.s. In this case, $\nu_n( (t,\infty) )=0$ a.s. for $t>M$ while $\phi(at)-\phi(bt)\neq 0$ since $\phi$ is increasing.
	\item For $u(t)=(1+\alpha)/(\alpha+t)$ for some $\alpha>0$, it is easily seen that it is sufficient that $\limsup_n\nu_n( (t,\infty) )=o(1/t)$ a.s. for Assumption~\ref{ass:tau} to hold. In particular, if the $\tau_i$ are i.i.d. with distribution $\nu$, $\limsup_n \nu_n( (t,\infty) )=\nu( (t,\infty) )$ a.s. and, by Markov inequality, it suffices that $\int x^{1+\varepsilon}\nu(dx)<\infty$ for some $\varepsilon>0$.
\end{itemize}

The main contribution of this article is twofold: we first present a result on existence and uniqueness of $\hat{C}_N$ as a solution to \eqref{eq:def_hatCN} (Theorem~\ref{th:uniqueness}) and then study the limiting spectral behavior of $\hat{C}_N$ as $N,n\to\infty$ (Theorem~\ref{th:main}). With respect to existence and uniqueness, we recall that \cite[Theorem~1]{MAR76} ensures the existence and uniqueness of a solution to \eqref{eq:def_hatCN} under the statistical hypothesis that each $N$-subset of $x_1,\ldots,x_n$ spans $\CC^N$ and that $\phi_\infty>n/(n-N)$. While the first condition is met with probability one for continuous distributions of $x_i$, the second condition is restrictive under Assumption~\ref{ass:cN} as it imposes $\phi_\infty>1/(1-c_-)$ which brings a loss in robustness for $c_-$ close to one.\footnote{As commented in \citep{MAR76}, small values of $\phi_\infty$ induce increased robustness to the expense of accuracy.} Our first result is a probabilistic alternative to \cite[Theorem~1]{MAR76} which states that for all large $n$ a.s.,\footnote{As is common in random matrix theory, the probability space under consideration is that engendered by the growing sequences $\{x_1,\ldots,x_n\}_{n=1}^\infty$, with $N,n$ satisfying Assumption~\ref{ass:cN}, so that an event $E_n$ holds true ``for all large $n$ a.s.'' whenever, with probability one, there exists $n_0$ for which $E_n$ is true for all $n\geq n_0$, this $n_0$ possibly depending on the sequence.} \eqref{eq:def_hatCN} has a unique solution. This result uses the probability conditions on $x_1,\ldots,x_n$ and also uses $\phi_\infty<c_+^{-1}$ which, as opposed to \cite[Theorem~1]{MAR76}, enforces more robust estimators. The uniqueness part of the result also imposes that $\phi$ be strictly increasing, while \cite[Theorem~1]{MAR76} allows $\phi(x)=\phi_\infty$ for all large $x$. 

As for the large dimensional behavior of $\hat{C}_N$, in the fixed $N$ large $n$ regime and for i.i.d. $\tau_i$, it is of the form $\hat{C}_N\asto V_N$ where $V_N$ is the unique solution to $V_N=\EE[u(\frac1Nx_1^*V_N^{-1}x_1)x_1x_1^*]$ \cite[Theorem~5]{MAR76}. When the $x_i$ are i.i.d. elliptically distributed and $u$ is such that $\hat{C}_N$ is the maximum-likelihood estimator for $C_N$, then $V_N=C_N$, leading to a consistent estimator for $C_N$. In the random matrix regime of interest here, we show that $\hat{C}_N$ does not converge in any classical sense to a deterministic matrix but satisfies $\Vert \hat{C}_n-\hat{S}_N\Vert\asto 0$ in spectral norm, where $\hat{S}_N$ follows a random matrix model studied in \citep{ZHA09,PAU09,HAC13}. As such, the spectral behavior of $\hat{C}_N$ is easily analyzed from that of $\hat{S}_N$ for $N,n$ large.

In the next section, we introduce some new notations that simplify the analysis of $\hat{C}_N$ and provide an insight on the derivation of our main result, Theorem~\ref{th:main}.

\bigskip

{\it Generic notations:} We denote $\lambda_1(X)\leq\ldots\leq \lambda_N(X)$ the ordered eigenvalues of any Hermitian (or symmetric) matrix $X$. The superscript $(\cdot)^*$ designates transpose conjugate (if complex) for vectors or matrices. The norm $\Vert\cdot\Vert$ is the spectral norm for matrices and the Euclidean norm for vectors. The cardinality of a finite discrete set $\Omega$ is denoted $|\Omega|$. Almost sure convergence is written $\asto$. We use the set notation $\CC^+=\{z\in\CC,\Im[z]>0\}$. The Hermitian (or symmetric) matrix order relations are denoted $A\succeq B$ for $A-B$ nonnegative definite and $A\succ B$ for $A-B$ positive definite. The Dirac measure at point $x\in\RR$ is denoted ${\bm\delta}_x$.

\section{Preliminaries}
\label{sec:preliminaries}

First note from the expression of $\hat{C}_N$ as a (hypothetical) solution to \eqref{eq:def_hatCN} that we can assume $C_N=I_N$ by studying $C_N^{-\frac12}\hat{C}_NC_N^{-\frac12}$ in place of $\hat{C}_N$. Therefore, here and in all the major proofs in the article, without generality restriction, we place ourselves under the assumption $C_N=A_NA_N^*=I_N$.

Our objective is to prove that $\hat{C}_N$ is a well behaved solution of \eqref{eq:def_hatCN} (for all large $n$ a.s.) and to study the spectral properties of $\hat{C}_N$ as $N,n$ grow large. However, the structure of dependence between the rank-one matrices $u(\frac1Nx_i^*\hat{C}_N^{-1}x_i) x_ix_i^*$, $i=1,\ldots,n$, makes the large dimensional analysis of $\hat{C}_N$ via standard random matrix methods impossible (see e.g. \citep{PAS11,SIL06,AND10}) as these methods fundamentally rely on the independence (or simple dependence) of the structuring rank-one matrices. We propose here to show that, in the large $N,n$ regime, $\hat{C}_N$ behaves similar to a matrix $\hat{S}_N$ whose structure is more standard and easily analyzed through classical random matrix results. For this we first need to rewrite the fundamental equation \eqref{eq:def_hatCN} in order to exhibit a sufficiently ``weak'' dependence structure in the expression of $\hat{C}_N$. This rewriting is performed in Section~\ref{sec:rewriting} below. This being done, we then prove that some weakly dependent terms can be well approximated by independent ones in the large $N,n$ regime. Since the final result does not take an insightful form, we provide below in Section~\ref{sec:hint} a hint on how to obtain it intuitively.

\subsection{Rewriting \eqref{eq:def_hatCN}}
\label{sec:rewriting}

We need to introduce some new notations that will simplify the coming considerations. Write $x_i=\sqrt{\tau_i} A_Ny_i\triangleq \sqrt{\tau_i} z_i$ and recall that $C_N=I_N$ for the moment (in particular, $\Vert z_i\Vert$ is of order $\sqrt{N}$ for most $z_i$). If $\hat{C}_N$ is well-defined, we denote $\hat{C}_{(i)}\triangleq \hat{C}_N- \frac1n u(\frac1Nx_i^*\hat{C}_N^{-1}x_i)x_ix_i^*$. 

Remark that $\hat{C}_{(i)}$ depends on $x_i$ only through the terms $u(\frac1Nx_j^*\hat{C}_N^{-1}x_j)$, $j\neq i$, in which the term $\hat{C}_N$ is built on $x_i$. But since $x_i$ is only one among a growing number $n$ of $x_j$ vectors, this dependence structure looks intuitively ``weak''. This informal weak dependence between $x_i$ and $\hat{C}_{(i)}$, along with classical random matrix theory considerations, suggests that the quadratic forms $\frac1N z_i^*\hat{C}_{(i)}^{-1}z_i$, $i=1,\ldots,n$, are all well approximated by $\frac1N\tr \hat{C}_N^{-1}$ (more precisely, this would roughly be a consequence of Lemma~\ref{le:trace_lemma} and Lemma~\ref{le:rank1perturbation} in the Appendix if $z_i$ and $\hat{C}_{(i)}$ were truly independent).

With this in mind, let us rewrite $\hat{C}_N$ as a function of $\frac1N z_i^*\hat{C}_{(i)}^{-1}z_i$ instead of $\frac1N x_i^*\hat{C}_N^{-1}x_i$, $i=1,\ldots,n$. For this, let $Z\in\CC^{N\times N}$ be positive definite such that for each $i$, $Z_{(i)}\triangleq Z-\frac1n u(\tau_i\frac1Nz_i^*Z^{-1}z_i)\tau_i z_iz_i^*$ is positive definite. Using the identity $(A+\tau zz^*)^{-1}z=A^{-1}z/(1+\tau z^*A^{-1}z)$ for invertible $A$, vector $z$, and positive scalar $\tau$, observe that 
\begin{align*}
	\frac1N z_i^* Z^{-1} z_i = \frac{\frac1N z_i^* Z_{(i)}^{-1}z_i }{1+ \tau_i u\left(\tau_i \frac1Nz_i^*Z^{-1}z_i \right) \frac1n z_i^*Z_{(i)}^{-1}z_i}.
\end{align*}
Hence,
\begin{align*}
	\frac1N z_i^* Z_{(i)}^{-1}z_i \left( 1 - c_N \tau_i u\left(\tau_i \frac1Nz_i^*Z^{-1}z_i \right) \frac1N z_i^* Z^{-1} z_i \right) = \frac1N z_i^* Z^{-1} z_i
\end{align*}
which, by the definition of $\phi$, is
\begin{align*}
	\frac1N z_i^* Z_{(i)}^{-1}z_i \left( 1 - c_N \phi\left(\tau_i \frac1Nz_i^*Z^{-1}z_i \right) \right) = \frac1N z_i^* Z^{-1} z_i.
\end{align*}
Using Assumption~\ref{ass:cN} and $\phi_\infty<c_+^{-1}$, taking $n$ large enough to have $\phi(x)\leq \phi_\infty<1/c_N$, this can be rewritten
\begin{align}
	\label{eq:obtaining_g}
	\frac1N z_i^* Z_{(i)}^{-1}z_i = \frac{\frac1N z_i^* Z^{-1} z_i}{1 - c_N \phi\left(\tau_i \frac1Nz_i^*Z^{-1}z_i \right)}.
\end{align}

Now, since $\phi$ is increasing, $g:[0,\infty)\to [0,\infty)$, $x\mapsto x/(1-c_N\phi(x))$ is increasing, nonnegative, and maps $[0,\infty)$ onto $[0,\infty)$. Thus, $g$ is invertible with inverse denoted $g^{-1}$. In particular, from \eqref{eq:obtaining_g},
	\begin{equation*}
		\tau_i\frac1N z_i^* Z^{-1} z_i = g^{-1}\left( \tau_i \frac1N z_i^* Z_{(i)}^{-1}z_i \right).
	\end{equation*}
Call now $v: [0,\infty)\to [0,\infty)$, $x\mapsto u\circ g^{-1}$. Since $g$ is increasing and nonnegative and $u$ is non-increasing, $v$ is non-increasing and positive. Moreover, $\psi:x\mapsto xv(x)$ satisfies:
\begin{align*}
	\psi(x) &= x u(g^{-1}(x)) = g(g^{-1}(x)) u(g^{-1}(x)) = \frac{\phi(g^{-1}(x))}{1-c_N\phi(g^{-1}(x))}
\end{align*}
which is increasing, nonnegative, and has limit $\psi^N_\infty \triangleq \phi_\infty/(1-c_N\phi_\infty)$ as $x\to\infty$. Hence, $v$ and $\psi$ keep the same properties as $u$ and $\phi$, respectively. 
	
With these notations, to prove the existence and uniqueness of a solution to \eqref{eq:def_hatCN}, it is equivalent to prove that the equation in $Z$
\begin{align*}
	Z &= \frac1n\sum_{i=1}^n \tau_i v\left( \tau_i \frac1N z_i^* Z_{(i)}^{-1} z_i \right)  z_iz_i^*
\end{align*}
has a unique positive definite solution. But for this, it is sufficient to prove the uniqueness of $d_1,\ldots,d_n\geq 0$ satisfying the $n$ equations:
\begin{align}
	\label{eq:def_hatCN_2}
	d_j = \frac1N z_j^*\left(\frac1n \sum_{i\neq j} \tau_i v \left( \tau_i d_i \right) z_iz_i^*\right)^{-1}z_j,~1\leq j\leq n.
\end{align}

Indeed, if these $d_i$ are uniquely defined, then so is the matrix 
\begin{align}
	\label{eq:hatCN_2}
	\hat{C}_N = \frac1n \sum_{i=1}^n \tau_i v \left( \tau_i d_i \right) z_iz_i^*
\end{align}
with $d_i=\frac1N z_i^* \hat{C}_{(i)}^{-1}z_i$, $\hat{C}_{(i)}=\hat{C}_N - \frac1n u(\frac1Nx_i^*\hat{C}_N^{-1}x_i)x_ix_i^*$ (the existence follows from taking the $d_i$ solution to \eqref{eq:def_hatCN_2} and write $\hat{C}_N$ as in \eqref{eq:hatCN_2}, while uniqueness follows from the fact that \eqref{eq:hatCN_2} cannot be written with a different set of $d_i$ from the uniqueness of the solution to \eqref{eq:def_hatCN_2}).

This is the approach that is pursued to prove Theorem~\ref{th:uniqueness}, based on the results from \cite{YAT95}. Equation~\eqref{eq:hatCN_2}, which is equivalent to \eqref{eq:def_hatCN} (with $\hat{C}_N$ in place of $Z$), will be preferably used in the remainder of the article.

\subsection{Hint on the main result}
\label{sec:hint}

Assume here that the $d_i$ above are indeed unique for all large $n$ so that $\hat{C}_N$ is well defined. We provide some intuition on the main result.

From the discussion in Section~\ref{sec:rewriting}, we may expect the terms $d_i$ to be all close to $\frac1N\tr \hat{C}_N^{-1}$ for $N,n$ large enough. We may also expect $\frac1N\tr \hat{C}_N^{-1}$ to have a deterministic equivalent $\gamma_N$, i.e. there should exist a deterministic sequence $\{\gamma_N\}_{N=1}^\infty$ such that $|\frac1N\tr \hat{C}_N^{-1}-\gamma_N|\asto 0$. Let us say that all this is true. Since $\frac1N\tr \hat{C}_N^{-1}$ is the Stieltjes transform $\frac1N\tr (\hat{C}_N-zI_N)^{-1}$ of the empirical spectral distribution of $\hat{C}_N$ at point $z=0$, and since $\hat{C}_N$ is expected to be close to $\frac1n\sum_i \tau_i v(\tau_i \gamma_N) z_iz_i^*$ with now $v(\tau_i \gamma_N)$ independent of $z_1,\ldots,z_n$, from classical random matrix works, e.g. \citep{SIL95}, we would expect that one such $\gamma_N$ be given by (recall that $C_N=I_N$)
\begin{align*}
	\gamma_N = \left( \frac1n \sum_{i=1}^n \frac{\tau_i v(\tau_i \gamma_N) }{1+c_N\tau_i v(\tau_i \gamma_N) \gamma_N} \right)^{-1}
\end{align*}
if this fixed-point equation makes sense at all. This can be equivalently written as
\begin{align}
	\label{eq:fp_gamma}
	1 = \frac1n\sum_{i=1}^n \frac{\psi(\tau_i \gamma_N)}{1+c_N\psi(\tau_i\gamma_N)}.
\end{align}

We in fact prove in Section~\ref{sec:main_results} that such a positive $\gamma_N$ is well defined, unique, and satisfies $\max_{1\leq i\leq n}|d_i-\gamma_N|\asto 0$ (under correct assumptions). Proving this result is the main difficulty of the article.

This convergence, along with \eqref{eq:hatCN_2}, will then ensure that for all large $n$ a.s.
\begin{align*}
	\left\Vert \hat{C}_N - \hat{S}_N \right\Vert \asto 0
\end{align*}
where 
\begin{align*}
	\hat{S}_N=\frac1n\sum_{i=1}^n v\left( \tau_i \gamma_N \right) \tau_i z_iz_i^*
\end{align*}
with $\gamma_N$ the unique positive solution to \eqref{eq:fp_gamma}. It is then immediate under Assumption~\ref{ass:xi}--\ref{item:CN} to see that the result holds true also for $C_N\neq I_N$.

The major interest of this convergence in spectral norm is that $\hat{S}_N$ is a known and easily manipulable object, as opposed to $\hat{C}_N$. The result therefore conveys a lot of information about $\hat{C}_N$ among which the fact that its largest and smallest eigenvalues are almost surely bounded and bounded away from zero for all large $n$ (which is not in general the case of $\frac1n\sum_{i=1}^nx_ix_i^*$ for $\tau_i$ with unbounded support).

\section{Main results}
\label{sec:main_results}

We now make the statements of Section~\ref{sec:hint} rigorous. The first result ensures the existence and uniqueness of a solution $\hat{C}_N$ to \eqref{eq:def_hatCN} for $n$ large enough.

\begin{theorem}[Uniqueness]
	\label{th:uniqueness}
	Let Assumptions~\ref{ass:cN}~and~\ref{ass:xi} hold, with $\limsup_N \Vert C_N\Vert$ non necessarily bounded. Then, for all large $n$ a.s., \eqref{eq:def_hatCN} has a unique solution $\hat{C}_N$ given by
	\begin{equation*}
		\hat{C}_N = \lim_{ \substack{t\to\infty \\ t\in \NN } } Z^{(t)}
	\end{equation*}
	where $Z^{(0)}\succ 0$ is arbitrary and, for $t\in \NN$,
	\begin{equation*}
		Z^{(t+1)} = \frac1n \sum_{i=1}^n u \left( \frac1Nx_i^*\left(Z^{(t)}\right)^{-1}x_i \right) x_ix_i^*.
	\end{equation*}
\end{theorem}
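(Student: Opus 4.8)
The plan is to exploit the reduction worked out in Section~\ref{sec:rewriting}: existence and uniqueness of a solution to \eqref{eq:def_hatCN} is equivalent to existence and uniqueness of a positive vector $(d_1,\ldots,d_n)$ solving the system \eqref{eq:def_hatCN_2}, and the iteration $Z^{(t)}$ on matrices corresponds (once one checks the $Z^{(t)}$ stay positive definite) to the natural fixed-point iteration on the $d_i$. So the main work is to cast \eqref{eq:def_hatCN_2} as a \emph{standard interference function} in the sense of \cite{YAT95} and verify its three defining properties --- positivity, monotonicity, and scalability --- which then yield both uniqueness of the fixed point and global convergence of the iteration from any positive starting point.

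Concretely, I would define, for $\d=(d_1,\ldots,d_n)\in(0,\infty)^n$, the map $\mathcal{I}(\d)=(\mathcal{I}_1(\d),\ldots,\mathcal{I}_n(\d))$ with
\begin{align*}
	\mathcal{I}_j(\d) = \frac1N z_j^*\left(\frac1n \sum_{i\neq j} \tau_i v(\tau_i d_i)\, z_iz_i^*\right)^{-1} z_j .
\end{align*}
Positivity of $\mathcal{I}_j(\d)$ is clear once the inner matrix is invertible; here is where Assumption~\ref{ass:xi}--\ref{item:0} and $c_N<1$ enter, guaranteeing that for all large $n$, a.s., the $N$-dimensional span condition holds and $\frac1n\sum_{i\neq j}\tau_i v(\tau_i d_i) z_iz_i^*\succ 0$ uniformly (the $\tau_i$ bounded below away from $0$ on a large enough fraction of indices, together with $v>0$, forces this). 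Monotonicity, $\d\leq \d'\Rightarrow \mathcal{I}(\d)\leq \mathcal{I}(\d')$ componentwise, follows because $v$ is non-increasing: increasing $d_i$ decreases $\tau_i v(\tau_i d_i)$, hence shrinks the matrix $\frac1n\sum_{i\neq j}\tau_i v(\tau_i d_i)z_iz_i^*$ in the positive-definite order, hence enlarges its inverse and the quadratic form. Scalability, $\mathcal{I}(\alpha\d) < \alpha\,\mathcal{I}(\d)$ for $\alpha>1$, is the step that uses the structural assumptions on $\phi$ (equivalently $\psi$): one needs $\tau_i v(\tau_i \alpha d_i) > \alpha^{-1}\tau_i v(\tau_i d_i)$, i.e. $\psi(\alpha x) < \alpha\,\psi(x)$, which holds because $\psi$ is increasing and bounded --- the strict increase (from assumption (ii) being strictly stronger than Maronna's) prevents equality and gives the strict inequality needed for a \emph{unique} fixed point. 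The bound $\psi_\infty^N=\phi_\infty/(1-c_N\phi_\infty)<\infty$, valid by $\phi_\infty<c_+^{-1}$ and Assumption~\ref{ass:cN}, is what makes the scalability margin uniform and also ensures the iterates cannot run off to infinity.

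Given the three properties, \cite{YAT95} delivers: the system \eqref{eq:def_hatCN_2} has at most one positive solution, and the iteration $\d^{(t+1)}=\mathcal{I}(\d^{(t)})$ converges to it from any positive start \emph{provided a fixed point exists}. Existence I would obtain by exhibiting an order interval $[\,\underline\d,\overline\d\,]$ that $\mathcal{I}$ maps into itself: an explicit upper bound $\overline d$ comes from $v(\cdot)\leq v(0)$ and the smallest eigenvalue of $\frac1n\sum_{i\neq j}\tau_i v(\tau_i d_i)z_iz_i^*$ being bounded below (again Assumption~\ref{ass:xi}--\ref{item:0} plus standard extreme-eigenvalue bounds for matrices with unitarily invariant columns, cf. the Appendix lemmas), and a strictly positive lower bound $\underline d$ comes from $v(\cdot)\geq$ its value at $\overline d$ together with the spectral norm of $\frac1n\sum z_iz_i^*$ being a.s.\ bounded; monotonicity then traps the iterates in this interval and compactness/monotone convergence gives a fixed point. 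Translating back, $Z^{(t)}$ stays positive definite for all large $n$ (its smallest eigenvalue is controlled by the same lower bounds once $t\geq 1$), and $\hat C_N=\lim_t Z^{(t)}=\frac1n\sum_i\tau_i v(\tau_i d_i)z_iz_i^*$ with the $d_i$ the unique fixed point, which is the claimed formula after undoing the change of variables $u=v\circ g$.

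\textbf{Main obstacle.} The delicate point is not the Yates machinery itself but securing, \emph{for all large $n$ almost surely and uniformly in} $j$ and in $\d$ ranging over the relevant interval, that the deflated matrices $\frac1n\sum_{i\neq j}\tau_i v(\tau_i d_i)z_iz_i^*$ are invertible with smallest eigenvalue bounded away from $0$ --- this is exactly what Assumption~\ref{ass:xi}--\ref{item:0} is designed for (a mass $\varepsilon<1-c_+$ of $\tau_i$ near zero would be fatal), and it requires a uniform-in-$n$ extreme-eigenvalue estimate for sums of $\tau_i v(\tau_i d_i) z_iz_i^*$ with the $z_i$ having the unitarily invariant structure of Assumption~\ref{ass:xi}--\ref{item:y}, combined with the tightness of $\nu_n$ from Assumption~\ref{ass:xi}--\ref{item:nu}. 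Getting this uniformly over the continuum of admissible $\d$ (rather than at a single $\d$) is where a little care is needed, e.g.\ via the monotonicity of $\mathcal{I}$ to reduce to the two endpoints of the order interval.
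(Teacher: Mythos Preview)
Your overall strategy---reduce to the $d$-system \eqref{eq:def_hatCN_2}, verify that $\mathcal{I}$ is a standard interference function in Yates' sense, and invoke \cite{YAT95}---is exactly the paper's approach, and your checks of positivity, monotonicity, and scalability are essentially correct (a small slip: the inequality you need for scalability is $\psi(\alpha x)>\psi(x)$ for $\alpha>1$ and $x>0$, not $\psi(\alpha x)<\alpha\psi(x)$; the former is immediate from $\psi$ strictly increasing, and your conclusion survives).

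The genuine gap is in the existence/feasibility step. You propose to obtain an upper barrier $\overline d$ with $\mathcal I(\overline d,\ldots,\overline d)\leq\overline d$ from a uniform lower bound on the smallest eigenvalue of $\frac1n\sum_{i\neq j}\tau_i v(\tau_i d_i)z_iz_i^*$. No such uniform bound exists: as all $d_i\to\infty$ one has $\tau_i v(\tau_i d_i)=\psi(\tau_i d_i)/d_i\to 0$, so the matrix collapses and its inverse blows up. Equivalently, $\mathcal I_j(q,\ldots,q)/q$ does \emph{not} tend to $0$ as $q\to\infty$; it tends to the positive limit
\[
\lim_{q\to\infty}\frac{\mathcal I_j(q,\ldots,q)}{q}
=\frac{1-c_N\phi_\infty}{\phi_\infty}\cdot\frac1Nz_j^*\Big(\frac1n\sum_{i\neq j,\,\tau_i\neq 0}z_iz_i^*\Big)^{-1}z_j,
\]
and feasibility amounts to showing this limit is strictly below $1$ for every $j$, for all large $n$ a.s. The paper does this via a concentration estimate for the quadratic form (Lemma~\ref{le:uniform_conv_sum_zi}), which pins it near $(1-\nu_n(\{0\})-c_N)^{-1}$ (respectively $(1-\nu_n(\{0\}))^{-1}$ when $\tau_j=0$), and then uses the \emph{quantitative} content of Assumption~\ref{ass:xi}--\ref{item:0}, namely $\nu_n(\{0\})<1-\phi_\infty^{-1}$, to force the product below $1$. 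This is where the constants $\phi_\infty$, $c_N$, and $\nu_n(\{0\})$ must interlock, and it is precisely the step your sketch does not supply. Your ``Main obstacle'' paragraph flags invertibility of the deflated matrices---which is real but routine once enough $\tau_i$ are nonzero---rather than this sharper limiting inequality, which is the actual crux.
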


Having defined $\hat{C}_N$, the main result of the article provides a random matrix equivalent to $\hat{C}_N$, much easier to study than $\hat{C}_N$ itself.

\begin{theorem}[Asymptotic Behavior]
	\label{th:main}
	Let Assumptions~\ref{ass:cN}--\ref{ass:tau} hold, and let $\hat{C}_N$ be given by Theorem~\ref{th:uniqueness} when uniquely defined as the solution of \eqref{eq:def_hatCN} or chosen arbitrarily if not. Then
	\begin{align*}
		\left\Vert \hat{C}_N - \hat{S}_N \right\Vert \asto 0
	\end{align*}
	where
	\begin{align*}
		\hat{S}_N \triangleq \frac1n\sum_{i=1}^n v(\tau_i \gamma_N) x_ix_i^*
	\end{align*}
	and $\gamma_N$ is the unique positive solution of the equation in $\gamma$
	\begin{align*}
		1 = \frac1n\sum_{i=1}^n \frac{\psi(\tau_i\gamma)}{1+c_N\psi(\tau_i\gamma)}
	\end{align*}
	with the functions $v: x\mapsto (u\circ g^{-1})(x)$, $\psi: x\mapsto xv(x)$, and $g:\RR^+\to \RR^+,~x\mapsto x/(1-c_N\phi(x))$.
\end{theorem}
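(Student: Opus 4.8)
The plan is to work, as Section~\ref{sec:preliminaries} permits, under $C_N=I_N$ and with the reformulation \eqref{eq:def_hatCN_2}--\eqref{eq:hatCN_2}: writing $x_i=\sqrt{\tau_i}z_i$, $d_i=\frac1N z_i^*\hat{C}_{(i)}^{-1}z_i$ and $\hat{C}_N=\frac1n\sum_i\tau_i v(\tau_i d_i)z_iz_i^*$, the whole theorem reduces to $\max_{1\le i\le n}|d_i-\gamma_N|\asto 0$, the remaining steps being short. First I would record the elementary facts about $\gamma_N$: the map $\gamma\mapsto\frac1n\sum_i\psi(\tau_i\gamma)/(1+c_N\psi(\tau_i\gamma))$ is continuous, strictly increasing from $0$ to $\frac1n|\{i:\tau_i>0\}|\,\phi_\infty$, which by Assumption~\ref{ass:xi}--\ref{item:0} exceeds $(1-\varepsilon)\phi_\infty>1$, so $\gamma_N$ is well defined and unique; and Items~\ref{item:nu}--\ref{item:0} together with tightness of $\{\nu_n\}$ confine $\gamma_N$ to a fixed compact $[\gamma_-,\gamma_+]\subset(0,\infty)$ for all large $n$ a.s. Next I would record an exact identity. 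Applying $(A+\beta vv^*)^{-1}v=A^{-1}v/(1+\beta v^*A^{-1}v)$ to $\hat{C}_N=\hat{C}_{(i)}+\frac1n\tau_i v(\tau_i d_i)z_iz_i^*$ gives $\frac1N z_i^*\hat{C}_N^{-1}z_i=d_i/(1+c_N\psi(\tau_i d_i))$; multiplying by $\tau_i v(\tau_i d_i)$, summing, and using $\frac1n\sum_i\tau_i v(\tau_i d_i)z_iz_i^*=\hat{C}_N$ to recognise the left side as $\frac1N\tr(\hat{C}_N\hat{C}_N^{-1})=1$,
\begin{align*}
	1=\frac1n\sum_{i=1}^n\frac{\psi(\tau_i d_i)}{1+c_N\psi(\tau_i d_i)} .
\end{align*}
Since $s\mapsto\psi(\tau_i s)/(1+c_N\psi(\tau_i s))$ is increasing, comparing with the equation defining $\gamma_N$ forces $\bar d_n\le\gamma_N\le\bar D_n$, where $\bar d_n:=\min_{i:\tau_i>0}d_i$ and $\bar D_n:=\max_{i:\tau_i>0}d_i$.

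The core is to show $\bar D_n\to\gamma_N$ and $\bar d_n\to\gamma_N$ a.s. I would decouple the quadratic forms. For fixed $t>0$ put $\hat{C}_{(i)}(t):=\frac1n\sum_{j\neq i}\tau_j v(\tau_j t)z_jz_j^*$ and $g_i(t):=\frac1N z_i^*\hat{C}_{(i)}(t)^{-1}z_i$. By Assumption~\ref{ass:xi}--\ref{item:y}, $z_i$ is independent of $(\{\tau_j\}_j,\{z_j\}_{j\neq i})$, so for each fixed $t$, Lemma~\ref{le:trace_lemma} and Lemma~\ref{le:rank1perturbation} give $\max_i|g_i(t)-\frac1N\tr\hat{C}_N(t)^{-1}|\asto 0$, where $\hat{C}_N(t):=\frac1n\sum_j\tau_j v(\tau_j t)z_jz_j^*$; and $\hat{C}_N(t)$ being a sample-covariance-type matrix with deterministic weights $\tau_j v(\tau_j t)=\psi(\tau_j t)/t$ bounded by $\psi^N_\infty/t$, the deterministic-equivalent results of \citep{SIL95,ZHA09,PAU09,HAC13} give $\frac1N\tr\hat{C}_N(t)^{-1}\asto\bar d(t)$, the unique positive solution of $\frac1n\sum_j\bar d\,\psi(\tau_j t)/(t+c_N\psi(\tau_j t)\bar d)=1$ (the Silverstein fixed point at the point $0$, for $C_N=I_N$). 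The left side being increasing in $\bar d$ and reducing to $h_N(t):=\frac1n\sum_j\psi(\tau_j t)/(1+c_N\psi(\tau_j t))$ when $\bar d=t$, we have $\bar d(t)<t$ when $h_N(t)>1$ and $\bar d(t)>t$ when $h_N(t)<1$; and $h_N$ being increasing with $h_N(\gamma_N)=1$ (this is \eqref{eq:fp_gamma}), $\bar d(t)-t$ is $>0$ for $t<\gamma_N$, $=0$ at $\gamma_N$, $<0$ for $t>\gamma_N$. Since $v$ is non-increasing, $t\mapsto g_i(t)$ and $t\mapsto\frac1N\tr\hat{C}_N(t)^{-1}$ are non-decreasing and $\bar d$ is continuous, so monotone pointwise a.s. convergence is uniform on compacts: $\sup_{t\in K}\max_i|g_i(t)-\bar d(t)|\asto 0$ for every compact $K\subset(0,\infty)$. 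Now $d_j\le\bar D_n$ for all $j$ and $v$ non-increasing give $\hat{C}_{(j^*)}\succeq\hat{C}_{(j^*)}(\bar D_n)$ at the maximising index $j^*$, whence $\bar D_n=d_{j^*}\le g_{j^*}(\bar D_n)$, and symmetrically $\bar d_n\ge g_{i^*}(\bar d_n)$ at the minimising index $i^*$. Granting that $\bar d_n,\bar D_n$ stay in a fixed compact $K$ for all large $n$ a.s., the uniform bound yields $\bar D_n\le\bar d(\bar D_n)+o(1)$ and $\bar d_n\ge\bar d(\bar d_n)-o(1)$; with the sign of $\bar d(\cdot)-(\cdot)$ near $\gamma_N$ and a uniform-in-$n$ estimate $|\bar d(t)-t|\ge c(\delta)>0$ on $[\gamma_-/2,\gamma_+]\setminus(\gamma_N-\delta,\gamma_N+\delta)$ (from Items~\ref{item:nu}--\ref{item:0}: a fixed interval $[a,b]\subset(0,\infty)$ carries $\nu_n$-mass bounded away from $0$ and $\psi$ is strictly increasing there), this forces $\bar D_n\le\gamma_N+o(1)$ and $\bar d_n\ge\gamma_N-o(1)$, so with $\bar d_n\le\gamma_N\le\bar D_n$ we obtain $\max_i|d_i-\gamma_N|\asto 0$.

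The main obstacle is exactly the a priori control just invoked: $\bar D_n$ bounded and $\bar d_n$ bounded away from $0$ for all large $n$ a.s. Crude operator-norm bounds on $\hat{C}_N$ are self-referential — the natural bound on $\Vert\hat{C}_N\Vert$ runs through $1/\bar d_n$ and conversely — so one must argue via $g_{j^*}(t)=t\,q_{j^*}(t)$ with $q_{j^*}(t):=\frac1N z_{j^*}^*(\frac1n\sum_{i\neq j^*}\psi(\tau_i t)z_iz_i^*)^{-1}z_{j^*}$, which is non-increasing in $t$: the inequality $\bar D_n\le g_{j^*}(\bar D_n)$ reads $q_{j^*}(\bar D_n)\ge 1$, while, because the $\{z_i\}_{i\neq j^*}$ span $\CC^N$, $q_{j^*}(t)\to 1/(\psi^N_\infty(1-c_N))<1$ as $t\to\infty$ (recall $\psi^N_\infty=\phi_\infty/(1-c_N\phi_\infty)$ and $\phi_\infty>1$). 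Showing that this limit is approached uniformly enough in $n$ — so that $q_{j^*}(t)<1$, hence $\bar D_n$ is bounded, once $t$ exceeds a fixed constant — is where Assumption~\ref{ass:tau} (tail of $\nu_n$ against the flattening speed of $\phi$) enters, together with the trace and rank-one lemmas applied to $\frac1n\sum_{i\neq j^*}\psi(\tau_i t)z_iz_i^*$; the lower bound on $\bar d_n$ is obtained symmetrically, using $q_{i^*}(t)\to\infty$ as $t\to 0^+$.

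Finally, with $\max_i|d_i-\gamma_N|\asto 0$ (whence $d_i\ge\gamma_-/2$ for all $i$, all large $n$ a.s.), I would bound, uniformly in $i$, using $\tau_i v(\tau_i s)=\psi(\tau_i s)/s$,
\[
	\left|\tau_i v(\tau_i d_i)-\tau_i v(\tau_i\gamma_N)\right|\le\frac{2}{\gamma_-}\left|\psi(\tau_i d_i)-\psi(\tau_i\gamma_N)\right|+\frac{2\psi^N_\infty}{\gamma_-^2}\left|d_i-\gamma_N\right| ,
\]
the second term being $o(1)$; for the first, splitting at a large threshold $M_1$, for $\tau_i\le M_1$ use uniform continuity of $\psi$, and for $\tau_i>M_1$ note $\psi(\tau_i d_i)$ and $\psi(\tau_i\gamma_N)$ both lie in $[\psi(M_1\gamma_-/2),\psi^N_\infty]$, an interval shorter than any prescribed value for $M_1$ large. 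Hence $\max_i|\tau_i v(\tau_i d_i)-\tau_i v(\tau_i\gamma_N)|\to 0$, and
\begin{align*}
	\left\Vert\hat{C}_N-\hat{S}_N\right\Vert
	&=\left\Vert\frac1n\sum_{i=1}^n\left(\tau_i v(\tau_i d_i)-\tau_i v(\tau_i\gamma_N)\right)z_iz_i^*\right\Vert \\
	&\le 2\left(\max_i\left|\tau_i v(\tau_i d_i)-\tau_i v(\tau_i\gamma_N)\right|\right)\left\Vert\frac1n\sum_{i=1}^n z_iz_i^*\right\Vert\asto 0 ,
\end{align*}
since $\Vert\frac1n\sum_i z_iz_i^*\Vert$ is a.s. bounded under $c_N<1$ (Assumption~\ref{ass:cN}). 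Dropping the normalisation $C_N=I_N$ by applying the statement to $C_N^{-1/2}\hat{C}_N C_N^{-1/2}$, which solves \eqref{eq:def_hatCN} for the data $C_N^{-1/2}x_i$ with identity population and the same $\gamma_N$, and multiplying back using $\limsup_N\Vert C_N\Vert<\infty$ (Assumption~\ref{ass:xi}--\ref{item:CN}), completes the proof.
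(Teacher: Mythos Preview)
Your route is genuinely different from the paper's. The paper introduces $e_i=v(\tau_i d_i)/v(\tau_i\gamma_N)$ and runs a contradiction on $e_{\max}$; because the maximising index may carry an arbitrarily large $\tau_i$, the ratio $\psi(\tau_i\gamma_N)/\psi((1+\ell)^{-1}\tau_i\gamma_N)$ can collapse to $1$, and Assumption~\ref{ass:tau} is invoked precisely to control that collapse against the tail mass $\nu_n((M_\eta,\infty))$. You instead work directly with $d_i$: the exact trace identity $1=\frac1n\sum_i\psi(\tau_i d_i)/(1+c_N\psi(\tau_i d_i))$, giving the free sandwich $\bar d_n\le\gamma_N\le\bar D_n$, is a device the paper does not use; and your squeeze via $q_{j^*}(\bar D_n)\ge 1$ with $q_j(t)=\frac1Nz_j^*(\frac1n\sum_{i\neq j}\psi(\tau_i t)z_iz_i^*)^{-1}z_j$ non-increasing in $t$ makes no reference to $\tau_{j^*}$. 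Carried out carefully, this argument should not need Assumption~\ref{ass:tau} at all. In particular, the a~priori bound on $\bar D_n,\bar d_n$ is exactly the paper's Lemma~\ref{le:di_bounded}, whose proof uses only Assumptions~\ref{ass:cN}--\ref{ass:xi}; your claim that Assumption~\ref{ass:tau} enters at that step is a misattribution.

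There is one genuine gap. Your passage ``monotone pointwise a.s.\ convergence is uniform on compacts'' invokes a P\'olya-type principle, but both $g_i(t)$ and the target $\bar d(t)$ depend on $n$, so $|G_n-\bar d|\to 0$ is not of the form $f_n\to f$ and the standard statement does not apply; evaluating $g_{j^*}$ at the random point $\bar D_n$ is therefore not justified as written. The clean fix is to avoid uniformity in $t$ altogether. For fixed $\delta>0$, apply the trace and rank-one perturbation lemmas at the single ($\{\tau_i\}$-measurable, hence $z_j$-independent) point $t_\delta=\gamma_N+\delta$, exactly as in Lemma~\ref{le:convergence_gamma}, to get $\max_j|q_j(t_\delta)-\bar q_n(t_\delta)|\asto 0$, where $\bar q_n(t)=m_n(t)/t$ is the deterministic equivalent; your uniform-gap argument (from Assumption~\ref{ass:xi}--\ref{item:0}) gives $\bar q_n(t_\delta)\le 1-c(\delta)$ for all large $n$, hence $q_{j^*}(t_\delta)<1$, and since $q_{j^*}$ is non-increasing while $q_{j^*}(\bar D_n)\ge 1$, one concludes $\bar D_n<\gamma_N+\delta$. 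The lower bound is symmetric, and the remainder of your proof goes through unchanged.
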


The fact that $\hat{C}_N$ is well approximated by $\hat{S}_N$, which follows a random matrix model studied extensively in \citep{PAU09,HAC13}, has important consequences. From a purely mathematical standpoint, this provides a full characterization of the spectral behavior of $\hat{C}_N$ for large $N,n$ (see in particular Corollary~\ref{cor:spectrum} below). For application purposes, this first enables the performance analysis in the large $N,n$ horizon of standard signal processing methods already relying on $\hat{C}_N$ (these methods were so far analyzed solely in the fixed $N$ large $n$ regime). A second, more important, consequence for signal processing application is the possibility to fully exploit the structure of $\hat{C}_N$ for large $N,n$ to improve existing robust schemes. Deriving such improved methods is not the subject of the current article but should be directly accessible from Theorem~\ref{th:main}, while performance analysis of these methods may demand supplementary treatment, such as central limit theorems for functionals of $\hat{C}_N$.

\begin{corollary}[Spectrum]
	\label{cor:spectrum}
	Let Assumptions~\ref{ass:cN}--\ref{ass:tau} hold. Then
	\begin{align}
		\label{eq:spectrum_conv}
		\frac1n \sum_{i=1}^n {\bm \delta}_{\lambda_i(\hat{C}_N)} - \mu_N \asto 0
	\end{align}
	where the convergence is in the weak probability measure sense, with $\mu_N$ a probability measure with continuous density and Stieltjes transform $m_N(z)$ given, for $z\in\CC^+$, by 
\begin{align*}
	m_N(z) = -\frac1{z}\frac1N \sum_{i=1}^N \frac{1}{1+\tilde{\delta}_N(z) \lambda_i(C_N) }
\end{align*}
where $\tilde{\delta}_N(z)$ is the unique solution in $\CC^+$ of the equations in $\tilde{\delta}$
\begin{align*}
	\tilde{\delta} &= -\frac1z\frac1n \sum_{i=1}^n \frac{\psi(\tau_i\gamma_N) }{\gamma_N + \psi(\tau_i\gamma_N) \delta} \\
	\delta &= -\frac1z \frac1n \sum_{i=1}^N \frac{\lambda_i(C_N)}{1+ \lambda_i(C_N) \tilde{\delta}} 
\end{align*}
and where $\gamma_N$ is defined in Theorem~\ref{th:main}. Besides, the support $\mathcal S_N$ of $\mu_N$ is uniformly bounded. 
If $C_N=I_N$, $m_N(z)$ is the unique solution in $\CC^+$ of the equation in $m$
\begin{align*}
	m = \left( -z + \gamma_N^{-1} \frac1n \sum_{i=1}^n \frac{\psi(\tau_i\gamma_N)}{1+c\gamma_N^{-1}\psi(\tau_i\gamma_N)m} \right)^{-1}.
\end{align*}

Also, for each $N_0\in \NN$ and each closed set $\mathcal A\subset \RR$ with $\mathcal A\cap \left(\bigcup_{N\geq N_0} \mathcal S_N\right)=\emptyset$,
	\begin{align}
		\label{eq:eigwise_in_support}
		\left| \left\{\lambda_i(\hat{C}_N)\right\}_{i=1}^N \cap \mathcal A \right| \asto 0
	\end{align}
	so that, in particular, 
	\begin{align}
		\label{eq:bounded_eigs}
		\limsup_N \Vert \hat{C}_N\Vert <\infty.
	\end{align}
\end{corollary}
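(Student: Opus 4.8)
The plan is to derive the whole corollary from Theorem~\ref{th:main}, reducing every statement about $\hat C_N$ to the corresponding — and already available — statement about $\hat S_N$, which is a sample covariance matrix with a separable correlation structure of the type treated in \citep{ZHA09,PAU09,HAC13}. Writing $x_ix_i^*=\tau_iz_iz_i^*$ with $z_i=A_Ny_i$, we have $\hat S_N=\frac1n\sum_{i=1}^n \tau_i v(\tau_i\gamma_N)z_iz_i^*=\frac1n A_NY_ND_nY_N^*A_N^*$, where $Y_N=[y_1,\dots,y_n]$ and $D_n=\diag(\tau_iv(\tau_i\gamma_N))_{i=1}^n=\diag(\psi(\tau_i\gamma_N)/\gamma_N)_{i=1}^n$ is $\sigma(\nu_n)$-measurable. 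Since $\psi\le\psi^N_\infty$ and $\gamma_N$ is bounded away from $0$ for all large $n$ a.s. (this is part of the analysis of Section~\ref{sec:main_results}, resting ultimately on Items~\ref{item:nu} and~\ref{item:0} of Assumption~\ref{ass:xi}), $\|D_n\|$ is uniformly bounded a.s.; together with $\limsup_N\|C_N\|<\infty$ this places $\hat S_N$ in the bounded-norm separable model, the only non-standard feature being the unit-sphere normalization of the $y_i$, which is removed as in Item~\ref{item:y} of Assumption~\ref{ass:xi} by writing $y_i=\sqrt{\bar N}\,\tilde y_i/\|\tilde y_i\|$ and using $\|\tilde y_i\|^2/\bar N\to1$.

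Conditionally on $\{\tau_i\}$, the deterministic-equivalent theorem for this model gives, for each $z\in\CC^+$, $\frac1N\tr(\hat S_N-zI_N)^{-1}-m_N(z)\asto0$, with $m_N$ and the coupled system $(\delta_N,\tilde\delta_N)$ exactly as displayed: the stated form is obtained by inserting the eigenvalues $\tau_iv(\tau_i\gamma_N)=\psi(\tau_i\gamma_N)/\gamma_N$ of $D_n$ into the usual Silverstein fixed-point equations and clearing the factor $\gamma_N$ from the denominators; uniqueness of $\tilde\delta_N(z)$ in $\CC^+$ is the standard Silverstein argument. We transfer this to $\hat C_N$ through the resolvent identity $(\hat C_N-zI_N)^{-1}-(\hat S_N-zI_N)^{-1}=(\hat C_N-zI_N)^{-1}(\hat S_N-\hat C_N)(\hat S_N-zI_N)^{-1}$, whence $|\frac1N\tr(\hat C_N-zI_N)^{-1}-\frac1N\tr(\hat S_N-zI_N)^{-1}|\le\|\hat C_N-\hat S_N\|/(\Im z)^2\asto0$ by Theorem~\ref{th:main}. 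Hence $\frac1N\tr(\hat C_N-zI_N)^{-1}-m_N(z)\asto0$ for each $z\in\CC^+$, and since the $\mu_N$ are tight (their supports being uniformly bounded, see below) a standard Montel/Vitali argument over a countable dense subset of $\CC^+$ upgrades pointwise convergence of Stieltjes transforms to \eqref{eq:spectrum_conv}. The scalar equation in the case $C_N=I_N$ is pure algebra: with $\lambda_i(C_N)=1$ one gets $\delta=c_Nm$ and $m^{-1}=-z-z\tilde\delta$, which combined with the equation for $\tilde\delta$ yields the displayed equation for $m$.

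Uniform boundedness of the supports $\mathcal S_N$ follows from the explicit bound on the right edge of the deterministic-equivalent spectral measure of the separable model in terms of $\|C_N\|$, $\|D_n\|$, $c_N$ and $\bar c_N$, all uniformly bounded. For \eqref{eq:eigwise_in_support}, fix a closed $\mathcal A$ with $\mathcal A\cap(\bigcup_{N\ge N_0}\mathcal S_N)=\emptyset$; using uniform boundedness of $\mathcal S_N$ we may replace $\mathcal A$ by its intersection with a large compact interval, so that $\mathrm{dist}(\mathcal A,\mathcal S_N)\ge\varepsilon>0$ for all $N\ge N_0$. The ``no eigenvalues outside the support'' theorem for $\hat S_N$ (the separable-model analogue of the Bai--Silverstein result, available in the cited works) then gives, a.s.\ for all large $N$, no eigenvalue of $\hat S_N$ within $\varepsilon/2$ of $\mathcal A$; combined with Weyl's inequality $|\lambda_i(\hat C_N)-\lambda_i(\hat S_N)|\le\|\hat C_N-\hat S_N\|\asto0$ this shows that, a.s.\ for all large $N$, every eigenvalue of $\hat C_N$ lies outside $\mathcal A$, i.e.\ \eqref{eq:eigwise_in_support}. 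Choosing $\mathcal A=[K+\delta,\infty)$ with $K\triangleq\sup_N\max\mathcal S_N$ gives $\limsup_N\|\hat S_N\|\le K$ and hence $\limsup_N\|\hat C_N\|\le K<\infty$, which is \eqref{eq:bounded_eigs}.

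The main obstacle is to invoke the separable-model theory in precisely the form needed — in particular the ``no eigenvalues outside the support'' / exact-separation statement for the two-sidedly correlated Marchenko--Pastur model — and to verify that the unit-sphere normalization of the $y_i$ affects none of these conclusions; this last point requires the same concentration estimate for $\|\tilde y_i\|$ and resolvent-perturbation bound already used in the proofs of Theorems~\ref{th:uniqueness} and~\ref{th:main}. The reparametrization turning the standard $(\delta,\tilde\delta)$ system into the $\gamma_N$-form of the statement is routine but must be carried out carefully to track the powers of $\gamma_N$.
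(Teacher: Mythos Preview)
Your proposal is correct and follows essentially the same route as the paper: both derive all statements about $\hat C_N$ by transferring the known separable--model results of \citep{ZHA09,PAU09,HAC13} for $\hat S_N$ through Theorem~\ref{th:main}, with boundedness of the support coming from the boundedness of $\psi$ and of $\gamma_N$. Your write-up is simply more explicit about the transfer mechanisms (the resolvent identity for the Stieltjes transforms, Weyl's inequality for the eigenvalue localization, and the reduction of the unit-sphere $y_i$ to Gaussian $\tilde y_i$), which the paper's proof leaves implicit in its citations.
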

\begin{proof}
	Equation~\eqref{eq:spectrum_conv} is obtained from the results of \citep{ZHA09} with notations similar to \citep{HAC13}. The characterization of $\mu_N$ follows from \citep{HAC13}, where more information can be found.
	The uniform boundedness of the support is a consequence of the boundedness of $\psi$ and $\gamma_N$, Lemma~\ref{le:gamma_N} in Section~\ref{sec:proofs}. Finally, the results \eqref{eq:eigwise_in_support} and \eqref{eq:bounded_eigs} are an application of \citep{PAU09} along with $\limsup_N \Vert \hat{S}_N\Vert\leq v(0) \limsup_N\Vert C_N\Vert \limsup_N \Vert \frac1n\sum_{i=1}^nx_ix_i^*\Vert<\infty$ by Assumption~\ref{ass:xi}--\ref{item:CN} and \citep{SIL98}.
\end{proof}

A consequence of Theorem~\ref{th:main} and Corollary~\ref{cor:spectrum} in the i.i.d. elliptical case is as follows.
\begin{corollary}[Elliptical case]
	Let Assumptions~\ref{ass:cN}--\ref{ass:tau} hold and in addition, let $\tau_i$ be i.i.d. with law $\nu$ and let $c_N\to c$. Then
	\begin{align*}
		\left\Vert \hat{C}_N - \frac1n\sum_{i=1}^n v(\tau_i \gamma^\infty) x_ix_i^* \right\Vert \asto 0
	\end{align*}
	where $\gamma^\infty$ is the unique positive solution to the equation in $\gamma$
	\begin{align*} 
		1 = \int \frac{\psi_c(t\gamma)}{1+c\psi_c(t\gamma)} \nu(dt)
	\end{align*}
	with $\psi_c=\lim_{c_N\to c}\psi$.
	Moreover, if $\frac1n\sum_{i=1}^n{\bm\delta}_{\lambda_i(C_N)}\to \nu^C$ weakly, then 
	\begin{align*}	
		\frac1n\sum_{i=1}^n{\bm\delta}_{\lambda_i(\hat{C}_N)}\asto \mu
	\end{align*}	
	weakly with $\mu$ a probability measure with continuous density of bounded support $\mathcal S$, the Stieltjes transform $m(z)$ of which is given for $z\in\CC^+$ by
	\begin{align*}
		m(z) = -\frac1z \int \frac1{1+\tilde{\delta}(z)t}\nu^C(dt)
	 \end{align*}
	 where $\tilde{\delta}(z)$ is the unique solution in $\CC^+$ of the equations in $\tilde{\delta}$
	 \begin{align*}
		 \tilde{\delta} &= -\frac1z\int \frac{\psi_c(t\gamma^\infty)}{\gamma^\infty+\psi_c(t\gamma^\infty)\delta}\nu(dt) \\
		 \delta &= -\frac{c}z\int \frac{t}{1+t\tilde{\delta}} \nu^C(dt).
	 \end{align*}
	 Finally, for every closed set $\mathcal A\subset \RR$ with $\mathcal A\cap \mathcal S=\emptyset$,
	 \begin{align*}
		 \left| \left\{\lambda_i(\hat{C}_N)\right\}_{i=1}^N \cap \mathcal A \right| \asto 0.
	 \end{align*}
\end{corollary}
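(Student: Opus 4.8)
The plan is to deduce this corollary from Theorem~\ref{th:main} and Corollary~\ref{cor:spectrum} by letting $n\to\infty$: once the $\tau_i$ are i.i.d.\ of law $\nu$ and $c_N\to c$, every empirical average occurring there converges a.s.\ to an integral against $\nu$ or $\nu^C$, so the only genuinely new ingredient is the scalar convergence $\gamma_N\asto\gamma^\infty$. To obtain it, set $f_N(\gamma)\triangleq\frac1n\sum_{i=1}^n\frac{\psi(\tau_i\gamma)}{1+c_N\psi(\tau_i\gamma)}$ and $f(\gamma)\triangleq\int\frac{\psi_c(t\gamma)}{1+c\,\psi_c(t\gamma)}\,\nu(dt)$. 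Both vanish at $0$ (since $\psi(0)=\psi_c(0)=0$), both are increasing and strictly so for $f$ (because $\psi_c$ is strictly increasing and, by Assumption~\ref{ass:xi}--\ref{item:0}, $\nu$ puts mass on $(0,\infty)$), and both tend as $\gamma\to\infty$ to $\frac{\psi^N_\infty}{1+c_N\psi^N_\infty}$, resp.\ $\frac{\psi^\infty_\infty}{1+c\,\psi^\infty_\infty}$, which exceeds $1$ precisely because $\phi_\infty>1$; hence $\gamma_N$ and $\gamma^\infty$ are each well defined and unique (Lemma~\ref{le:gamma_N}). Since $1-c_N\phi$ stays bounded away from $0$, $g=g_{c_N}\to g_c$ uniformly on compacts, so $g^{-1}\to g_c^{-1}$ and $\psi\to\psi_c$ uniformly on compacts; as $\psi,\psi_c$ are bounded, monotone and continuous with $\psi^N_\infty\to\psi^\infty_\infty$, a short comparison of their flat tails upgrades this to uniform convergence on all of $[0,\infty)$. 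Together with $c_N\to c$ this gives $\sup_{x\ge0}\left|\frac{\psi(x)}{1+c_N\psi(x)}-\frac{\psi_c(x)}{1+c\,\psi_c(x)}\right|\to0$, so that the strong law of large numbers for the bounded functions $x\mapsto\frac{\psi_c(x\gamma)}{1+c\,\psi_c(x\gamma)}$ yields $f_N(\gamma)\asto f(\gamma)$ for each fixed $\gamma$; by continuity of $f$ and monotonicity the convergence is uniform on compacts, and strict monotonicity of $f$ through the level $1$ forces $f_N(\gamma^\infty-\delta)<1<f_N(\gamma^\infty+\delta)$ for all large $n$ a.s.\ and every $\delta>0$, i.e.\ $\gamma_N\asto\gamma^\infty$.

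With $\gamma_N\asto\gamma^\infty>0$ available, I would compare the $\hat S_N$ of Theorem~\ref{th:main} with the matrix $\hat S_N^\circ\triangleq\frac1n\sum_{i=1}^n v_c(\tau_i\gamma^\infty)x_ix_i^*$ appearing in the present statement, where $v_c=u\circ g_c^{-1}$ and $\psi_c:x\mapsto xv_c(x)$ are the $c_N\to c$ limits of $v$ and $\psi$. Writing $x_i=\sqrt{\tau_i}z_i$ and using $xv(x)=\psi(x)$, one has $\hat S_N-\hat S_N^\circ=\frac1n\sum_{i=1}^n\left(\frac{\psi(\tau_i\gamma_N)}{\gamma_N}-\frac{\psi_c(\tau_i\gamma^\infty)}{\gamma^\infty}\right)z_iz_i^*$, whose spectral norm is at most $2\max_{1\le i\le n}\left|\frac{\psi(\tau_i\gamma_N)}{\gamma_N}-\frac{\psi_c(\tau_i\gamma^\infty)}{\gamma^\infty}\right|\cdot\left\|\frac1n\sum_{i=1}^n z_iz_i^*\right\|$ (splitting the real weights into positive and negative parts, each bounded by its maximum). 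Here $\left\|\frac1n\sum_i z_iz_i^*\right\|$ is a.s.\ bounded — being a sample covariance matrix with $c_N<1$ and bounded population covariance $C_N$, by \citep{SIL98} — while the maximum tends to $0$ a.s.\ because $\gamma_N\asto\gamma^\infty>0$, $\psi\to\psi_c$ uniformly on $[0,\infty)$, and the bounded monotone continuous function $\psi_c$ satisfies $\sup_{s\ge0}|\psi_c(\alpha s)-\psi_c(s)|\to0$ as $\alpha\to1$. Hence $\|\hat S_N-\hat S_N^\circ\|\asto0$, which combined with $\|\hat C_N-\hat S_N\|\asto0$ from Theorem~\ref{th:main} proves the first assertion; the defining equation for $\gamma^\infty$ is exactly $f(\gamma^\infty)=1$, read off as the limit of \eqref{eq:fp_gamma}.

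For the limiting spectrum I would feed the approximation $\|\hat C_N-\hat S_N^\circ\|\asto0$ into Corollary~\ref{cor:spectrum}. The $\tau_i$ being i.i.d., $\frac1n\sum{\bm\delta}_{\tau_i}\to\nu$ weakly a.s.; together with the hypotheses $\frac1n\sum{\bm\delta}_{\lambda_i(C_N)}\to\nu^C$ weakly and $c_N\to c$, and using that all integrands in the deterministic-equivalent system of Corollary~\ref{cor:spectrum} are bounded uniformly for $z$ in compact subsets of $\CC^+$ (since $\psi_c$ is bounded, $\gamma_N\to\gamma^\infty>0$, $\|C_N\|$ is bounded, and the imaginary parts of $\tilde\delta,\delta$ keep the denominators away from $0$), one may replace $\frac1n\sum_i(\cdot)$ by $\int(\cdot)\,\nu(dt)$ and the average over the eigenvalues of $C_N$ by integration against $\nu^C$; being limits of analytic functions with a common normalization at infinity, the resulting objects identify $m(z)$, $\tilde\delta(z)$, $\delta(z)$ as in the statement. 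The continuity of the density of $\mu$, the boundedness of its support $\mathcal S$, and the absence of eigenvalues of $\hat S_N^\circ$ outside neighborhoods of $\mathcal S$ follow from \citep{SIL95,ZHA09,PAU09,HAC13} exactly as in the proof of Corollary~\ref{cor:spectrum}, and these last two facts transfer to $\hat C_N$ by Weyl's inequality because $\|\hat C_N-\hat S_N^\circ\|\asto0$.

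The main obstacle is the uniform-in-$i$ estimate $\max_{1\le i\le n}\left|\frac{\psi(\tau_i\gamma_N)}{\gamma_N}-\frac{\psi_c(\tau_i\gamma^\infty)}{\gamma^\infty}\right|\asto0$ together with its prerequisite $\sup_{x\ge0}|\psi(x)-\psi_c(x)|\to0$: the $\tau_i$ need not be bounded, so mere continuity of $v$ is useless, and the fix is to phrase everything through the \emph{bounded} functions $\psi,\psi_c$, to promote their convergence on compacts to uniform convergence on $[0,\infty)$ using boundedness and $\psi^N_\infty\to\psi^\infty_\infty$, and to exploit that a bounded monotone continuous function is insensitive to small multiplicative perturbations of its argument. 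Everything downstream — the passage from sums to integrals in the two fixed-point systems and the transfer of the support and eigenvalue-exclusion statements to $\hat C_N$ — is routine once $\gamma_N\asto\gamma^\infty$ and $\|\hat C_N-\hat S_N^\circ\|\asto0$ are in hand.
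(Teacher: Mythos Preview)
Your proposal is correct and follows essentially the same route as the paper, whose proof is extremely terse (``$\gamma_N\asto\gamma^\infty$ follows from $\psi/(1+c_N\psi)$ being monotonous and $\gamma_N$ uniformly bounded, Lemma~\ref{le:gamma_N}; the rest unfolds from classical random matrix techniques''). You have simply fleshed out the two steps the paper leaves implicit---the scalar convergence $\gamma_N\asto\gamma^\infty$ via monotonicity plus uniform convergence of $\psi\to\psi_c$ and the SLLN, and the passage from $\hat S_N$ to $\hat S_N^\circ$ via the uniform-in-$i$ control of $\psi(\tau_i\gamma_N)/\gamma_N-\psi_c(\tau_i\gamma^\infty)/\gamma^\infty$---and your identification of the latter as the main obstacle (requiring the bounded monotone structure of $\psi_c$ rather than mere continuity of $v$) is exactly the point.
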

\begin{proof}
	We use the fact that $\gamma_N\asto \gamma^\infty$ ($\gamma_N$ defined in Theorem~\ref{th:main}) which is a consequence of $\psi/(1+c_N\psi)$ being monotonous and $\gamma_N$ uniformly bounded, Lemma~\ref{le:gamma_N}. The rest unfolds from classical random matrix techniques.
\end{proof}

\begin{figure}[h!]
  \centering
  \begin{tikzpicture}[font=\footnotesize]
    \renewcommand{\axisdefaulttryminticks}{4} 
    \tikzstyle{every major grid}+=[style=densely dashed]       
    \tikzstyle{every axis y label}+=[yshift=-10pt] 
    \tikzstyle{every axis x label}+=[yshift=5pt]
    \tikzstyle{every axis legend}+=[cells={anchor=west},fill=white,
        at={(0.98,0.98)}, anchor=north east, font=\scriptsize ]
    \begin{axis}[
      xmin=0,
      ymin=0,
      xmax=2.3,
      ymax=3.5,
      bar width=1.5pt,
      grid=major,
      ymajorgrids=false,
      scaled ticks=true,
      xlabel={Eigenvalues},
      ylabel={Density}
      ]
      \addplot+[ybar,mark=none,color=black,fill=gray!40!white] coordinates{
	      (0.01,0.)(0.03,0.)(0.05,0.5)(0.07,3.1)(0.09,3.4)(0.11,2.9)(0.13,1.9)(0.15,0.7)(0.17,0.)(0.19,0.3)(0.21,0.8)(0.23,1.)(0.25,1.1)(0.27,1.2)(0.29,1.)(0.31,1.1)(0.33,1.2)(0.35,0.9)(0.37,0.9)(0.39,0.7)(0.41,0.9)(0.43,0.5)(0.45,0.4)(0.47,0.4)(0.49,0.1)(0.51,0.)(0.53,0.)(0.55,0.)(0.57,0.)(0.59,0.)(0.61,0.2)(0.63,0.3)(0.65,0.3)(0.67,0.2)(0.69,0.5)(0.71,0.4)(0.73,0.4)(0.75,0.4)(0.77,0.3)(0.79,0.6)(0.81,0.4)(0.83,0.4)(0.85,0.4)(0.87,0.5)(0.89,0.5)(0.91,0.4)(0.93,0.4)(0.95,0.5)(0.97,0.4)(0.99,0.5)(1.01,0.4)(1.03,0.4)(1.05,0.5)(1.07,0.4)(1.09,0.4)(1.11,0.4)(1.13,0.4)(1.15,0.4)(1.17,0.5)(1.19,0.4)(1.21,0.3)(1.23,0.4)(1.25,0.4)(1.27,0.4)(1.29,0.4)(1.31,0.4)(1.33,0.3)(1.35,0.4)(1.37,0.4)(1.39,0.3)(1.41,0.5)(1.43,0.3)(1.45,0.3)(1.47,0.3)(1.49,0.3)(1.51,0.3)(1.53,0.3)(1.55,0.4)(1.57,0.3)(1.59,0.4)(1.61,0.3)(1.63,0.2)(1.65,0.3)(1.67,0.2)(1.69,0.3)(1.71,0.2)(1.73,0.3)(1.75,0.2)(1.77,0.3)(1.79,0.3)(1.81,0.1)(1.83,0.2)(1.85,0.3)(1.87,0.2)(1.89,0.2)(1.91,0.2)(1.93,0.2)(1.95,0.2)(1.97,0.1)(1.99,0.2)(2.01,0.3)(2.03,0.1)(2.05,0.1)(2.07,0.1)(2.09,0.1)(2.11,0.)(2.13,0.3)(2.15,0.)(2.17,0.1)(2.19,0.)(2.21,0.1)(2.23,0.)
      };
      \addplot[black,line width=1pt] plot coordinates{
(0.038,0.004341)(0.04,0.004936)(0.042,0.005697)(0.044,0.006703)(0.046,0.008128)(0.048,0.010218)(0.05,0.013754)(0.052,0.021054)(0.054,0.050016)(0.056,1.334095)(0.058,1.966732)(0.06,2.377708)(0.062,2.648892)(0.064,2.874291)(0.066,3.035344)(0.068,3.153864)(0.07,3.242067)(0.072,3.315482)(0.074,3.373319)(0.076,3.404909)(0.078,3.422671)(0.08,3.430764)(0.082,3.424367)(0.084,3.416687)(0.086,3.399365)(0.088,3.37194)(0.09,3.344324)(0.092,3.310287)(0.094,3.266717)(0.096,3.225626)(0.098,3.18039)(0.1,3.130975)(0.102,3.077707)(0.104,3.01793)(0.106,2.960965)(0.108,2.901403)(0.11,2.838894)(0.112,2.773256)(0.114,2.704535)(0.116,2.632925)(0.118,2.558654)(0.12,2.481919)(0.122,2.402873)(0.124,2.321645)(0.126,2.238363)(0.128,2.153477)(0.13,2.062702)(0.132,1.969039)(0.134,1.872718)(0.136,1.768689)(0.138,1.661782)(0.14,1.545184)(0.142,1.420048)(0.144,1.284727)(0.146,1.133129)(0.148,0.959396)(0.15,0.749067)(0.152,0.454882)(0.154,0.054202)(0.156,0.012415)(0.158,0.008188)(0.16,0.006507)(0.162,0.005502)(0.164,0.004844)(0.166,0.004389)(0.168,0.004066)(0.17,0.003838)(0.172,0.003684)(0.174,0.003594)(0.176,0.003567)(0.178,0.003606)(0.18,0.003733)(0.182,0.003967)(0.184,0.004376)(0.186,0.005108)(0.188,0.006904)(0.19,0.073155)(0.192,0.312135)(0.194,0.485641)(0.196,0.518969)(0.198,0.589499)
(0.2,0.659299)(0.21,0.864995)(0.22,0.964776)(0.23,1.041109)(0.24,1.083337)(0.25,1.104571)(0.26,1.12575)(0.27,1.119406)(0.28,1.115421)(0.29,1.109176)(0.3,1.087731)(0.31,1.071227)(0.32,1.047058)(0.33,1.020041)(0.34,0.990463)(0.35,0.958563)(0.36,0.924495)(0.37,0.888339)(0.38,0.850104)(0.39,0.809726)(0.4,0.767057)(0.41,0.721852)(0.42,0.673731)(0.43,0.622129)(0.44,0.56618)(0.45,0.504517)(0.46,0.434808)(0.47,0.352583)(0.48,0.249154)(0.49,0.071485)(0.5,0.001538)(0.51,0.000808)(0.52,0.000631)(0.53,0.000541)(0.54,0.000491)(0.55,0.000468)(0.56,0.000465)(0.57,0.000487)(0.58,0.00055)(0.59,0.000708)(0.6,0.001715)(0.61,0.14419)(0.62,0.195779)(0.63,0.238064)(0.64,0.265859)(0.65,0.289627)(0.66,0.332697)(0.67,0.343112)(0.68,0.349127)(0.69,0.370253)(0.7,0.375966)(0.71,0.384661)(0.72,0.396067)(0.73,0.403184)(0.74,0.40885)(0.75,0.414243)(0.76,0.419252)(0.77,0.423685)(0.78,0.427407)(0.79,0.43052)(0.8,0.433433)(0.81,0.436074)(0.82,0.438108)(0.83,0.439965)(0.84,0.441422)(0.85,0.442664)(0.86,0.443591)(0.87,0.444308)(0.88,0.444804)(0.89,0.445085)(0.9,0.445176)(0.91,0.445094)(0.92,0.444849)(0.93,0.444452)(0.94,0.443913)(0.95,0.443242)(0.96,0.442448)(0.97,0.441538)(0.98,0.440521)(0.99,0.439403)(1.,0.438191)(1.01,0.436891)(1.02,0.435508)(1.03,0.434047)(1.04,0.432513)(1.05,0.430911)(1.06,0.429244)(1.07,0.427516)(1.08,0.425731)(1.09,0.423891)(1.1,0.422002)(1.11,0.420064)(1.12,0.41808)(1.13,0.416054)(1.14,0.413988)(1.15,0.411883)(1.16,0.409742)(1.17,0.407566)(1.18,0.405359)(1.19,0.40312)(1.2,0.400852)(1.21,0.398556)(1.22,0.396233)(1.23,0.393886)(1.24,0.391514)(1.25,0.38912)(1.26,0.386703)(1.27,0.384266)(1.28,0.381809)(1.29,0.379333)(1.3,0.376839)(1.31,0.374327)(1.32,0.371798)(1.33,0.369253)(1.34,0.366693)(1.35,0.364117)(1.36,0.361527)(1.37,0.358923)(1.38,0.356305)(1.39,0.353673)(1.4,0.351029)(1.41,0.348372)(1.42,0.345703)(1.43,0.343022)(1.44,0.340329)(1.45,0.337624)(1.46,0.334908)(1.47,0.33218)(1.48,0.329441)(1.49,0.326691)(1.5,0.32393)(1.51,0.321158)(1.52,0.318374)(1.53,0.31558)(1.54,0.312775)(1.55,0.309958)(1.56,0.30713)(1.57,0.304291)(1.58,0.30144)(1.59,0.298577)(1.6,0.295703)(1.61,0.292816)(1.62,0.289917)(1.63,0.287006)(1.64,0.284082)(1.65,0.281144)(1.66,0.278193)(1.67,0.275228)(1.68,0.272249)(1.69,0.269255)(1.7,0.266246)(1.71,0.263222)(1.72,0.260181)(1.73,0.257123)(1.74,0.254048)(1.75,0.250955)(1.76,0.247843)(1.77,0.244711)(1.78,0.24156)(1.79,0.238387)(1.8,0.235192)(1.81,0.231974)(1.82,0.228732)(1.83,0.225465)(1.84,0.222172)(1.85,0.218851)(1.86,0.215501)(1.87,0.212121)(1.88,0.208708)(1.89,0.205262)(1.9,0.201781)(1.91,0.198262)(1.92,0.194703)(1.93,0.191102)(1.94,0.187457)(1.95,0.183764)(1.96,0.180021)(1.97,0.176224)(1.98,0.172369)(1.99,0.168453)(2.,0.164471)(2.01,0.160417)(2.02,0.156287)(2.03,0.152073)(2.04,0.147768)(2.05,0.143364)(2.06,0.138852)(2.07,0.134219)(2.08,0.129453)(2.09,0.124539)(2.1,0.119458)(2.11,0.114187)(2.12,0.108699)(2.13,0.102959)(2.14,0.096921)(2.15,0.090527)(2.16,0.083696)(2.17,0.076312)(2.18,0.0682)(2.19,0.059079)(2.2,0.048449)(2.21,0.035377)(2.22,0.018759)(2.23,0.003855)
      };
      \legend{ {Empirical eigenvalue distribution},{Density of $\mu_N$} }
    \end{axis}
  \end{tikzpicture}
  \caption{Histogram of the eigenvalues of $\hat{C}_N$ for $n=2500$, $N=500$, $C_N=\diag(I_{125},3I_{125},10I_{250})$, $\tau_1$ with $\Gamma(.5,2)$-distribution.}
  \label{fig:hist_hatC}
\end{figure}

\begin{figure}[h!]
  \centering
  \begin{tikzpicture}[font=\footnotesize]
    \renewcommand{\axisdefaulttryminticks}{4} 
    \tikzstyle{every major grid}+=[style=densely dashed]       
    \tikzstyle{every axis y label}+=[yshift=-10pt] 
    \tikzstyle{every axis x label}+=[yshift=5pt]
    \tikzstyle{every axis legend}+=[cells={anchor=west},fill=white,
        at={(0.98,0.98)}, anchor=north east, font=\scriptsize ]
    \begin{axis}[
      xmin=0,
      ymin=0,
      xmax=2.3,
      ymax=3.5,
      bar width=1.5pt,
      grid=major,
      ymajorgrids=false,
      scaled ticks=true,
      xlabel={Eigenvalues},
      ylabel={Density}
      ]
      \addplot+[ybar,mark=none,color=black,fill=gray!40!white] coordinates{
	      (0.01,0.)(0.03,0.)(0.05,0.5)(0.07,3.1)(0.09,3.4)(0.11,2.8)(0.13,2.)(0.15,0.7)(0.17,0.)(0.19,0.3)(0.21,0.8)(0.23,1.)(0.25,1.1)(0.27,1.1)(0.29,1.1)(0.31,1.1)(0.33,1.2)(0.35,0.8)(0.37,0.9)(0.39,0.8)(0.41,0.8)(0.43,0.6)(0.45,0.4)(0.47,0.4)(0.49,0.1)(0.51,0.)(0.53,0.)(0.55,0.)(0.57,0.)(0.59,0.)(0.61,0.2)(0.63,0.2)(0.65,0.4)(0.67,0.2)(0.69,0.4)(0.71,0.4)(0.73,0.4)(0.75,0.5)(0.77,0.3)(0.79,0.5)(0.81,0.5)(0.83,0.3)(0.85,0.4)(0.87,0.5)(0.89,0.5)(0.91,0.4)(0.93,0.5)(0.95,0.5)(0.97,0.4)(0.99,0.4)(1.01,0.4)(1.03,0.4)(1.05,0.5)(1.07,0.5)(1.09,0.4)(1.11,0.3)(1.13,0.5)(1.15,0.3)(1.17,0.5)(1.19,0.4)(1.21,0.4)(1.23,0.4)(1.25,0.4)(1.27,0.3)(1.29,0.4)(1.31,0.4)(1.33,0.4)(1.35,0.3)(1.37,0.4)(1.39,0.4)(1.41,0.4)(1.43,0.3)(1.45,0.4)(1.47,0.3)(1.49,0.3)(1.51,0.3)(1.53,0.3)(1.55,0.3)(1.57,0.4)(1.59,0.3)(1.61,0.3)(1.63,0.3)(1.65,0.2)(1.67,0.3)(1.69,0.2)(1.71,0.3)(1.73,0.2)(1.75,0.2)(1.77,0.4)(1.79,0.2)(1.81,0.2)(1.83,0.2)(1.85,0.3)(1.87,0.2)(1.89,0.2)(1.91,0.2)(1.93,0.2)(1.95,0.2)(1.97,0.1)(1.99,0.2)(2.01,0.3)(2.03,0.1)(2.05,0.1)(2.07,0.1)(2.09,0.1)(2.11,0.)(2.13,0.2)(2.15,0.1)(2.17,0.1)(2.19,0.)(2.21,0.)(2.23,0.1)(2.25,0.)
      };
      \addplot[black,line width=1pt] plot coordinates{
(0.038,0.004341)(0.04,0.004936)(0.042,0.005697)(0.044,0.006703)(0.046,0.008128)(0.048,0.010218)(0.05,0.013754)(0.052,0.021054)(0.054,0.050016)(0.056,1.334095)(0.058,1.966732)(0.06,2.377708)(0.062,2.648892)(0.064,2.874291)(0.066,3.035344)(0.068,3.153864)(0.07,3.242067)(0.072,3.315482)(0.074,3.373319)(0.076,3.404909)(0.078,3.422671)(0.08,3.430764)(0.082,3.424367)(0.084,3.416687)(0.086,3.399365)(0.088,3.37194)(0.09,3.344324)(0.092,3.310287)(0.094,3.266717)(0.096,3.225626)(0.098,3.18039)(0.1,3.130975)(0.102,3.077707)(0.104,3.01793)(0.106,2.960965)(0.108,2.901403)(0.11,2.838894)(0.112,2.773256)(0.114,2.704535)(0.116,2.632925)(0.118,2.558654)(0.12,2.481919)(0.122,2.402873)(0.124,2.321645)(0.126,2.238363)(0.128,2.153477)(0.13,2.062702)(0.132,1.969039)(0.134,1.872718)(0.136,1.768689)(0.138,1.661782)(0.14,1.545184)(0.142,1.420048)(0.144,1.284727)(0.146,1.133129)(0.148,0.959396)(0.15,0.749067)(0.152,0.454882)(0.154,0.054202)(0.156,0.012415)(0.158,0.008188)(0.16,0.006507)(0.162,0.005502)(0.164,0.004844)(0.166,0.004389)(0.168,0.004066)(0.17,0.003838)(0.172,0.003684)(0.174,0.003594)(0.176,0.003567)(0.178,0.003606)(0.18,0.003733)(0.182,0.003967)(0.184,0.004376)(0.186,0.005108)(0.188,0.006904)(0.19,0.073155)(0.192,0.312135)(0.194,0.485641)(0.196,0.518969)(0.198,0.589499)
(0.2,0.659299)(0.21,0.864995)(0.22,0.964776)(0.23,1.041109)(0.24,1.083337)(0.25,1.104571)(0.26,1.12575)(0.27,1.119406)(0.28,1.115421)(0.29,1.109176)(0.3,1.087731)(0.31,1.071227)(0.32,1.047058)(0.33,1.020041)(0.34,0.990463)(0.35,0.958563)(0.36,0.924495)(0.37,0.888339)(0.38,0.850104)(0.39,0.809726)(0.4,0.767057)(0.41,0.721852)(0.42,0.673731)(0.43,0.622129)(0.44,0.56618)(0.45,0.504517)(0.46,0.434808)(0.47,0.352583)(0.48,0.249154)(0.49,0.071485)(0.5,0.001538)(0.51,0.000808)(0.52,0.000631)(0.53,0.000541)(0.54,0.000491)(0.55,0.000468)(0.56,0.000465)(0.57,0.000487)(0.58,0.00055)(0.59,0.000708)(0.6,0.001715)(0.61,0.14419)(0.62,0.195779)(0.63,0.238064)(0.64,0.265859)(0.65,0.289627)(0.66,0.332697)(0.67,0.343112)(0.68,0.349127)(0.69,0.370253)(0.7,0.375966)(0.71,0.384661)(0.72,0.396067)(0.73,0.403184)(0.74,0.40885)(0.75,0.414243)(0.76,0.419252)(0.77,0.423685)(0.78,0.427407)(0.79,0.43052)(0.8,0.433433)(0.81,0.436074)(0.82,0.438108)(0.83,0.439965)(0.84,0.441422)(0.85,0.442664)(0.86,0.443591)(0.87,0.444308)(0.88,0.444804)(0.89,0.445085)(0.9,0.445176)(0.91,0.445094)(0.92,0.444849)(0.93,0.444452)(0.94,0.443913)(0.95,0.443242)(0.96,0.442448)(0.97,0.441538)(0.98,0.440521)(0.99,0.439403)(1.,0.438191)(1.01,0.436891)(1.02,0.435508)(1.03,0.434047)(1.04,0.432513)(1.05,0.430911)(1.06,0.429244)(1.07,0.427516)(1.08,0.425731)(1.09,0.423891)(1.1,0.422002)(1.11,0.420064)(1.12,0.41808)(1.13,0.416054)(1.14,0.413988)(1.15,0.411883)(1.16,0.409742)(1.17,0.407566)(1.18,0.405359)(1.19,0.40312)(1.2,0.400852)(1.21,0.398556)(1.22,0.396233)(1.23,0.393886)(1.24,0.391514)(1.25,0.38912)(1.26,0.386703)(1.27,0.384266)(1.28,0.381809)(1.29,0.379333)(1.3,0.376839)(1.31,0.374327)(1.32,0.371798)(1.33,0.369253)(1.34,0.366693)(1.35,0.364117)(1.36,0.361527)(1.37,0.358923)(1.38,0.356305)(1.39,0.353673)(1.4,0.351029)(1.41,0.348372)(1.42,0.345703)(1.43,0.343022)(1.44,0.340329)(1.45,0.337624)(1.46,0.334908)(1.47,0.33218)(1.48,0.329441)(1.49,0.326691)(1.5,0.32393)(1.51,0.321158)(1.52,0.318374)(1.53,0.31558)(1.54,0.312775)(1.55,0.309958)(1.56,0.30713)(1.57,0.304291)(1.58,0.30144)(1.59,0.298577)(1.6,0.295703)(1.61,0.292816)(1.62,0.289917)(1.63,0.287006)(1.64,0.284082)(1.65,0.281144)(1.66,0.278193)(1.67,0.275228)(1.68,0.272249)(1.69,0.269255)(1.7,0.266246)(1.71,0.263222)(1.72,0.260181)(1.73,0.257123)(1.74,0.254048)(1.75,0.250955)(1.76,0.247843)(1.77,0.244711)(1.78,0.24156)(1.79,0.238387)(1.8,0.235192)(1.81,0.231974)(1.82,0.228732)(1.83,0.225465)(1.84,0.222172)(1.85,0.218851)(1.86,0.215501)(1.87,0.212121)(1.88,0.208708)(1.89,0.205262)(1.9,0.201781)(1.91,0.198262)(1.92,0.194703)(1.93,0.191102)(1.94,0.187457)(1.95,0.183764)(1.96,0.180021)(1.97,0.176224)(1.98,0.172369)(1.99,0.168453)(2.,0.164471)(2.01,0.160417)(2.02,0.156287)(2.03,0.152073)(2.04,0.147768)(2.05,0.143364)(2.06,0.138852)(2.07,0.134219)(2.08,0.129453)(2.09,0.124539)(2.1,0.119458)(2.11,0.114187)(2.12,0.108699)(2.13,0.102959)(2.14,0.096921)(2.15,0.090527)(2.16,0.083696)(2.17,0.076312)(2.18,0.0682)(2.19,0.059079)(2.2,0.048449)(2.21,0.035377)(2.22,0.018759)(2.23,0.003855)
      };
      \legend{ {Empirical eigenvalue distribution},{Density of $\mu_N$} }
    \end{axis}
  \end{tikzpicture}
  \caption{Histogram of the eigenvalues of $\hat{S}_N$ for $n=2500$, $N=500$, $C_N=\diag(I_{125},3I_{125},10I_{250})$, $\tau_1$ with $\Gamma(.5,2)$-distribution.}
  \label{fig:hist_hatS}
\end{figure}

\begin{figure}[h!]
  \centering
  \begin{tikzpicture}[font=\footnotesize]
    \renewcommand{\axisdefaulttryminticks}{4} 
    \tikzstyle{every major grid}+=[style=densely dashed]       
    \tikzstyle{every axis y label}+=[yshift=-10pt] 
    \tikzstyle{every axis x label}+=[yshift=5pt]
    \tikzstyle{every axis legend}+=[cells={anchor=west},fill=white,
        at={(0.98,0.98)}, anchor=north east, font=\scriptsize ]
    \begin{axis}[
      xmin=0,
      ymin=0,
      xmax=30,
      ymax=0.5,
      bar width=1pt,
      grid=major,
      ymajorgrids=false,
      scaled ticks=true,
      xlabel={Eigenvalues},
      ylabel={Density}
      ]
      \addplot+[ybar,mark=none,color=black,fill=gray!40!white] coordinates{
	      (0.1,0.)(0.3,0.27)(0.5,0.41)(0.7,0.31)(0.9,0.21)(1.1,0.11)(1.3,0.13)(1.5,0.13)(1.7,0.14)(1.9,0.12)(2.1,0.12)(2.3,0.1)(2.5,0.09)(2.7,0.09)(2.9,0.08)(3.1,0.07)(3.3,0.06)(3.5,0.04)(3.7,0.03)(3.9,0.04)(4.1,0.05)(4.3,0.05)(4.5,0.05)(4.7,0.05)(4.9,0.04)(5.1,0.07)(5.3,0.04)(5.5,0.04)(5.7,0.06)(5.9,0.05)(6.1,0.04)(6.3,0.06)(6.5,0.03)(6.7,0.06)(6.9,0.03)(7.1,0.05)(7.3,0.04)(7.5,0.04)(7.7,0.05)(7.9,0.04)(8.1,0.04)(8.3,0.02)(8.5,0.05)(8.7,0.04)(8.9,0.04)(9.1,0.02)(9.3,0.04)(9.5,0.04)(9.7,0.03)(9.9,0.02)(10.1,0.04)(10.3,0.03)(10.5,0.04)(10.7,0.03)(10.9,0.02)(11.1,0.03)(11.3,0.03)(11.5,0.02)(11.7,0.03)(11.9,0.03)(12.1,0.02)(12.3,0.02)(12.5,0.03)(12.7,0.02)(12.9,0.02)(13.1,0.03)(13.3,0.03)(13.5,0.01)(13.7,0.03)(13.9,0.02)(14.1,0.01)(14.3,0.03)(14.5,0.02)(14.7,0.03)(14.9,0.01)(15.1,0.01)(15.3,0.03)(15.5,0.01)(15.7,0.03)(15.9,0.01)(16.1,0.01)(16.3,0.02)(16.5,0.01)(16.7,0.02)(16.9,0.01)(17.1,0.01)(17.3,0.02)(17.5,0.01)(17.7,0.01)(17.9,0.01)(18.1,0.02)(18.3,0.02)(18.5,0.)(18.7,0.02)(18.9,0.01)(19.1,0.01)(19.3,0.)(19.5,0.01)(19.7,0.02)(19.9,0.)(20.1,0.01)(20.3,0.01)(20.5,0.02)(20.7,0.)(20.9,0.01)(21.1,0.01)(21.3,0.01)(21.5,0.01)(21.7,0.)(21.9,0.01)(22.1,0.01)(22.3,0.)(22.5,0.01)(22.7,0.01)(22.9,0.)(23.1,0.01)(23.3,0.01)(23.5,0.)(23.7,0.01)(23.9,0.)(24.1,0.)(24.3,0.)(24.5,0.01)(24.7,0.01)(24.9,0.)(25.1,0.01)(25.3,0.)(25.5,0.)(25.7,0.)(25.9,0.01)(26.1,0.)(26.3,0.)(26.5,0.)(26.7,0.)(26.9,0.)(27.1,0.)(27.3,0.)(27.5,0.01)(27.7,0.)(27.9,0.)(28.1,0.)(28.3,0.)(28.5,0.)(28.7,0.)(28.9,0.)(29.1,0.)(29.3,0.)(29.5,0.01)(29.7,0.)(29.9,0.)(30.1,0.)(30.3,0.)(30.5,0.)(30.7,0.)
      };
      \addplot[black,line width=1pt] plot coordinates{
	      (0.,0.000035)(0.01,0.000037)(0.02,0.000039)(0.03,0.000041)(0.04,0.000043)(0.05,0.000045)(0.06,0.000048)(0.07,0.000051)(0.08,0.000054)(0.09,0.000057)(0.1,0.000061)(0.11,0.000066)(0.12,0.000071)(0.13,0.000076)(0.14,0.000083)(0.15,0.000091)(0.16,0.0001)(0.17,0.00011)(0.18,0.000124)(0.19,0.00014)(0.2,0.000161)(0.21,0.000188)(0.22,0.000226)(0.23,0.000283)(0.24,0.000378)(0.25,0.000571)(0.26,0.001329)(0.27,0.17333)(0.28,0.272534)(0.29,0.327193)(0.3,0.365364)(0.31,0.390961)(0.32,0.411762)(0.33,0.427116)(0.34,0.438931)(0.35,0.448945)(0.36,0.454649)(0.37,0.458518)(0.38,0.460397)(0.39,0.461095)(0.4,0.460717)(0.41,0.460061)(0.42,0.457941)(0.43,0.455247)(0.44,0.452011)(0.45,0.44832)(0.46,0.444246)(0.47,0.439849)(0.48,0.435183)(0.49,0.430291)(0.5,0.425211)(0.51,0.419974)(0.52,0.41461)(0.53,0.409141)(0.54,0.403587)(0.55,0.398224)(0.56,0.392547)(0.57,0.386828)(0.58,0.381078)(0.59,0.375308)(0.6,0.369526)(0.61,0.363714)(0.62,0.357929)(0.63,0.352153)(0.64,0.346388)(0.65,0.34064)(0.66,0.334909)(0.67,0.329199)(0.68,0.323512)(0.69,0.317848)(0.7,0.312209)(0.71,0.306594)(0.72,0.301005)(0.73,0.29544)(0.74,0.2899)(0.75,0.284383)(0.76,0.278888)(0.77,0.273415)(0.78,0.267961)(0.79,0.262524)(0.8,0.257102)(0.81,0.251693)(0.82,0.246294)(0.83,0.240902)(0.84,0.235513)(0.85,0.230123)(0.86,0.224728)(0.87,0.219324)(0.88,0.213904)(0.89,0.208463)(0.9,0.202993)(0.91,0.197487)(0.92,0.191936)(0.93,0.186329)(0.94,0.180655)(0.95,0.174899)(0.96,0.169045)(0.97,0.163073)(0.98,0.156959)(0.99,0.150673)
	      (1.,0.144178)(1.1,0.062709)(1.2,0.121647)(1.3,0.135823)(1.4,0.140217)(1.5,0.140237)(1.6,0.137948)(1.7,0.134355)(1.8,0.130012)(1.9,0.125244)(2.,0.120249)(2.1,0.115151)(2.2,0.110028)(2.3,0.104929)(2.4,0.099881)(2.5,0.094898)(2.6,0.089986)(2.7,0.085141)(2.8,0.080352)(2.9,0.075603)(3.,0.070872)(3.1,0.066125)(3.2,0.061316)(3.3,0.056379)(3.4,0.051206)(3.5,0.04562)(3.6,0.039315)(3.7,0.032798)(3.8,0.036394)(3.9,0.040717)(4.,0.043609)(4.1,0.045661)(4.2,0.047161)(4.3,0.048269)(4.4,0.049084)(4.5,0.049673)(4.6,0.050083)(4.7,0.050349)(4.8,0.050499)(4.9,0.050552)(5.,0.050524)(5.1,0.05043)(5.2,0.050279)(5.3,0.05008)(5.4,0.049841)(5.5,0.049568)(5.6,0.049267)(5.7,0.04894)(5.8,0.048593)(5.9,0.048228)(6.,0.047849)(6.1,0.047458)(6.2,0.047056)(6.3,0.046645)(6.4,0.046228)(6.5,0.045806)(6.6,0.045379)(6.7,0.044949)(6.8,0.044517)(6.9,0.044083)(7.,0.043649)(7.1,0.043214)(7.2,0.04278)(7.3,0.042346)(7.4,0.041914)(7.5,0.041484)(7.6,0.041056)(7.7,0.04063)(7.8,0.040206)(7.9,0.039786)(8.,0.039368)(8.1,0.038953)(8.2,0.038542)(8.3,0.038134)(8.4,0.037729)(8.5,0.037328)(8.6,0.03693)(8.7,0.036537)(8.8,0.036146)(8.9,0.03576)(9.,0.035377)(9.1,0.034999)(9.2,0.034624)(9.3,0.034252)(9.4,0.033885)(9.5,0.033521)(9.6,0.033161)(9.7,0.032805)(9.8,0.032452)(9.9,0.032104)(10.,0.031759)(10.1,0.031417)(10.2,0.031079)(10.3,0.030745)(10.4,0.030415)(10.5,0.030087)(10.6,0.029764)(10.7,0.029444)(10.8,0.029127)(10.9,0.028813)(11.,0.028503)(11.1,0.028197)(11.2,0.027893)(11.3,0.027593)(11.4,0.027295)(11.5,0.027001)(11.6,0.02671)(11.7,0.026422)(11.8,0.026137)(11.9,0.025855)(12.,0.025576)(12.1,0.0253)(12.2,0.025026)(12.3,0.024756)(12.4,0.024488)(12.5,0.024223)(12.6,0.02396)(12.7,0.0237)(12.8,0.023443)(12.9,0.023188)(13.,0.022936)(13.1,0.022686)(13.2,0.022439)(13.3,0.022194)(13.4,0.021952)(13.5,0.021711)(13.6,0.021473)(13.7,0.021238)(13.8,0.021004)(13.9,0.020773)(14.,0.020544)(14.1,0.020317)(14.2,0.020092)(14.3,0.01987)(14.4,0.019649)(14.5,0.01943)(14.6,0.019213)(14.7,0.018998)(14.8,0.018786)(14.9,0.018575)(15.,0.018365)(15.1,0.018158)(15.2,0.017953)(15.3,0.017749)(15.4,0.017547)(15.5,0.017347)(15.6,0.017148)(15.7,0.016951)(15.8,0.016756)(15.9,0.016562)(16.,0.01637)(16.1,0.01618)(16.2,0.015991)(16.3,0.015803)(16.4,0.015618)(16.5,0.015433)(16.6,0.01525)(16.7,0.015069)(16.8,0.014889)(16.9,0.014711)(17.,0.014533)(17.1,0.014358)(17.2,0.014183)(17.3,0.01401)(17.4,0.013838)(17.5,0.013668)(17.6,0.013499)(17.7,0.013331)(17.8,0.013164)(17.9,0.012999)(18.,0.012834)(18.1,0.012671)(18.2,0.01251)(18.3,0.012349)(18.4,0.012189)(18.5,0.012031)(18.6,0.011874)(18.7,0.011718)(18.8,0.011563)(18.9,0.011409)(19.,0.011256)(19.1,0.011105)(19.2,0.010954)(19.3,0.010805)(19.4,0.010656)(19.5,0.010508)(19.6,0.010362)(19.7,0.010216)(19.8,0.010072)(19.9,0.009928)(20.,0.009786)(20.1,0.009644)(20.2,0.009503)(20.3,0.009364)(20.4,0.009225)(20.5,0.009087)(20.6,0.00895)(20.7,0.008813)(20.8,0.008678)(20.9,0.008544)(21.,0.00841)(21.1,0.008277)(21.2,0.008145)(21.3,0.008014)(21.4,0.007883)(21.5,0.007754)(21.6,0.007625)(21.7,0.007497)(21.8,0.007369)(21.9,0.007242)(22.,0.007116)(22.1,0.006991)(22.2,0.006866)(22.3,0.006742)(22.4,0.006618)(22.5,0.006495)(22.6,0.006373)(22.7,0.006251)(22.8,0.006129)(22.9,0.006008)(23.,0.005888)(23.1,0.005768)(23.2,0.005649)(23.3,0.00553)(23.4,0.005411)(23.5,0.005292)(23.6,0.005174)(23.7,0.005057)(23.8,0.004939)(23.9,0.004822)(24.,0.004705)(24.1,0.004588)(24.2,0.004471)(24.3,0.004354)(24.4,0.004238)(24.5,0.004121)(24.6,0.004004)(24.7,0.003887)(24.8,0.003769)(24.9,0.003652)(25.,0.003534)(25.1,0.003415)(25.2,0.003296)(25.3,0.003175)(25.4,0.003054)(25.5,0.002932)(25.6,0.002807)(25.7,0.002681)(25.8,0.002553)(25.9,0.002421)(26.,0.002286)(26.1,0.002146)(26.2,0.001999)(26.3,0.001846)(26.4,0.001683)(26.5,0.001517)(26.6,0.001376)(26.7,0.001314)(26.8,0.001302)(26.9,0.001303)(27.,0.001303)(27.1,0.001301)(27.2,0.001295)(27.3,0.001287)(27.4,0.001276)(27.5,0.001262)(27.6,0.001246)(27.7,0.001229)(27.8,0.001211)(27.9,0.001191)(28.,0.001172)(28.1,0.001152)(28.2,0.001132)(28.3,0.001112)(28.4,0.001093)(28.5,0.001074)(28.6,0.001055)(28.7,0.001035)(28.8,0.001015)(28.9,0.000994)(29.,0.000972)(29.1,0.000948)(29.2,0.000923)(29.3,0.000897)(29.4,0.000869)(29.5,0.000839)(29.6,0.000807)(29.7,0.000772)(29.8,0.000735)(29.9,0.000695)(30.,0.000651)(30.1,0.000604)(30.2,0.00055)
      };
      \legend{ {Empirical eigenvalue distribution},{Density of the deterministic equivalent}}
    \end{axis}
  \end{tikzpicture}
  \caption{Histogram of the eigenvalues of $\frac1n\sum_{i=1}^n x_ix_i^*$ for $n=2500$, $N=500$, $C_N=\diag(I_{125},3I_{125},10I_{250})$, $\tau_1$ with $\Gamma(.5,2)$-distribution.}
  \label{fig:hist_sample_cov}
\end{figure}

Figures~\ref{fig:hist_hatC} and \ref{fig:hist_hatS} depict the empirical histogram of the eigenvalues of $\hat{C}_N$ and $\hat{S}_N$, for $N=500$ and $n=2500$ with $u(t)=(1+\alpha)/(t+\alpha)$, $\alpha=0.1$, $C_N=\diag(I_{125},3I_{125},10I_{250})$, and $\tau_1,\ldots,\tau_n$ i.i.d. with $\Gamma(.5,2)$ distribution. In thick line is also depicted the density of $\mu_N$ in Corollary~\ref{cor:spectrum} which shows an accurate match to the empirical spectrum as predicted by \eqref{eq:spectrum_conv}. As a comparison, Figure~\ref{fig:hist_sample_cov} shows the empirical histogram of the eigenvalues of the sample covariance matrix $\frac1n\sum_{i=1}^n x_ix_i^*$ under the same parametrization against the deterministic equivalent density for this model in thick line \citep{ZHA09}. This graph presents a seemingly unbounded eigenvalue spectrum support (in fact bounded for each $N$ but growing with $N$) which is expected since $\tau_1$ has unbounded support. Also note the gain of separability in the spectrum of $\hat{C}_N$ which exhibits clearly three compacts subsets of eigenvalues, reminiscent of the three masses in the eigenvalue distribution of $C_N$, while $\frac1n\sum_{i=1}^n x_ix_i^*$ exhibits a single compact set of eigenvalues. This has important consequences from detection and estimation purposes in signal processing application of robust estimation.

\bigskip

In the next section, we present the proofs of Theorem~\ref{th:uniqueness} and Theorem~\ref{th:main}.

\section{Proof of the main results}
\label{sec:proofs}
For the sake of definition, we take all variables to be complex here although the arguments are also valid for real random variables. 
\subsection{Proof of Theorem~\ref{th:uniqueness}}

As mentioned in Section~\ref{sec:preliminaries}, we can assume without generality restriction that $C_N=I_N$. Indeed, if $\hat{C}_N$ is the unique solution to \eqref{eq:def_hatCN} assuming $C_N=I_N$, then, for any other choice of $C_N\succ 0$, $C_N^\frac12\hat{C}_NC_N^\frac12$ is the unique solution to the corresponding model in \eqref{eq:def_hatCN}. Hence, we only need to prove the result for $C_N=I_N$.

Consider a growing sequence $\{x_1,\ldots,x_n\}_{n=1}^\infty$ according to Assumption~\ref{ass:cN}. Since $|\{\tau_i=0\}|=n\nu_n(\{0\})<n(1-c_+)$ for all large $n$ a.s. (Assumption~\ref{ass:xi}--\ref{item:0}), $n-|\{\tau_i=0\}|>c_+n>N+1$ which, along with $z_1,\ldots,z_n$ being normalized Gaussian vectors, ensures that $\{x_1,\ldots,x_{j-1},x_{j+1},\ldots,x_n\}$ spans $\CC^N$ for all $j$ for all large $n$ a.s.
	As long as $n$ is large enough, we can therefore almost surely define $h=(h_1,\ldots,h_n)$ with $h_j:\RR_+^n\to \RR_+$ given by
	\begin{align*}
		h_j(q_1,\ldots,q_n) = \frac1N z_j^* \left(\frac1n \sum_{i\neq j} \tau_i v \left( \tau_i q_i \right) z_iz_i^* \right)^{-1} z_j.
	\end{align*}
	As shown in Section~\ref{sec:rewriting}, in order to show that $\hat{C}_N$ is uniquely defined, it suffices to show that there exists a unique $q_1,\ldots,q_n$ such that for each $j$, $q_j=h_j(q_1,\ldots,q_n)$. For this, we show first that $h$ satisfies the following properties with probability one:
	\begin{itemize}
		\item[(a)] Nonnegativity: For each $q_1,\ldots,q_n\geq 0$ and each $i$, $h_i(q_1,\ldots,q_n)>0$
		\item[(b)] Monotonicity: For each $q_1\geq q_1',\ldots,q_n\geq q_n'$ and each $i$, $h_i(q_1,\ldots,q_n)\geq h_i(q_1',\ldots,q_n')$
		\item[(c)] Scalability: For each $\alpha>1$ and each $i$, $\alpha h_i(q_1,\ldots,q_n)>h_i(\alpha q_1,\ldots,\alpha q_n)$.
	\end{itemize}
	Item (a) is obvious since the matrix inverse is well defined for all $n$ large and $z_i\neq 0$ almost surely. Item (b) follows from the fact that, for two Hermitian matrices $A\succeq B\succ 0$, $B^{-1}\succeq A^{-1}\succ 0$ (\cite[Corollary~7.7.4]{HOR85}), and from $v$ being non-increasing, entailing $h_i$ to be a non-decreasing function of each $q_j$. As for Item (c), it follows also from the previous matrix inverse relation and from $\psi$ being increasing, entailing in particular that, for $\alpha>1$, $\psi(\alpha q_i)>\psi(q_i)$ if $q_i\neq 0$ so that $v(\alpha q_i)>v(q_i)/\alpha$ for $q_i\geq 0$.

	According to Yates \cite[Theorem~2]{YAT95}, $h$ is then a {\it standard interference function} and, if there exists $q_1,\ldots,q_n$ such that for each $i$, $q_i>h_i(q_1,\ldots,q_n)$ (feasibility condition), then there is a unique $\{q_1,\ldots,q_n\}$ satisfying $q_i=h_i(q_1,\ldots,q_n)$ for each $i$, which is given by $q_i=\lim_{t\to\infty} q_i^{(t)}$ with $q^{(0)}_i\geq 0$ arbitrary and, for $t\geq 0$, $q_i^{(t+1)}=h_i(q_1^{(t)},\ldots,q_n^{(t)})$ (which would then conclude the proof). To obtain the feasibility condition, note that the function $q\mapsto \frac1Nz_j^* \left( \frac1n \sum_{i\neq j} \psi(\tau_i q) z_iz_i^* \right)^{-1}z_j$ is decreasing and, as $q\to\infty$, has limit $\frac{1-c_N\phi_\infty}{\phi_\infty} \frac1Nz_j^* \left( \frac1n \sum_{i\neq j,\tau_i\neq 0} z_iz_i^* \right)^{-1}z_j$.
	As $\{\tau_i\}_{i=1}^n$ and $\{z_i\}_{i=1}^n$ are independent and $\limsup_n N/|\{\tau_i\neq 0\}|=\limsup c_N/(1-\nu_n(\{0\}))<1$ a.s. (Assumption~\ref{ass:xi} and Assumption~\ref{ass:cN}), for all large $n$ a.s., we fall within the hypotheses of Lemma~\ref{le:uniform_conv_sum_zi} in the Appendix and we can then write,\footnote{To be more exact, since $|\{\tau_i\neq 0\}|$ is random with probability space $\mathcal T$ producing the $\tau_i$'s, Lemma~\ref{le:uniform_conv_sum_zi} applies only on a subset of probability one of $\mathcal T$. It then suffices to apply Tonelli's theorem \citep{BIL08} to ensure that Lemma~\ref{le:uniform_conv_sum_zi} can be extended and still holds with probability one on the product space producing the $(\tau_i,z_i)$.}
	\begin{align*}
		\max_{1\leq j\leq n}\left|(1-\nu_n(\{0\}))\frac1Nz_j^* \left( \frac1n \sum_{\tau_i\neq 0} z_iz_i^* \right)^{-1}z_j-1\right|\asto 0.
	\end{align*}
	Assume first that $\tau_j\neq 0$. Then, using the relation
	\begin{align*}
		\frac1Nz_j^* \left( \frac1n \sum_{\tau_i\neq 0,i\neq j} z_iz_i^* \right)^{-1}z_j = \frac{\frac1Nz_j^* \left( \frac1n \sum_{\tau_i\neq 0} z_iz_i^* \right)^{-1}z_j}{1-c_N \frac1Nz_j^* \left( \frac1n \sum_{\tau_i\neq 0} z_iz_i^* \right)^{-1}z_j}
	\end{align*}
	and the fact that for all large $n$ a.s. $1-\nu_n(\{0\})>c_+$, we have
	\begin{align*}
		\max_{j,\tau_j\neq 0}\left|\frac1Nz_j^* \left( \frac1n \sum_{\tau_i\neq 0,i\neq j} z_iz_i^* \right)^{-1}z_j- \frac1{1-\nu_n(\{0\})-c_N}\right|\asto 0.
	\end{align*}
	Therefore, using the fact that $\nu_N(\{0\})<1-\phi_\infty^{-1}$ for all $n$ large a.s. (Assumption~\ref{ass:xi}--\ref{item:0}), we have that for all $j$ with $\tau_j\neq 0$
	\begin{align}
		\label{eq:ineq_uniq}
		\frac{1-c_N\phi_\infty}{\phi_\infty} \frac1Nz_j^* \left( \frac1n \sum_{\tau_i\neq 0,i\neq j} z_iz_i^* \right)^{-1}z_j < 1.
	\end{align}
	If instead $\tau_j=0$, then
	\begin{align*}
		\max_{j,\tau_j=0}\left|\frac1Nz_j^* \left( \frac1n \sum_{\tau_i\neq 0} z_iz_i^* \right)^{-1}z_j- \frac1{1-\nu_n(\{0\})}\right|\asto 0.
	\end{align*}
	and we find also the inequality \eqref{eq:ineq_uniq} for all large $n$ a.s. and for all $j$ with $\tau_j=0$, using once more $\nu_N(\{0\})<1-\phi_\infty^{-1}$. As such, \eqref{eq:ineq_uniq} is valid for all $j\in\{1,\ldots,n\}$.

	We can then choose $n$ large enough so that \eqref{eq:ineq_uniq} holds for all $j$, after what, taking $q$ sufficiently large,
	\begin{align*}
		\frac1Nz_j^* \left( \frac1n \sum_{i\neq j} \psi(\tau_i q) z_iz_i^* \right)^{-1}z_j<1
	\end{align*}
	which is equivalent to
	\begin{align*} 
		\frac1Nz_j^* \left( \frac1n \sum_{i\neq j} v(\tau_i q) \tau_i z_iz_i^* \right)^{-1}z_j < q
	\end{align*}
	for all $j$, i.e. $h_j(q,\ldots,q)<q$. This ensures feasibility for all large $n$ a.s. and concludes the proof.

\subsection{Proof of Theorem~\ref{th:main}}

Similar to the proof of Theorem~\ref{th:uniqueness}, we can restrict ourselves to the assumption that $C_N=I_N$. The generalization to $C_N$ satisfying Assumption~\ref{ass:xi}-\ref{item:CN}) will follow straightforwardly. We therefore take $C_N=I_N$ in what follows.

We start the proof by introducing the following fundamental lemmas (note that these lemmas in fact hold true irrespective of $C_N\succ 0$).

\begin{lemma}
	\label{le:gamma_N}
	Let Assumption~\ref{ass:cN} hold and let $h:[0,\infty)\to [0,\infty)$ be given by
	\begin{align*}
		h(\gamma) = \left( \frac1n \sum_{i=1}^n \frac{\tau_i v(\tau_i \gamma) }{1+c_N\tau_i v(\tau_i \gamma) \gamma} \right)^{-1} = \left\{ 
			\begin{array}{ll}
				\gamma \left( \frac1n\sum_{i=1}^n \frac{\psi(\tau_i \gamma)}{1+c_N\psi(\tau_i\gamma)} \right)^{-1} &,~\gamma>0 \\
				\frac1{v(0)}\left( \frac1n\sum_{i=1}^n \tau_i \right)^{-1}&,~\gamma=0.
			\end{array}
			\right.
	\end{align*}
	Then, for all large $n$ a.s., there exists a unique $\gamma_N>0$ satisfying $\gamma_N=h(\gamma_N)$, given by
	\begin{align*}
		\gamma_N = \lim_{t\to\infty} \gamma_N^{(t)}
	\end{align*}
	with $\gamma_N^{(0)}\geq 0$ arbitrary and, for $t\geq 0$, $\gamma_N^{(t+1)} = h(\gamma_N^{(t)})$. Moreover, with probability one, 
	\begin{align*}
\gamma_-<\liminf_N \gamma_N\leq \limsup_N \gamma_N <\gamma_+
	\end{align*}
	for some $\gamma_-,\gamma_+>0$ finite.
\end{lemma}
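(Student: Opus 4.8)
The plan is to prove existence and uniqueness of the fixed point via a monotonicity/contraction-type argument on the scalar function $h$, and then to establish the uniform bounds on $\gamma_N$ by controlling the empirical measures $\nu_n$ using Assumption~\ref{ass:xi}.

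\textbf{Existence and uniqueness of $\gamma_N$.}
First I would observe that, exactly as in the proof of Theorem~\ref{th:uniqueness}, the scalar map $h$ is a one-dimensional standard interference function in the sense of Yates \cite[Theorem~2]{YAT95}: it is positive (since $v>0$ and $\frac1n\sum_i\tau_i>0$ for all large $n$ a.s. by Assumption~\ref{ass:xi}--\ref{item:nu}), monotone nondecreasing (because $\psi$ is increasing, hence $\gamma\mapsto \psi(\tau_i\gamma)/(1+c_N\psi(\tau_i\gamma))$ is nondecreasing and appears in the denominator of $h$), and scalable (for $\alpha>1$, $\psi(\alpha\tau_i\gamma)>\psi(\tau_i\gamma)$ when $\tau_i\gamma>0$, so $\alpha h(\gamma)>h(\alpha\gamma)$; the case $\gamma=0$ is handled separately using continuity of $h$ at $0$, which follows from $\psi(0)=0$ and $v$ continuous). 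By Yates' theorem, once a feasibility point $\gamma$ with $\gamma>h(\gamma)$ exists, the fixed point is unique and is the limit of the iterates $\gamma_N^{(t+1)}=h(\gamma_N^{(t)})$ from any starting point. Feasibility for large $\gamma$: as $\gamma\to\infty$, $h(\gamma)\to \gamma\big(\frac1n\sum_i \frac{\psi(\tau_i\gamma)}{1+c_N\psi(\tau_i\gamma)}\big)^{-1}$, and since $\psi(\tau_i\gamma)\to\psi^N_\infty=\phi_\infty/(1-c_N\phi_\infty)$ for each $i$ with $\tau_i\neq0$ while staying bounded, the sum $\frac1n\sum_i \frac{\psi(\tau_i\gamma)}{1+c_N\psi(\tau_i\gamma)}$ tends to $(1-\nu_n(\{0\}))\frac{\psi^N_\infty}{1+c_N\psi^N_\infty}$, which I claim exceeds $1$ for all large $n$ a.s. Indeed $\frac{\psi^N_\infty}{1+c_N\psi^N_\infty}=\frac{\phi_\infty}{1-c_N\phi_\infty+c_N\phi_\infty}=\phi_\infty$ after simplification, wait — more carefully $\frac{\psi^N_\infty}{1+c_N\psi^N_\infty}=\phi_\infty/(1-c_N\phi_\infty+c_N\phi_\infty)=\phi_\infty$, so the limiting sum is $(1-\nu_n(\{0\}))\phi_\infty$, and by Assumption~\ref{ass:xi}--\ref{item:0} we have $\nu_n(\{0\})<1-\phi_\infty^{-1}$ for all large $n$ a.s., whence $(1-\nu_n(\{0\}))\phi_\infty>1$. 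Thus $h(\gamma)<\gamma$ for $\gamma$ large, giving feasibility and hence a unique positive fixed point.

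\textbf{Upper bound $\limsup_N\gamma_N<\gamma_+$.}
Since $h$ is nondecreasing and $h(\gamma)<\gamma$ for all $\gamma$ large enough, the fixed point $\gamma_N$ lies below any such threshold; I need that threshold to be chosen uniformly in $N$. From the computation above, $\frac1n\sum_i\frac{\psi(\tau_i\gamma)}{1+c_N\psi(\tau_i\gamma)}$ increases to $(1-\nu_n(\{0\}))\phi_\infty$, and using a.s.\ tightness of $\{\nu_n\}$ (noted after Assumption~\ref{ass:xi}) together with Assumption~\ref{ass:xi}--\ref{item:0}, one can pick $m_0,\varepsilon_0$ with $\nu_n([0,m_0))<\varepsilon_0$ and $\varepsilon_0<1-\phi_\infty^{-1}$ uniformly for all large $n$ a.s. Then for $\gamma$ large the mass on $[m_0,\infty)$ contributes at least $(1-\varepsilon_0)\cdot\frac{\phi(m_0\gamma)}{1+c_N\phi(m_0\gamma)}\cdot$(appropriate factor), which exceeds $1+\eta$ for some fixed $\eta>0$ once $\gamma>\gamma_+$, uniformly in $N$; hence $\gamma_N<\gamma_+$ for all large $n$ a.s.

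\textbf{Lower bound $\liminf_N\gamma_N>\gamma_-$.}
Here I would argue that $h(\gamma)\geq \gamma\big(\frac1n\sum_i\psi(\tau_i\gamma)\big)^{-1}$ (dropping the nonnegative $c_N\psi$ in the denominator), and since $\psi$ is bounded by $\psi^N_\infty\leq \phi_\infty/(1-c_+\phi_\infty)=:\bar\psi$ uniformly, $h(\gamma)\geq \gamma/\bar\psi$. That alone is not enough, so instead I would use $\psi(x)\leq \phi_\infty$ to get the sum $\leq \phi_\infty<c_+^{-1}\leq c_N^{-1}$... more to the point, $h(0)=\frac1{v(0)}(\frac1n\sum_i\tau_i)^{-1}$, and by Assumption~\ref{ass:xi}--\ref{item:nu} $\frac1n\sum_i\tau_i\to1$ a.s., so $h(0)\to 1/v(0)>0$; combined with $h$ nondecreasing and the fixed-point iteration started at $0$ being nondecreasing and converging up to $\gamma_N$, we get $\gamma_N\geq h(0)>\gamma_-$ for a fixed $\gamma_-\in(0,1/v(0))$ for all large $n$ a.s. This is the cleanest route and avoids delicate tail estimates.

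\textbf{Main obstacle.} The genuine difficulty is the \emph{uniformity in $N$} of the upper bound: I must rule out $\gamma_N\to\infty$, which amounts to showing the limiting value of the fixed-point sum stays bounded away from $1$ from above as $\gamma$ grows, uniformly over $N$. This requires combining a.s.\ tightness of $\{\nu_n\}$ with the strict inequality $\varepsilon<1-\phi_\infty^{-1}$ in Assumption~\ref{ass:xi}--\ref{item:0} to produce a single threshold $\gamma_+$ and a gap $\eta>0$ valid for all large $n$ simultaneously; the interplay with the ``for all large $n$ a.s.'' quantifier (the exceptional null set and $n_0$ depending on the sequence) needs care but is standard. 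The lower bound and the Yates machinery are routine once the scalability/monotonicity of $h$ are checked, reusing verbatim the arguments already deployed in the proof of Theorem~\ref{th:uniqueness}.
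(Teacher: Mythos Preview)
Your approach mirrors the paper's proof almost exactly: both verify that $h$ is a standard interference function, invoke Yates' theorem once feasibility $h(\gamma)<\gamma$ is checked via the limiting value $(1-\nu_n(\{0\}))\phi_\infty>1$, obtain the lower bound from $h(0)\to 1/v(0)$, and obtain the uniform upper bound from Assumption~\ref{ass:xi}--\ref{item:0} by restricting to $\tau_i\geq m$.

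There is, however, a genuine gap in your monotonicity step. You argue that $\gamma\mapsto\psi(\tau_i\gamma)/(1+c_N\psi(\tau_i\gamma))$ is nondecreasing ``and appears in the denominator of $h$''. But in the form $h(\gamma)=\gamma\cdot S(\gamma)^{-1}$ the factor $\gamma$ in the numerator competes with the growing $S(\gamma)$ in the denominator, so this reasoning does not establish that $h$ is nondecreasing (take $S(\gamma)=\gamma^2$ as a counterexample to the logic). The fix is to use the first expression for $h$ instead: $h(\gamma)=\bigl(\frac1n\sum_i \tau_i v(\tau_i\gamma)/(1+c_N\psi(\tau_i\gamma))\bigr)^{-1}$; since $v$ is nonincreasing and $\psi$ is increasing, each summand is nonincreasing in $\gamma$, whence $h$ is nondecreasing. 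The paper does essentially this, via a direct computation of the sign of $h(\gamma)-h(\gamma')$.

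A secondary slip occurs in your upper-bound paragraph: you write $\phi(m_0\gamma)/(1+c_N\phi(m_0\gamma))$ where $\psi$ is required. With $\phi$ the limit as $\gamma\to\infty$ is $\phi_\infty/(1+c_N\phi_\infty)$, which need not exceed $1$ (e.g.\ $\phi_\infty=1.1$, $c_N=0.5$), so the argument would fail. With $\psi$ in place of $\phi$ one gets $\psi^N_\infty/(1+c_N\psi^N_\infty)=\phi_\infty$ exactly, and then $(1-\varepsilon_0)\phi_\infty>1$ gives the uniform threshold $\gamma_+$ with no ``appropriate factor'' needed.
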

\begin{proof}
	As in the proof of Theorem~\ref{th:uniqueness}, we show that $h$ (scalar-valued this time) is a standard interference function. We show easily positivity, monotonicity and scalability of $h$. Indeed, for $\gamma\geq 0$, $h(\gamma)>0$. For $\gamma\geq \gamma'\geq 0$, 
	\begin{align*}
		\frac{h(\gamma)-h(\gamma')}{h(\gamma)h(\gamma')} &= \frac1n\sum_{i=1}^n \frac{\tau_i \left( v(\tau_i \gamma')- v(\tau_i \gamma) \right) + (\gamma-\gamma')c_N \tau_i^2v(\tau_i \gamma)v(\tau_i \gamma')}{(1+c_N\tau_i v(\tau_i \gamma)\gamma)(1+c_N\tau_i v(\tau_i \gamma')\gamma')} \geq 0
	\end{align*}
	which follows from $v$ being nonnegative decreasing. Finally, for $\alpha>1$, $\alpha h(0)>h(0)$ and for $\gamma\neq 0$, 
	\begin{align*}
		h(\alpha \gamma)&=\alpha \gamma \left( \frac1n\sum_{i=1}^n \frac{\psi(\tau_i \alpha \gamma)}{1+c_N \psi(\tau_i \alpha \gamma)} \right)^{-1} < \alpha \gamma \left( \frac1n\sum_{i=1}^n \frac{\psi(\tau_i \gamma)}{1+c_N \psi(\tau_i \gamma)} \right)^{-1} \\
		&= \alpha h(\gamma)
	\end{align*}
	which follows from $\gamma\mapsto \psi(\tau_i \gamma)(1+c_N \psi(\tau_i \gamma))^{-1}$ being increasing as long as $\tau_i\neq 0$. It remains to prove the existence of a $\gamma$ such that $\gamma>h(\gamma)$, inducing by \cite[Theorem~2]{YAT95} the uniqueness of the fixed-point $\gamma_N$ given by $\gamma_N = \lim_{t\to\infty} \gamma_N^{(t)}$ as stated in the theorem. For this, we use again the fact that $\gamma\mapsto \psi(\tau_i \gamma)(1+c_N \psi(\tau_i \gamma))^{-1}$ is increasing and that (Assumption~\ref{ass:xi}--\ref{item:0}), for all large $n$ a.s.
\begin{align*}
	\lim_{\gamma\to\infty} \frac1n\sum_{i=1}^n \frac{\psi(\tau_i \gamma)}{1+c_N\psi(\tau_i \gamma)} = \frac{(1-\nu_n(\{0\})) \psi^N_\infty }{1+c_N \psi^N_\infty} = (1-\nu_n(\{0\})) \phi_\infty > 1.
\end{align*}
Therefore, there exists $\gamma_0$ (a priori dependent on the set $\{\tau_1,\ldots,\tau_n\}$) such that, for all $\gamma>\gamma_0$, $h(\gamma)< \gamma$. 

To prove uniform boundedness of $\gamma_N$, let $\varepsilon>0$ and $m>0$ be such that $(1-\varepsilon)\phi_\infty > 1$ and $\nu_n( (m,\infty) )>1-\varepsilon$ for all $n$ large a.s. (always possible from Assumption~\ref{ass:xi}--\ref{item:0}). Then, for all $n$ large a.s.
\begin{align*}
	\frac1n\sum_{i=1}^n \frac{\psi(\tau_i \gamma)}{1+c_N\psi(\tau_i \gamma)} > (1-\varepsilon) \frac{\psi(m \gamma)}{1+c_N\psi(m\gamma)} \to (1-\varepsilon)\phi_\infty > 1
\end{align*}
as $\gamma\to\infty$.
Similar to $\gamma_0$ above, we can therefore choose $\gamma_+$ large enough, now independent of $n$ large, such that, a.s. $\gamma\geq \gamma_+ \Rightarrow \gamma>h(\gamma)$, implying $\gamma_N<\gamma_+$ for these $n$ large since $\gamma_N=h(\gamma_N)$. Also, $h(0)>1/(2v(0))$ for all large $n$ a.s. since $\frac1n\sum_{i=1}^n \tau_i\asto 1$ by Assumption~\ref{ass:xi}. Hence, by the continuous growth of $h$, we can take $\gamma_-=1/(2v(0))>0$ which is such that $\gamma\leq \gamma_- \Rightarrow h(\gamma)\geq h(0)>\gamma$ for all large $n$ a.s. This implies $\gamma_N>\gamma_-$ for all large $n$ a.s., which concludes the proof.
\end{proof}

\begin{remark}
	\label{rem:lemma1_eta}
	For further use, note that Lemma~\ref{le:gamma_N} can be refined as follows. Let $(\eta,M_\eta)$ be couples indexed by $\eta$ with $0<\eta<1$ and $M_\eta>0$ such that $\nu_n( (M_\eta,\infty) )<\eta$ for all large $n$ a.s. (possible by tightness of $\nu_n$). Then, for sufficiently small $\eta$, the equation in $\gamma$
	\begin{align*}
		\gamma = \left( \frac1n \sum_{\tau_i\leq M_\eta} \frac{\tau_i v(\tau_i \gamma)}{1+c_N \tau_i v(\tau_i \gamma)\gamma} \right)^{-1}
	\end{align*}
	has a unique solution $\gamma_N^\eta$ for all large $n$ a.s. and there exists $\gamma_-,\gamma_+>0$ such that, for all $\eta<\eta_0$ small, $\gamma_-<\gamma_N^\eta<\gamma_+$ for all large $n$ a.s. 
\end{remark}
\begin{proof}
	The uniqueness is clear as long as $(1-\eta_0)(1-\limsup_n\nu_n(\{0\}))\phi_\infty>1$ since then, exploiting the fact that $\lim_n \frac{|\{\tau_i\leq M_\eta\}|}{n}>1-\eta_0$ a.s., 
\begin{align*}
	\lim_{\gamma\to\infty} \frac1n\sum_{\tau_i\leq M_\eta} \frac{\psi(\tau_i \gamma)}{1+c_N\psi(\tau_i \gamma)} = \frac{|\{\tau_i\leq M_\eta\}|}{n} (1-\nu_n(\{0\})) \phi_\infty > 1
\end{align*}
for all $n$ large a.s. and the proof follows from the proof of Lemma~\ref{le:gamma_N}. 
For uniform boundedness, taking $M_{\eta_0}<M_\eta$ large enough (or equivalently $\eta_0>\eta$ small enough) such that $\liminf_n \frac{|\{m<\tau_i\leq M_\eta\}|}n>\liminf_n \frac{|\{m<\tau_i\leq M_{\eta_0}\}|}n>1-\varepsilon$ a.s. in the proof of Lemma~\ref{le:gamma_N} leads to the same upper bound result for all small $\eta<\eta_0$. As for the lower bound, we still have $h(0)>1/(2v(0))$ for all large $n$ a.s. independently of $\eta$ so the result is maintained.
\end{proof}

\begin{lemma}
	\label{le:convergence_gamma}
	Let Assumption~\ref{ass:cN} hold and define $\gamma_N$ as in Lemma~\ref{le:gamma_N}. Then, as $n\to\infty$,
	\begin{align*}
		\max_{1\leq j\leq n} \left| \frac1N z_j^* \left( \frac1n \sum_{i\neq j} \tau_i v(\tau_i \gamma_N) z_iz_i^* \right)^{-1} z_j - \gamma_N \right| \asto 0.
	\end{align*}
\end{lemma}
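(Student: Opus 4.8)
\emph{Proof strategy.} Keep $C_N=I_N$ as in the rest of the proof of Theorem~\ref{th:main} (the general case is identical, with $C_N$ replacing $I_N$ below), and write $t_i\triangleq\tau_i v(\tau_i\gamma_N)$, $B_N\triangleq\frac1n\sum_{i=1}^n t_i z_iz_i^*$ and $B_{(j)}\triangleq B_N-\frac1n t_j z_jz_j^*=\frac1n\sum_{i\neq j}t_i z_iz_i^*$. The crucial structural fact is that $\gamma_N$ depends on $\tau_1,\ldots,\tau_n$ only (Lemma~\ref{le:gamma_N}), hence so do the $t_i$: conditionally on $\sigma(\tau_1,\ldots,\tau_n)$ the $t_i$ are deterministic, $z_j$ is independent of $B_{(j)}$, and we may use the representation $z_i=\sqrt{\bar N}A_N\tilde y_i/\Vert\tilde y_i\Vert$ with $\tilde y_i$ standard complex Gaussian and $A_NA_N^*=I_N$, so that $A_N\tilde y_i\sim\mathcal{CN}(0,I_N)$ and $\sqrt{\bar N}/\Vert\tilde y_i\Vert\to 1$ uniformly over $i\leq n$ a.s.; the $z_i$ may thus be treated as genuine Gaussian vectors of $\CC^N$ up to a multiplicative error that is negligible uniformly in $j$. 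The plan is then: (i) bound $\lambda_{\min}(B_{(j)})$ and $\lambda_{\max}(B_{(j)})$ uniformly in $j$; (ii) use the trace lemma to replace the quadratic form by a normalised trace; (iii) use a rank-one perturbation bound and the classical deterministic equivalent to show that this trace converges to $\gamma_N$.

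\emph{Step (i).} Since $\psi$ is bounded (by $\psi^N_\infty=\phi_\infty/(1-c_N\phi_\infty)\leq\phi_\infty/(1-c_+\phi_\infty)<\infty$ for all large $n$), $t_i=\psi(\tau_i\gamma_N)/\gamma_N\leq\psi^N_\infty/\gamma_-$ using $\gamma_N>\gamma_-$ from Lemma~\ref{le:gamma_N}, so $\Vert B_{(j)}\Vert\leq(\psi^N_\infty/\gamma_-)\Vert\frac1n\sum_{i=1}^n z_iz_i^*\Vert$, which is bounded for all large $n$ a.s., uniformly in $j$, by the largest-eigenvalue estimate \citep{SIL98}. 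For the lower bound, Assumption~\ref{ass:xi}--\ref{item:0} (here the inequality $\varepsilon<1-c_+$ is used) together with tightness of $\nu_n$ (Assumption~\ref{ass:xi}--\ref{item:nu}) provide $0<m\leq M<\infty$ and $\delta>0$ such that $|\{i:\ m\leq\tau_i\leq M\}|\geq(1+\delta)N$ for all large $n$ a.s.; on that index set $t_i=\psi(\tau_i\gamma_N)/\gamma_N\geq\psi(m\gamma_-)/\gamma_+\triangleq\kappa>0$, whence $B_{(j)}\succeq\kappa\,\frac1n\sum_{i\neq j,\,m\leq\tau_i\leq M}z_iz_i^*$, and the smallest eigenvalue of the right-hand side is bounded away from $0$ for all large $n$ a.s.\ by Lemma~\ref{le:uniform_conv_sum_zi} (a union bound over the $n$ values of $j$, removing one column being harmless, makes this uniform in $j$). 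Thus $0<\kappa'\leq\lambda_{\min}(B_{(j)})\leq\lambda_{\max}(B_{(j)})\leq K<\infty$ for all $j$, for all large $n$ a.s.

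\emph{Steps (ii)--(iii).} Conditioning on $\sigma(\tau_1,\ldots,\tau_n)$ and on $\{z_i\}_{i\neq j}$, Lemma~\ref{le:trace_lemma} applied to $\frac1N z_j^* B_{(j)}^{-1}z_j$ (with $\EE[z_jz_j^*]=I_N$ and $\Vert B_{(j)}^{-1}\Vert\leq1/\kappa'$ on the event of Step~(i)) gives a concentration rate fast enough that a union bound over $j=1,\ldots,n$ and Borel--Cantelli yield $\max_j|\frac1N z_j^* B_{(j)}^{-1}z_j-\frac1N\tr B_{(j)}^{-1}|\asto0$; this step is structurally the same as the corresponding one in \citep{paper}. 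The rank-one perturbation bound (Lemma~\ref{le:rank1perturbation}) gives $|\frac1N\tr B_{(j)}^{-1}-\frac1N\tr B_N^{-1}|\leq\frac1{N\kappa'}\to0$ uniformly in $j$. Finally, because the $t_i$ are independent of the $z_i$ and $\lambda_{\min}(B_N)\geq\kappa'>0$, the classical deterministic equivalent for sample-covariance matrices \citep{SIL95} (extended from $\CC^+$ to a neighbourhood of $0$ by a normal-families argument, legitimate since $0$ stays outside the support) gives $|\frac1N\tr B_N^{-1}-e_N|\asto0$, where $e_N>0$ is the unique solution of $e_N=(\frac1n\sum_{i=1}^n t_i/(1+c_N t_i e_N))^{-1}$; since $\gamma_N$ solves exactly this equation (recall $t_i=\tau_i v(\tau_i\gamma_N)$), we get $e_N=\gamma_N$. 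Writing
\begin{align*}
\left|\frac1N z_j^* B_{(j)}^{-1}z_j-\gamma_N\right|
&\leq\left|\frac1N z_j^* B_{(j)}^{-1}z_j-\frac1N\tr B_{(j)}^{-1}\right|
+\left|\frac1N\tr B_{(j)}^{-1}-\frac1N\tr B_N^{-1}\right|\\
&\quad+\left|\frac1N\tr B_N^{-1}-\gamma_N\right|
\end{align*}
and taking the maximum over $j$ concludes.

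\emph{Main obstacle.} The difficulty is the uniformity over $j\in\{1,\ldots,n\}$ while $n\to\infty$: the trace lemma and the spectral bounds on $B_{(j)}$ are a priori fixed-$j$ statements, and turning them into simultaneous statements for all $j$ requires quantitative (exponential, or sufficiently-high-moment) concentration together with Borel--Cantelli. Within this, the delicate ingredient is the uniform lower bound $\lambda_{\min}(B_{(j)})\geq\kappa'>0$, which relies on extracting, for every $j$, at least $(1+\delta)N$ indices with $\tau_i$ in a fixed compact subset of $(0,\infty)$ — precisely where Assumption~\ref{ass:xi}--\ref{item:0} (through $\varepsilon<1-c_+$) and the tightness of $\nu_n$ are essential — and on a smallest-eigenvalue estimate for the associated Gram matrix; this same lower bound is what legitimises evaluating the deterministic equivalent at $z=0$ rather than only on $\CC^+$.
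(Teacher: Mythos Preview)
Your proof is correct and follows essentially the same route as the paper: a uniform lower bound on $\lambda_{\min}(B_{(j)})$ via Assumption~\ref{ass:xi}--\ref{item:0} and Lemma~\ref{le:uniform_conv_sum_zi}, then trace lemma with an indicator on that event plus union bound and Borel--Cantelli, then rank-one perturbation, then the deterministic equivalent at $z=0$ identified with $\gamma_N$. Two minor remarks: (a) the trace lemma requires independent entries, so one must first pass from $z_j$ to the Gaussian $\tilde z_j=A_N\tilde y_j$ and control the multiplicative factor $\bar N/\Vert\tilde y_j\Vert^2$ separately (you allude to this in the strategy but skip it in Step~(ii); the paper does this explicitly in two short inequalities); (b) your extension of the deterministic equivalent to $z=0$ via a normal-families argument is a clean alternative to the paper's direct modification of the bounds in \citep{COU09}.
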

\begin{proof}
	We first introduce some notations to simplify readability. First, we will write $z_j=\sqrt{\bar N}A_N\tilde{y}_j/\Vert \tilde{y}_j\Vert\triangleq \sqrt{\bar N}\tilde{z}_j/\Vert \tilde{y}_j\Vert$ with $\tilde{y}_j$ zero-mean $I_{\bar N}$-covariance Gaussian, hence $\tilde{z}_j$ zero-mean $I_N$-covariance Gaussian. With this notation, in what follows, we denote $A=\frac1n\sum_{i=1}^n\tau_iv(\tau_i\gamma_N)z_iz_i^*$, $A_{(j)}=A-\frac1n\tau_jv(\tau_j\gamma_N)z_jz_j^*$, $\tilde{A}=\frac1n\sum_{i=1}^n\tau_iv(\tau_i\gamma_N)\tilde{z}_i\tilde{z}_i^*$ and $\tilde{A}_{(j)}=\tilde{A}-\frac1n\tau_jv(\tau_j\gamma_N)\tilde{z}_j\tilde{z}_j^*$.

We first show that there exists $\eta>0$ such that, for all large $n$ a.s.
\begin{align}
	\label{eq:l1_away}
	\min_{1\leq j\leq n} \lambda_1 \left( A_{(j)} \right) > \eta
\end{align}
(recall that $\lambda_1$ stands for the smallest eigenvalue). For this, take $0<\varepsilon<1-c_+$ and $m>0$ be such that $\nu_n( (m,\infty) )>1-\varepsilon$ for all $n$ large a.s. (Assumption~\ref{ass:xi}--\ref{item:0}).
Using the fact that $xv(x)=\psi(x)$ is non-decreasing and that any subtraction of a nonnegative definite matrix cannot increase the smallest eigenvalue, we have
\begin{align}
\label{eq:lmin}	
\min_{1\leq j\leq n} \lambda_1 \left( A_{(j)}\right) &\geq \min_{1\leq j\leq n} \lambda_1 \left( \frac1n \sum_{i\neq j,\tau_i\geq m} \frac{\psi(\tau_i \gamma_N)}{\gamma_N} z_iz_i^* \right) \nonumber \\
&\geq \frac{\psi(m\gamma_N)}{\gamma_N} \min_{1\leq j\leq n} \lambda_1 \left( \frac1n \sum_{i\neq j,\tau_i \geq m} z_iz_i^* \right).
\end{align}
Since $\nu_n( (m,\infty) )>1-\varepsilon$ for all $n$ large a.s.,
\begin{align*}
	0<c_-<\liminf_n \frac{N}{|\{\tau_i\geq m\}|}\leq \limsup_n \frac{N}{|\{\tau_i\geq m\}|}< \frac{c_+}{1-\varepsilon}<1.
\end{align*}
From Lemma~\ref{le:uniform_conv_sum_zi} in the Appendix (see footnote in the proof of Theorem~\ref{th:uniqueness} for details), we can then write
\begin{align*}
	\min_{1\leq j\leq n} \lambda_1 \left( A_{(j)}\right) &\geq \frac{\psi(m\gamma_N)}{\gamma_N} \nu_n( (m,\infty) ) \min_{1\leq j\leq n} \lambda_1 \left( \frac1{|\{\tau_i\geq m\}|} \sum_{i\neq j,\tau_i \geq m} z_iz_i^* \right) \\
	&> \frac{\psi(m\gamma_N)}{\gamma_N} (1-\varepsilon)\eta'
\end{align*}
for some $\eta'>0$ which, along with the almost sure boundedness of $\gamma_N$ (Lemma~\ref{le:gamma_N}) proves \eqref{eq:l1_away}. 

Now that \eqref{eq:l1_away} is acquired, let $\EE_{z_j}$ denote the expectation with respect to $z_j$ (i.e. conditionally on the sigma-field engendered by the $z_i$, $i\neq j$, and the $\tau_i$) and $\kappa_j\triangleq 1_{ \{\lambda_1(A_{(j)})> \eta\}}$ with $\eta$ as defined in \eqref{eq:l1_away}.
From \cite[Lemma~B.26]{SIL06} (which applies here since $\tilde{z}_j$ and $\kappa_j^{1/p} A_{(j)}^{-1}$ are independent), for $p>2$,
\begin{align*}
	\EE_{\tilde{y}_j} \left[ \kappa_j \left| \frac1N \tilde{z}_j^* A_{(j)}^{-1} \tilde{z}_j - \frac1N\tr A_{(j)}^{-1} \right|^p \right] \leq \frac{\kappa_j K_p}{N^{\frac{p}2} } \left[ \left( \frac{\zeta_4}N \tr A_{(j)}^{-2} \right)^{\frac{p}2} + \frac{\zeta_{2p}}{N^{\frac{p}2}} \tr A_{(j)}^{-p}\right]
\end{align*}
for $\zeta_\ell$ any upper bound on $\EE[|\tilde{z}_{ij}|^\ell]$ and $K_p$ a constant dependent only on $p$. From the definition of $\kappa_j$, we have $\kappa_j \Vert A_{(j)}^{-1}\Vert < \eta^{-1}$, so that, using $\frac1N\tr B\leq \Vert B\Vert$ for nonnegative definite $B\in\CC^{N\times N}$,
\begin{align*}
	\EE_{\tilde{y}_j} \left[ \kappa_j \left| \frac1N \tilde{z}_j^* A_{(j)}^{-1} \tilde{z}_j - \frac1N\tr A_{(j)}^{-1} \right|^p \right] \leq \frac{K_p}{\eta^p N^{\frac{p}2}}\left(\zeta_4^{\frac{p}2} + \frac{\zeta_{2p}}{N^{\frac{p}2-1}} \right).
\end{align*}
This bound being irrespective of all $z_i$ and $\tau_i$, $i\neq j$, we can take the expectation with respect to all $y_i$, $i\neq j$, and all $\tau_i$ to obtain
\begin{align*}
	\EE \left[ \kappa_j \left| \frac1N \tilde{z}_j^* A_{(j)}^{-1} \tilde{z}_j - \frac1N\tr A_{(j)}^{-1} \right|^p \right] = \mathcal O\left( \frac1{N^{\frac{p}2}} \right).
\end{align*}
Taking $p>4$ and applying the union bound, Markov inequality, and Borel Cantelli lemma finally shows that
\begin{align}
	\label{eq:conv_trace}
	\max_{1\leq j\leq n}  \kappa_j \left| \frac1N \tilde{z}_j^* A_{(j)}^{-1} \tilde{z}_j - \frac1N\tr A_{(j)}^{-1} \right| \asto 0.
\end{align}

With the same arguments on $\kappa_j$ and with the same $p$ as above, now remark that
\begin{align*}
	\EE_{\tilde{y}_j} \left[ \kappa_j \left| \frac1N z_j^* A_{(j)}^{-1} z_j - \frac1N \tilde{z}_j^* A_{(j)}^{-1} \tilde{z}_j \right|^p \right] &= \EE_{\tilde{y}_j} \left[ \kappa_j \left| \frac1N z_j^* A_{(j)}^{-1} z_j \left( 1 - \frac{\Vert \tilde{y}_j\Vert^2}{\bar N} \right) \right|^p \right] \\
	&\leq \frac1{\eta^p} \EE_{\tilde{y}_j} \left[ \left| 1 - \frac{\Vert \tilde{y}_j\Vert^2}{\bar N} \right|^p \right] = \mathcal O\left( \frac1{N^{p/2}} \right)
\end{align*}
since $\bar{N}\geq N$. Therefore, by the union bound, Markov inequality, and Borel Cantelli lemma,
\begin{align}
	\label{eq:conv_Gauss}
	\max_{1\leq j\leq n}  \kappa_j\left| \frac1N z_j^* A_{(j)}^{-1} z_j - \frac1N \tilde{z}_j^* A_{(j)}^{-1} \tilde{z}_j \right| \asto 0.
\end{align}

Combining \eqref{eq:conv_trace} and \eqref{eq:conv_Gauss} along with the fact that $\min_{1\leq j\leq n} \kappa_j \asto 1$ (from \eqref{eq:l1_away}) finally gives
\begin{align*}
	\max_{1\leq j\leq n} \left| \frac1N z_j^* A_{(j)}^{-1} z_j - \frac1N\tr A_{(j)}^{-1} \right| \asto 0.
\end{align*}

By \eqref{eq:l1_away}, $A_{(j)}=(A_{(j)}-\frac{\eta}2I_N)+\frac{\eta}2I_N$ with $\liminf_n \lambda_1(A_{(j)}-\frac{\eta}2I_N)>0$ a.s., so we are in the conditions of Lemma \ref{le:rank1perturbation} and we have
\begin{align*}
	\max_{1\leq j\leq n} \left|\frac1N\tr A_{(j)}^{-1} - \frac1N\tr A^{-1} \right| \asto 0.
\end{align*}

It remains to find a deterministic equivalent for $\frac1N\tr A^{-1}$. Similar to above, note first that, for all large $n$ a.s.
\begin{align*}
	\left| \frac1N\tr A^{-1} - \frac1N\tr \tilde{A}^{-1} \right| \leq \frac1{\eta^2} \frac{\psi_\infty}{\gamma_N} \max_{1\leq j\leq n}\left|\frac{1-{\bar N}^{-1}\Vert \tilde{y}_j\Vert^2}{ {\bar N}^{-1}\Vert \tilde{y}_j \Vert^2} \right| \left\Vert \frac1n\sum_{i=1}^n \tilde{z}_j\tilde{z}_j^*\right\Vert
\end{align*}
where we used the definition and boundedness of $\psi$ and standard matrix inversion formulas. From \citep{SIL98}, the right hand side converges almost surely to zero, so that it is equivalent to consider $z_i$ or $\tilde{z}_i$. Now, the trace $\frac1N\tr \tilde{A}^{-1}$ is exactly the Stieltjes transform $\hat m_N(z)$ of the matrix $\tilde{A}$ evaluated at point $z=0$. Since $\lambda_1( \tilde{A} )\geq \lambda_1( \tilde{A}_{(1)} )>\eta$ for all large $n$ a.s. and since $\tau_i v(\tau_i \gamma_N)=\psi(\tau_i\gamma_N)\gamma_N^{-1}$ is uniformly bounded across $i$ and $n$ (from the boundedness of $\psi$ and Lemma~\ref{le:gamma_N}), from standard random matrix results (e.g. \citep{COU09})\footnote{
	More precisely, \citep{COU09} shows that $\hat{m}_N(z)-m_N(z)\asto 0$ for all points $z$ with $\Im[z]>0$. Using $\lambda_1(\tilde{A})>\eta$ for all large $n$ a.s., the proof can be generalized to all $z\in \CC$ with positive distance to $[\eta,\infty)$ by turning the bounds in $1/|\Im[z]|$ into $1/d(z,[\eta,\infty))$ with $d$ denoting the Hausdorff distance, so in particular to $z=0$. The convergence $\hat{m}_N(0)-m_N(0)\asto 0$ is in particular already obtained in the generalization of the existence result of \citep[Appendix~A-C]{COU09}, this time for $z=0$.
	The proof of uniqueness of $m_N(0)$ can then again be checked by standard interference function arguments, where feasibility follows in particular from the right-hand behaving as $c_Nm < m$ from Assumption~\ref{ass:cN}. 
}, we have 
\begin{align*}
	\hat m_N(0) - m_N(0) \asto 0
\end{align*}
where $m_N(0)$ is the unique nonnegative solution to the equation in $m$ (as long as at least one $\tau_i$ is non-zero)
\begin{align*}
	m = \left( \frac1n \sum_{i=1}^n \frac{\tau_i v(\tau_i \gamma_N)}{1+c_N \tau_i v(\tau_i \gamma_N) m} \right)^{-1}.
\end{align*}
Now, by definition, $\gamma_N$ coincides with such a solution. By uniqueness of $m_N(0)$, one must then have $m_N(0)=\gamma_N$ so that, gathering all results together,
\begin{align*}
	\max_{1\leq j\leq n} \left|\frac1N z_j^* A_{(j)}^{-1} z_j - \gamma_N \right| \asto 0
\end{align*}
which completes the proof.
\end{proof}

\begin{remark}
	\label{rem:lemma2_eta}
	Similar to Remark~\ref{rem:lemma1_eta}, note that Lemma~\ref{le:convergence_gamma} can be further extended to 
	\begin{align*}
		\max_{1\leq j\leq n} \left| \frac1N z_j^* \left( \frac1n \sum_{\tau_i \leq M_\eta,i\neq j} \tau_i v(\tau_i \gamma_N^\eta) z_iz_i^* \right)^{-1}z_j - \gamma_N^\eta \right| \asto 0
	\end{align*}
	for some $\eta$ small enough, with $M_\eta$ and $\gamma_N^\eta$ defined in Remark~\ref{rem:lemma1_eta}.
\end{remark}
\begin{proof}
	One shows boundedness of $\lambda_1(\frac1n\sum_{\tau_i\leq M_\eta,i\neq j} \tau_i v(\tau_i \gamma^\eta_N)z_iz_i^*)$ simply by taking $\eta$ for which $\nu_n( (m,M_\eta) )>1-\varepsilon$ for all large $n$ a.s. in the proof of Lemma~\ref{le:convergence_gamma}. Then it suffices to adapt all derivations by substituting $\tau_i$ by zero if $\tau_i>M_\eta$. The result follows straightforwardly. 
\end{proof}

The two lemmas above are standard random matrix results on $x_1,\ldots,x_n$, independent of the structure of $\hat{C}_N$. The next lemma introduces a first result on the matrix $\hat{C}_N$ which will be fundamental in what follows. Recall that we denoted $d_i=\frac1Nz_i^*\hat{C}_{(i)}^{-1}z_i$, with $\hat{C}_{(i)}=\hat{C}_N-\frac1n v(\tau_id_i)\tau_i z_iz_i^*$.
\begin{lemma}[Boundedness of the $d_i$]
	\label{le:di_bounded}
There exist $d_+>d_->0$ such that, for all large $n$ a.s.,
\begin{align*}
	d_-<\liminf_n \min_{1\leq i\leq n} d_i \leq \limsup_n \max_{1\leq i\leq n} d_i < d_+.
\end{align*}
\end{lemma}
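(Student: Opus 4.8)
The plan is to sandwich $\bar d\triangleq\max_{1\le i\le n}d_i$ and $\underline d\triangleq\min_{1\le i\le n}d_i$ (both well defined for all large $n$ a.s.\ by Theorem~\ref{th:uniqueness}; we keep the standing assumption $C_N=I_N$) between two deterministic constants, by comparing $\hat C_{(j)}$ with the auxiliary matrices $\frac1n\sum_{i\ne j}\psi(\tau_i\gamma)z_iz_i^*$ that Lemma~\ref{le:convergence_gamma} already controls. First I would set, for $\gamma>0$,
\begin{align*}
	Q(\gamma)\triangleq\max_{1\le j\le n}\frac1N z_j^*\Big(\frac1n\sum_{i\ne j}\psi(\tau_i\gamma)z_iz_i^*\Big)^{-1}z_j,\qquad
	q(\gamma)\triangleq\min_{1\le j\le n}\frac1N z_j^*\Big(\frac1n\sum_{i\ne j}\psi(\tau_i\gamma)z_iz_i^*\Big)^{-1}z_j;
\end{align*}
since $\psi$ is nondecreasing, $\gamma\mapsto\frac1n\sum_{i\ne j}\psi(\tau_i\gamma)z_iz_i^*$ increases in the positive semidefinite order, so $Q$ and $q$ are non-increasing. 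Writing $\bar d=d_{j_0}$ and using $d_i\le\bar d$ together with $v$ non-increasing, one has $\hat C_{(j_0)}=\frac1n\sum_{i\ne j_0}v(\tau_id_i)\tau_iz_iz_i^*\succeq\frac1{\bar d}\frac1n\sum_{i\ne j_0}\psi(\tau_i\bar d)z_iz_i^*$, hence $\bar d=\frac1N z_{j_0}^*\hat C_{(j_0)}^{-1}z_{j_0}\le\bar d\,Q(\bar d)$, i.e.\ $Q(\bar d)\ge1$. Symmetrically, writing $\underline d=d_{j_1}$ and using $d_i\ge\underline d$, one gets $\hat C_{(j_1)}\preceq\frac1{\underline d}\frac1n\sum_{i\ne j_1}\psi(\tau_i\underline d)z_iz_i^*$ and therefore $q(\underline d)\le1$.

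Next I would observe that, for each \emph{fixed} $\gamma>0$, the proof of Lemma~\ref{le:convergence_gamma} applies with the constant $\gamma$ in place of $\gamma_N$: the matrix $\frac1n\sum_{i\ne j}\psi(\tau_i\gamma)z_iz_i^*$ dominates $\psi(m\gamma)\frac1n\sum_{i\ne j,\tau_i\ge m}z_iz_i^*$, whose smallest eigenvalue is bounded below for all large $n$ a.s.\ by Assumption~\ref{ass:xi}--\ref{item:0} and Lemma~\ref{le:uniform_conv_sum_zi}, so that both $Q(\gamma)$ and $q(\gamma)$ converge a.s.\ to the common deterministic limit $m_N^\star(\gamma)/\gamma$, where $m_N^\star(\gamma)$ is the unique positive solution of $m=\big(\frac1n\sum_{i=1}^n\frac{\tau_iv(\tau_i\gamma)}{1+c_N\tau_iv(\tau_i\gamma)m}\big)^{-1}$ (this is Lemma~\ref{le:convergence_gamma}, except that the limit is no longer forced to equal $\gamma$, which happened there only because $\gamma_N$ solves \eqref{eq:fp_gamma}). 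The one elementary fact I need is that $m_N^\star(\gamma)<\gamma$ when $\gamma>\gamma_N$ and $m_N^\star(\gamma)>\gamma$ when $\gamma<\gamma_N$: the map $m\mapsto m\,\frac1n\sum_i\frac{\tau_iv(\tau_i\gamma)}{1+c_N\tau_iv(\tau_i\gamma)m}$ is strictly increasing, takes at $m=\gamma$ the value $\frac1n\sum_i\frac{\psi(\tau_i\gamma)}{1+c_N\psi(\tau_i\gamma)}$, which is increasing in $\gamma$ and equals $1$ exactly at $\gamma=\gamma_N$; comparing with $m_N^\star(\gamma)\,\frac1n\sum_i\frac{\tau_iv(\tau_i\gamma)}{1+c_N\tau_iv(\tau_i\gamma)m_N^\star(\gamma)}=1$ gives the claim.

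It then remains to choose, \emph{uniformly in $n$}, a large $d_+$ with $\limsup_n m_N^\star(d_+)/d_+<1$ and a small $d_->0$ with $\liminf_n m_N^\star(d_-)/d_->1$. For $d_+$ I would use $\frac1n\sum_i\frac{\psi(\tau_id_+)}{1+c_N\psi(\tau_id_+)}\ge(1-\varepsilon)\frac{\psi(md_+)}{1+c_N\psi(md_+)}$, which exceeds $1$ by a margin not depending on $n$ once $d_+$ is large, because $\psi(x)\to\psi_\infty^N$ uniformly in $n$ (one has $\psi_\infty^N-\psi(x)\le(1-c_+\phi_\infty)^{-2}\big(\phi_\infty-\phi(x(1-c_+\phi_\infty))\big)\to0$) and $\psi_\infty^N>\tfrac1{1-\varepsilon-c_N}$ uniformly over the range of $c_N$, the latter inequality being equivalent to $\phi_\infty(1-\varepsilon)>1$. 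For $d_-$ I would truncate: by tightness pick $M_\eta$ with $\nu_n((M_\eta,\infty))<\eta$ for all large $n$ a.s., so $\frac1n\sum_i\frac{\psi(\tau_id_-)}{1+c_N\psi(\tau_id_-)}\le\psi(M_\eta d_-)+\eta\phi_\infty$, which is below $1$ by a uniform margin once $\eta$ and then $d_-$ are small (and $d_-\le\frac1{2v(0)}$, so $d_-<\gamma_N$ for all large $n$ a.s.\ by Lemma~\ref{le:gamma_N}). In both cases the margin on the average is turned into a uniform margin on $m_N^\star(\gamma)/\gamma$ by noting that the strictly increasing map above has $m$-derivative bounded above by $\frac1n\sum_i\tau_iv(\tau_i\gamma)=\frac1\gamma\frac1n\sum_i\psi(\tau_i\gamma)$. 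Putting this together: for all large $n$ a.s., $Q(d_+)=m_N^\star(d_+)/d_++o(1)<1\le Q(\bar d)$ and $q(d_-)=m_N^\star(d_-)/d_-+o(1)>1\ge q(\underline d)$, and since $Q,q$ are non-increasing these force $\bar d<d_+$ and $\underline d>d_-$ for all large $n$ a.s.; replacing $d_+$ by $2d_+$ and $d_-$ by $d_-/2$ yields the strict inequalities in the statement.

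The genuine obstacle is this last step: making the two margins uniform in $n$. The quantities $m_N^\star(\gamma)$, $\psi=\psi^{(N)}$ and $\psi_\infty^N=\phi_\infty/(1-c_N\phi_\infty)$ all depend on $N$ through $c_N$, so one must check that the decisive facts --- the $\liminf_n$ of the large-$\gamma$ average being $>1$, the $\limsup_n$ of the truncated small-$\gamma$ average being $<1$, the uniform conditioning of $\frac1n\sum_{\tau_i\ge m}z_iz_i^*$, and the $m$-derivative bound --- hold uniformly over the compact range of $c_N$ permitted by Assumption~\ref{ass:cN}. This is exactly the place where $\phi_\infty<c_+^{-1}$ (which keeps $\psi_\infty^N$ uniformly bounded and $1-c_N\phi_\infty$ uniformly positive) and $\varepsilon<1-\phi_\infty^{-1}<1-c_+$ (which simultaneously leaves room for the large-$\gamma$ average to beat $1$ and keeps $\frac1n\sum_{\tau_i\ge m}z_iz_i^*$ well conditioned) enter in an essential way.
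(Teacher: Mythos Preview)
Your proof is correct but follows a genuinely different route from the paper. The paper argues directly: from the comparison
\[
\hat C_{(j)}\succeq m\,v(m\bar d)\,\frac1n\sum_{\substack{i\ne j\\\tau_i\ge m}}z_iz_i^*
\]
(obtained via $\psi(\tau_id_i)/d_i\ge\psi(md_i)/d_i=mv(md_i)\ge mv(m\bar d)$) one gets, for the maximizing index $j$, the inequality $\psi(m\bar d)\le\tfrac1Nz_j^*(\tfrac1n\sum_{i\ne j,\tau_i\ge m}z_iz_i^*)^{-1}z_j$, whose right-hand side is controlled by Lemma~\ref{le:uniform_conv_sum_zi} \emph{independently of $\bar d$}. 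Rewriting $\psi=\phi\circ g^{-1}$ and inverting $\phi$ yields the explicit deterministic bound $\bar d\le m^{-1}g\big(\phi^{-1}((1-\varepsilon)^{-1})\big)$; the lower bound is symmetric, using tightness of $\nu_n$ to pick $M$ with $\nu_n([0,M))>1-\varepsilon$.

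Your approach instead encodes the problem through the monotone maps $Q,q$ and the fixed point $m_N^\star(\gamma)$, which requires (i) re-running the proof of Lemma~\ref{le:convergence_gamma} at a generic fixed $\gamma$ rather than at $\gamma_N$, and (ii) carrying the uniform-in-$n$ feasibility analysis of Lemma~\ref{le:gamma_N} over to $m_N^\star(\gamma)/\gamma$ via the derivative bound on $F$. This is all sound, and you correctly identify in your last paragraph exactly where the uniformity in $c_N$ must be checked. The trade-off is that the paper's argument is shorter and entirely self-contained (only Lemma~\ref{le:uniform_conv_sum_zi} and algebra with $\phi,\psi,g$), produces an explicit bound, and avoids invoking the deterministic-equivalent machinery at all; your argument is conceptually clean but leans on heavier tools that, in the paper's logical order, were developed for a different purpose.
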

\begin{proof}
	Let us denote $d_{\max}=\max_{1\leq i \leq n}d_i$ and $d_{\min}=\min_{1\leq i \leq n}d_i$.
	Take $j\in\{1,\ldots,n\}$ arbitrary and, for $0<\varepsilon<1-\phi_\infty^{-1}<1-c_+$, take $m>0$ such that for all large $n$ a.s. $\nu_n( [m,\infty) )>1-\varepsilon$ (Assumption~\ref{ass:xi}--\ref{item:0}). Then, using the fact that $v$ is non-increasing while $\psi$ is non-decreasing,
	\begin{align}
		\hat{C}_{(j)} &\succeq \frac1n \sum_{\substack{i\neq j \\ \tau_i\geq m}} \tau_i v(\tau_i d_i) z_iz_i^* = \frac1n \sum_{\substack{i\neq j \\ \tau_i\geq m}} \frac{\psi(\tau_id_i)}{d_i} z_iz_i^* \succeq  \frac1n \sum_{\substack{i\neq j \\ \tau_i\geq m}} \frac{\psi(m d_i)}{d_i} z_iz_i^* \nonumber \\
		&= \frac1n \sum_{\substack{i\neq j \\ \tau_i\geq m}} m v(m d_i) z_iz_i^* \succeq m v(m d_{\max}) \frac1n \sum_{\substack{i\neq j \\ \tau_i\geq m}}  z_iz_i^*.
		\label{eq:ineq_md}
	\end{align}
	The right-hand side matrix is invertible for $n$ large since $|\{\tau_i\geq m\}|>nc_+>N$ for all large $n$ a.s.
	Therefore, choosing $j$ to be such that $d_{\max}=\frac1Nz_j^*\hat{C}_{(j)}^{-1}z_j$, and using $A\succeq B\succ 0 \Rightarrow B^{-1}\succeq A^{-1}$ for Hermitian $A,B$ matrices,
	\begin{align*}
		d_{\max} \leq \frac1{m v(m d_{\max})} \frac1Nz_j^*\left( \frac1n \sum_{\tau_i\geq m,i\neq j}  z_iz_i^* \right)^{-1}z_j.
	\end{align*}
	This implies
	\begin{align*}
		\psi(md_{\max}) \leq \frac1Nz_j^*\left( \frac1n \sum_{\tau_i\geq m,i\neq j}  z_iz_i^* \right)^{-1}z_j
	\end{align*}
	which can be rewritten, from the definition of $\psi$,
	\begin{align}
		\label{eq:psi_ineq}
		\phi( g^{-1}( md_{\max} )) \leq \frac{\frac1Nz_j^*\left( \frac1n \sum_{\tau_i\geq m,i\neq j}  z_iz_i^* \right)^{-1}z_j}{1+c_N\frac1Nz_j^*\left( \frac1n \sum_{\tau_i\geq m,i\neq j}  z_iz_i^* \right)^{-1}z_j}.
	\end{align}

	From Lemma~\ref{le:uniform_conv_sum_zi} in the Appendix, we then have for all large $n$ a.s.
	\begin{align*}
		\frac1Nz_j^*\left( \frac1n \sum_{\substack{i\neq j \\ \tau_i\geq m}}  z_iz_i^* \right)^{-1}z_j &= \frac1{\nu_n( [m,\infty) )} \frac1Nz_j^*\left( \frac1{|\{\tau_i\geq m\}|} \sum_{\substack{i\neq j \\ \tau_i\geq m}}  z_iz_i^* \right)^{-1}z_j \\
		&< \frac1{1-\varepsilon} \frac1{1-\frac{c_N}{1-\varepsilon}} = \frac1{1-c_N-\varepsilon}.
	\end{align*}
	Now, since $t\mapsto t/(1+c_Nt)$ is increasing, for all large $n$ a.s.
	\begin{align*}
		\frac{\frac1Nz_j^*\left( \frac1n \sum_{\substack{i\neq j \\ \tau_i\geq m}}  z_iz_i^* \right)^{-1}z_j}{1+c_N\frac1Nz_j^*\left( \frac1n \sum_{\substack{i\neq j \\ \tau_i\geq m}}  z_iz_i^* \right)^{-1}z_j}&< \frac{1}{1-c_N-\varepsilon}\frac1{1+c_N\frac1{1-c_N-\varepsilon}} = \frac1{1-\varepsilon}.
	\end{align*}
	As $\varepsilon < 1 - \phi_\infty^{-1}$, $(1-\varepsilon)^{-1}<\phi_\infty$ so that, from the inequality above, we can apply $\phi^{-1}$ on both sides of \eqref{eq:psi_ineq} to obtain, for all large $n$ a.s.
	\begin{align*}
		g^{-1}(md_{\max}) \leq \phi^{-1}\left( \frac1{1-\varepsilon}\right)
	\end{align*}
	hence
	\begin{align*}
		d_{\max} \leq \frac1m g\left( \phi^{-1}\left( \frac1{1-\varepsilon}\right) \right)
	\end{align*}
	from which $d_{\max}$ is uniformly bounded for all large $n$ a.s. Reverting all inequalities, we can proceed similarly with $d_{\min}$ by choosing $\varepsilon>0$ small and $M<\infty$ such that $\nu_n ( [0,M) )>1-\varepsilon$ for all large $n$ a.s. (which holds from the tightness of $\nu_n$). This shows that $d_{\min}$ is uniformly bounded away from zero and this completes the proof.
\end{proof}

Equipped with Lemmas~\ref{le:gamma_N}, \ref{le:convergence_gamma}, and \ref{le:di_bounded}, we are now in position to develop the core of the proof. For readability, we divide the proof in two parts. In the first part, we will assume that $\tau_1,\ldots,\tau_n$ have a uniformly bounded support. This will greatly simplify the calculus and will allow for a better understanding of the main arguments; in particular, the technical Assumption~\ref{ass:tau} will be irrelevant in this part. Then in a second part, we relax the boundedness assumption and fully exploit Assumption~\ref{ass:tau} in a more technical proof.

\subsubsection{Bounded $\tau_i$.}
\label{sec:bounded_tau}

	First assume $\tau_1,\ldots,\tau_n\leq M$ a.s. for some $M>0$. We follow here a similar path as in \citep{paper} but slightly more involved. Define
\begin{align}
	\label{eq:def_ei}
	e_i \triangleq \frac{v(\tau_i d_i)}{v(\tau_i \gamma_N)} > 0
\end{align}
with $\gamma_N$ any value given by Lemma~\ref{le:gamma_N} and with $d_i$ still defined as $d_i=\frac1Nz_i^*\hat{C}_{(i)}^{-1}z_i$. Up to labeling change, we reorder the $e_i$'s as $e_1\leq \ldots \leq e_n$. Our goal is to show that $e_1\asto 1$ and $e_n\asto 1$ (hence $\max_{1\leq i\leq n}|e_i-1|\asto 0$), which we will prove by a contradiction argument.

For any $j=1,\ldots,n$, we have
\begin{align}
	e_j &= \frac{v \left( \tau_j \frac1N z_j^* \hat{C}_{(j)}^{-1} z_j \right)}{v(\tau_j \gamma_N)} \nonumber \\
	&= \frac{v \left( \tau_j \frac1N z_j^* \left( \frac1n \sum_{i\neq j} \tau_i v(\tau_i d_i) z_iz_i^* \right)^{-1} z_j \right)}{v(\tau_j \gamma_N)} \nonumber \\
	\label{eq:bounded}
	&=\frac{v \left( \tau_j \frac1N z_j^* \left( \frac1n \sum_{i\neq j} \tau_i v(\tau_i \gamma_N) e_i z_iz_i^* \right)^{-1} z_j \right)}{v(\tau_j \gamma_N)} \\
	&\leq \frac{v \left( \tau_j \frac1N z_j^* \left( \frac1n \sum_{i\neq j} \tau_i v(\tau_i \gamma_N) e_n z_iz_i^* \right)^{-1} z_j \right)}{v(\tau_j \gamma_N)} \nonumber \\
	\label{eq:ineq_j}
	&= \frac{v \left( \frac{\tau_j}{e_n} \frac1N z_j^* \left( \frac1n \sum_{i\neq j} \tau_i v(\tau_i \gamma_N) z_iz_i^* \right)^{-1} z_j \right)}{v(\tau_j \gamma_N)}
\end{align}
where the inequality arises from $v$ being non-increasing and from \cite[Corollary~7.7.4]{HOR85}. Similarly, for each $j$,
\begin{align}
	e_j \geq \frac{v \left( \frac{\tau_j}{e_1} \frac1N z_j^* \left( \frac1n \sum_{i\neq j} \tau_i v(\tau_i \gamma_N) z_iz_i^* \right)^{-1} z_j \right)}{v(\tau_j \gamma_N)}.
\end{align}

From Lemma~\ref{le:convergence_gamma}, let now $0<\varepsilon_n<\gamma_N$, $\varepsilon_n\downarrow 0$, be such that, for all large $n$ a.s. and for all $j\leq n$,
\begin{align*}
	\gamma_N - \varepsilon_n < \frac1N z_j^* \left( \frac1n \sum_{i\neq j} \tau_i v(\tau_i \gamma_N) z_iz_i^* \right)^{-1} z_j < \gamma_N + \varepsilon_n.
\end{align*}

In particular, since $v$ is non-increasing, taking $j=n$ in \eqref{eq:ineq_j} and applying the left-hand inequality,
\begin{align*}
	e_n < \frac{v\left( e_n^{-1} \tau_n (\gamma_N - \varepsilon_n) \right)}{v(\tau_n \gamma_N)}
\end{align*}
or equivalently
\begin{align}
	\label{eq:inf0}
	\frac{e_n v(\tau_n \gamma_N)}{v\left( e_n^{-1} \tau_n (\gamma_N - \varepsilon_n) \right)} < 1.
\end{align}
By the definition of $\psi$, this can be further rewritten
\begin{align}
	\label{eq:inf1}
	\left(1-{\varepsilon_n}{\gamma_N}^{-1}\right) \frac{\psi(\tau_n \gamma_N)}{\psi\left( e_n^{-1} \tau_n \gamma_N (1-\varepsilon_n \gamma_N^{-1})\right)} < 1.
\end{align}

Assume now that, for some $\ell>0$, $e_n>1+\ell$ infinitely often and let us restrict the sequence $e_n$ to those indexes for which $e_n>1+\ell$. 

We distinguish two scenarios. First, assume that $\liminf_n \tau_{n}=0$. Then, by the definition \eqref{eq:def_ei} and since both $d_n$ and $\gamma_N$ are uniformly bounded (Lemma~\ref{le:gamma_N} and Lemma~\ref{le:di_bounded}), on some subsequence satisfying $\lim_n \tau_{n}=0$, $e_{n}\asto 1$, in contradiction with $e_{n}>1+\ell$.

We must then have $\liminf_n \tau_{n}>\tau_-$ for some $\tau_->0$ along with $\tau_n\leq M$ a.s. for some $M>0$ (bounded $\tau_i$ assumption). Then, since $\gamma_N$ is bounded and bounded away from zero for all large $n$ a.s., so is $\tau_n\gamma_N$. Considering a further subsequence over which $\tau_n\gamma_N\to x>0$ and $c_N\to c$, we then have, with $\psi_c(x)=\lim_{c_N\to c}\psi(x)$ (recall that $\psi$ depends on $c_N$ through $g$),
\begin{align}
	\lim_n \left(1-{\varepsilon_n}{\gamma_N}^{-1}\right) \frac{\psi(\tau_n \gamma_N)}{\psi\left( e_n^{-1} \tau_n \gamma_N (1-\varepsilon_n \gamma_N^{-1})\right)} \geq \frac{\psi_c(x)}{\psi_c( (1+\ell)^{-1}x)} > 1 \label{eq:fundamental_phi}
\end{align}
which contradicts \eqref{eq:inf1}. Gathering the results and reconsidering the initial sequence $e_n$ (i.e. not a subsequence) we then have, for each $\eta>0$, $e_n\leq 1+\ell$ for all large $n$ a.s.

Symmetrically, we obtain that, for some $\varepsilon_n\downarrow 0$ and for all large $n$ a.s.
\begin{align*}
	\frac{e_1 v(\tau_1 \gamma_N)}{v\left( e_1^{-1} \tau_1 (\gamma_N + \varepsilon_n) \right)} > 1.
\end{align*}
From this, we conclude similar to above that, for each $\ell>0$ small, $e_1\geq 1-\ell$, for all large $n$ a.s. so that, finally
\begin{align*}
	\max_{1\leq i\leq n} \left| e_i - 1 \right| \asto 0
\end{align*}
or, by uniform boundedness of the $\tau_i$ and $\gamma_N$,
\begin{align*}
	\max_{1\leq i\leq n} \left| v(\tau_id_i) - v(\tau_i \gamma_N) \right| \asto 0.
\end{align*}
Hence, letting $\ell>0$ and recalling that $\tau_iv(\tau_i\gamma_N)=\psi(\tau_i\gamma_N)/\gamma_N$, for all large $n$ a.s.
\begin{align}
	\label{eq:sandwich}
	(1-\ell) \frac1n \sum_{i=1}^n \frac{\psi(\tau_i\gamma_N)}{\gamma_N} z_iz_i^* \preceq \frac1n \sum_{i=1}^n v(\tau_i d_i) \tau_i z_iz_i^* \preceq (1+\ell) \frac1n \sum_{i=1}^n \frac{\psi(\tau_i\gamma_N)}{\gamma_N} z_iz_i^*.
\end{align}
Therefore, since $\gamma_N>\gamma_-$ and $\left\Vert\frac1n\sum_{i=1}^nz_iz_i^*\right\Vert < (1+\sqrt{c_+})^2$ for all large $n$ a.s. \citep{SIL98},
\begin{align*}
	\left\Vert \hat{C}_N - \hat{S}_N \right\Vert \leq 2\ell (1+\sqrt{c_+})^2 \frac{\psi_\infty}{\gamma_-}
\end{align*}
where $\hat{S}_N=\gamma_N^{-1} \frac1n \sum_{i=1}^n \psi(\tau_i \gamma_N) z_iz_i^*$. Since $\ell$ is arbitrary, the difference tends to zero a.s. as $n\to\infty$, which concludes the proof for $\tau_i<M$ a.s. and for $C_N=I_N$.

If $C_N\neq I_N$ is positive definite, remark simply that neither $d_i$ nor $\gamma_N$ are affected in their values, so that the effect of $C_N$ first appears in \eqref{eq:sandwich} with $z_i$ having $C_N\neq I_N$ as covariance matrix. But then, in this case, since $\Vert \frac1n\sum_{i=1}^n z_iz_i^* \Vert<(1+\sqrt{c_+})^2\limsup_N \Vert C_N\Vert<\infty$ (Assumption~\ref{ass:xi}), the last arguments still hold true and the result is also proved for these $C_N$.

\bigskip

Note the importance of the assumption on $\phi$ being increasing and not simply non-decreasing (as in \citep{MAR76}) to ensure that \eqref{eq:fundamental_phi} is a strict inequality. If this were to be replaced by ``$\geq 1$'', no contradiction with \eqref{eq:inf1} could be evoked. There does not seem to be any easy way to work this limitation around. Similar reasons explain why Tyler robust estimator discussed in Section~\ref{sec:conclusion} cannot be analyzed in the same way as Maronna estimator. All the same, when $\tau_1,\ldots,\tau_n$ have unbounded support with growing $n$, the left-hand side of \eqref{eq:fundamental_phi} may equal one provided $\limsup_n\tau_n=\infty$, which is not excluded. For this reason, a specific treatment is necessary where the set of $\{\tau_i\}_{i=1}^n$ is split into a large bounded set of $\tau_i$ and a small set of large $\tau_i$. This is the approach followed in the second part of the proof below.

\subsubsection{Unbounded $\tau_i$.}
\label{sec:unbounded_tau}

	We now relax the boundedness assumption on the support of the distribution of $\tau_1$ and use Assumption~\ref{ass:tau} instead.

	Since $\{\nu_n\}_{n=1}^\infty$ is tight, we can exhibit pairs $(\eta,M_\eta)$ with $\eta\downarrow 0$ as $M_\eta\uparrow \infty$ such that, for all large $n$ a.s. $\nu_n( (M_\eta,\infty) )<\eta$. Let us fix such a pair $(\eta,M_\eta)$ with $\eta$ small and restrict ourselves to a subsequence where $\nu_n( (M_\eta,\infty) )<\eta$ for all $n$. Denote $\mathcal C_\eta=\{i,\tau_i\leq M_\eta\}$ with cardinality $|\mathcal C_\eta|/n=1-\nu_n( (M_\eta,\infty) )$.

	We follow the same steps as in the previous proof but differentiating between indexes in $\mathcal C_\eta$ and indexes in $\mathcal C_\eta^c$. Also we denote
	\begin{align*}
		e^\eta_i \triangleq \frac{v(\tau_i d_i)}{v(\tau_i \gamma_N^\eta)}
	\end{align*}
	where $\gamma_N^\eta$ is the unique positive solution to the equation in $\gamma$
	\begin{align*}
		1 = \frac1n \sum_{i\in \mathcal C_\eta} \frac{\psi(\tau_i\gamma)}{1+c_N\psi(\tau_i \gamma)}.
	\end{align*}

	Recall first from Remark~\ref{rem:lemma1_eta} that the conclusions of Lemma~\ref{le:gamma_N} are still valid and importantly in what follows, that $\gamma_-<\gamma_N^\eta<\gamma_+$ for some $\gamma_-,\gamma_+>0$, for all large $N$ irrespective of $\eta<\eta_0$ for some $\eta_0$ small. This uniform control of $\gamma_N^\eta$ with respect to $\eta$ plays a key role here. For the moment, we do not make explicit the sufficiently small value of $\eta_0$ that is needed in the following; all what will matter if that we can always choose $\eta$ arbitrarily small from here.

	Let $j\in\mathcal C_\eta$ and denote $\psi_\infty$ any upper bound on $\psi^N_\infty$ for all $N$. Then, similar to \eqref{eq:bounded}, with $e^\eta_{\bar 1}=\min_{i\in\mathcal C_\eta} \{e^\eta_i\}$ and $e^\eta_{\bar n}=\max_{i\in\mathcal C_\eta} \{e^\eta_i\}$, 
	\begin{align*}
		e^\eta_j &= \frac{v\left( \tau_j \frac1N z_j^* \left( \frac1n \sum_{ \substack{i\in \mathcal C_\eta \\ i \neq j} } \tau_i v(\tau_i \gamma^\eta_N) e^\eta_i z_iz_i^* + \frac1n \sum_{ \substack{i\in \mathcal C_\eta^c} } \tau_i v(\tau_i d_i) z_iz_i^* \right)^{-1} z_j \right)}{v(\tau_j \gamma^\eta_N)} \\
		&\leq \frac{v\left( \tau_j \frac1N z_j^* \left( \frac1n \sum_{ \substack{i\in \mathcal C_\eta \\ i \neq j} } \tau_i v(\tau_i \gamma^\eta_N) e^\eta_{\bar n} z_iz_i^* + \frac1n \frac{\psi_\infty}{d_-}  \sum_{i \in \mathcal C_\eta^c} z_iz_i^* \right)^{-1}z_j \right)}{v(\tau_j \gamma^\eta_N)} \\
		&= \frac{v\left( \frac{\tau_j}{e^\eta_{\bar n}} \frac1N z_j^* \left( \frac1n \sum_{ \substack{i\in \mathcal C_\eta \\ i \neq j} } \tau_i v(\tau_i \gamma^\eta_N) z_iz_i^* + \frac1n \frac{\psi_\infty}{ e^\eta_{\bar n}d_-} \sum_{i \in \mathcal C_\eta^c} z_iz_i^* \right)^{-1}z_j \right)}{v(\tau_j \gamma^\eta_N)}
	\end{align*}
	where the first inequality uses $d_i>d_-$ for all large $n$ a.s (Lemma~\ref{le:di_bounded}). Since $e^\eta_{\bar n}=\frac{v(\tau_{\bar n}d_{\bar n})}{v(\tau_{\bar n}\gamma_N^\eta)} = \frac{\psi(\tau_{\bar n} d_{\bar n})}{\psi(\tau_{\bar n} \gamma^\eta_N)}\frac{\gamma^\eta_N}{d_{\bar n}}$, with the bounds derived previously (Remark~\ref{rem:lemma1_eta} and Lemma~\ref{le:di_bounded}), $e^\eta_{\bar n}$ is almost surely bounded and bounded away from zero for all large $n$ a.s., irrespective of $\eta$ small enough (if $\liminf_n \tau_{\bar n}=0$, the first equality ensures $\liminf_n e^\eta_{\bar n}>0$ while if $\limsup_n \tau_{\bar n}=\infty$, the second equality ensures $\limsup_n e^\eta_{\bar n}<\infty$). Thus, in particular, $e^\eta_{\bar n}>e_-$ for some $e_->0$ for all large $n$ a.s. From this observation, for all large $n$ a.s.
	\begin{align}
		e^\eta_j &\leq \frac{v\left( \frac{\tau_j}{e^\eta_{\bar n}} \frac1N z_j^* \left( \frac1n \sum_{ \substack{i\in \mathcal C_\eta \\ i \neq j} } \tau_i v(\tau_i \gamma^\eta_N) z_iz_i^* + \frac1n \frac{\psi_\infty}{d_-e_-} \sum_{i \in \mathcal C_\eta^c} z_iz_i^* \right)^{-1}z_j \right)}{v(\tau_j \gamma^\eta_N)} \nonumber \\
		\label{eq:partial_C}		&= \frac{v\left( \frac{\tau_j}{e^\eta_{\bar n}} \left[ \frac1N z_j^* \left( \frac1n \sum_{ \substack{i\in \mathcal C_\eta \\ i \neq j} } \tau_i v(\tau_i \gamma^\eta_N) z_iz_i^*\right)^{-1}z_j +  w_{j,n} \right] \right) }{v(\tau_j \gamma^\eta_N)}
	\end{align}
	where we defined
	\begin{align*}
		w_{j,n} &\triangleq \frac1N z_j^* \left( A_{\eta,(j)} + B_\eta \right)^{-1}z_j - \frac1N z_j^* A_{\eta,(j)}^{-1}z_j
	\end{align*}
	with
	\begin{align*}
		A_{\eta,(j)} \triangleq \frac1n \sum_{ \substack{i\in \mathcal C_\eta \\ i \neq j} } \tau_i v(\tau_i \gamma^\eta_N) z_iz_i^*,\quad B_\eta \triangleq \frac1n \frac{\psi_\infty}{d_-e_-} \sum_{i \in \mathcal C_\eta^c} z_iz_i^*.
	\end{align*}
	Note that $A_{\eta,(j)}^{-1}$ is well defined as $A_{\eta,(j)}$ is invertible for all large $n$ a.s. provided $\eta$ is small enough. Working on $w_{j,n}$, and using in particular $|x^*y|\leq \sqrt{x^*x}\sqrt{y^*y}$ for vectors $x,y$, we obtain 
	\begin{align*}
		|w_{j,n}| &= \left| \frac1N z_j^* \left( A_{\eta,(j)} + B_\eta \right)^{-1} B_\eta A_{\eta,(j)}^{-1} z_j \right| \\
		&\leq \sqrt{\frac1N z_j^* (A_{\eta,(j)}+B_\eta)^{-1} B_\eta (A_{\eta,(j)}+B_\eta)^{-1} z_j} \sqrt{\frac1N z_j^* A_{\eta,(j)}^{-1} B_\eta A_{\eta,(j)}^{-1} z_j}.
	\end{align*}
	Clearly $A_{\eta,(j)}+B_\eta\succeq A_{\eta,(j)}$ and, for some $\kappa>0$, and for all $j\in\mathcal C_\eta$, $\lambda_1(A_{\eta,(j)})>\kappa>0$ almost surely. Indeed, with the same derivation as \eqref{eq:ineq_md}, for any $m>0$ satisfying $\nu_n([m,M_\eta])>c_+$ for all $n$ a.s. (this may require $M_\eta$ large enough), $\lambda_1(A_{\eta,(j)}) \geq mv(m\gamma_+) \lambda_1( \frac1n \sum_{\tau_i \in [m,M_\eta],i\neq j} z_iz_i^*)$ away from zero for all large $n$ a.s., independently of $\eta$ small enough (Lemma~\ref{le:uniform_conv_sum_zi}). Now, with the same approach as in the proof of Lemma~\ref{le:convergence_gamma}, i.e. exploiting $z_j=\sqrt{\bar N}\tilde{z}_j/\Vert \tilde{y}_j\Vert$, Lemma~\ref{le:trace_lemma}, Lemma~\ref{le:rank1perturbation}, the union bound on the $j\in\mathcal C_\eta$, and \citep{SIL98} (the latter ensuring that $B_\eta$ has bounded spectral norm a.s.),
	\begin{align*}
		\max_{j\in \mathcal C_\eta}\left|\frac1Nz_j^* A_{\eta,(j)}^{-1} B_\eta A_{\eta,(j)}^{-1} z_j - \frac1N \tr A_\eta^{-1} B_\eta A_\eta^{-1} \right| \asto 0
	\end{align*}
	with $A_\eta=A_{\eta,(j)}+\frac1n\tau_jv(\tau_j\gamma_N^\eta)z_jz_j^*$. From $|\tr XY|\leq \Vert X\Vert \tr Y$ for positive definite $Y$, 
	\begin{align*}
		\frac1N\tr A_\eta^{-1}B_\eta A_\eta^{-1} \leq \Vert A_\eta^{-2} \Vert \frac1N\tr B_\eta \leq \frac1{\kappa^2} \frac{\psi_\infty}{d_-e_-}\frac1n\sum_{i\in\mathcal C_\eta^c} \frac{\Vert z_i\Vert^2}N
	\end{align*}
	where $\frac1n\sum_{i\in\mathcal C_\eta^c} \frac{\Vert z_i\Vert^2}N - \frac{|\mathcal C^c_\eta|}n \asto 0$ since $\max_{1\leq j\leq n} \left|\frac{\Vert z_j\Vert^2}N-1\right| \asto 0$. Recalling that $\frac{|\mathcal C^c_\eta|}n=\nu_n( (M_\eta,\infty) )$, we then have for all large $n$ a.s. 
	\begin{align*}
		\max_{j\in\mathcal C_\eta} \frac1Nz_j^* A_{\eta,(j)}^{-1} B_\eta A_{\eta,(j)}^{-1} z_j &< \frac{2\nu_n( (M_\eta,\infty) )}{\kappa^2}\frac{\psi_\infty}{d_-e_-}.
	\end{align*}
	The same reasoning holds for $\frac1Nz_j^* (A_{\eta,(j)}+B_\eta)^{-1} B_\eta (A_{\eta,(j)}+B_\eta)^{-1} z_j$. Finally, we conclude
	\begin{align}
		\label{eq:wn_bound}
		\max_{j\in \mathcal C_\eta}|w_{j,n}| &\leq K\nu_n( (M_\eta,\infty) )
	\end{align}
	for all large $n$ a.s. with $K>0$ constant, independent of $\eta$.  

	Now that $w_{j,n}$ is controlled for all $j\in\mathcal C_\eta$, we can proceed similar to the proof in the bounded $\tau_i$ case. First, for any fixed $\eta>0$ small enough, Remark~\ref{rem:lemma2_eta} ensures that there exists a sequence $\varepsilon^\eta_n\downarrow 0$, such that a.s.
	\begin{align}
		\label{eq:conv_gamma_C}
		\max_{j\in \mathcal C_\eta} \left| \frac1N z_j^* \left( \frac1n \sum_{ i\in \mathcal C_\eta,i \neq j } \tau_i v(\tau_i \gamma^\eta_N) z_iz_i^*\right)^{-1}z_j - \gamma_N^\eta\right| \leq \varepsilon^\eta_n.
	\end{align}
	Combining \eqref{eq:partial_C}, \eqref{eq:wn_bound}, and \eqref{eq:ej_bounded_v}, we then have for all large $n$ a.s. and for all $j\in\mathcal C_\eta$
	\begin{align}
		\label{eq:ej_bounded_v}
		e^\eta_j \leq \frac{v\left( \frac{\tau_j}{e^\eta_{\bar n}} \left( \gamma_N^\eta - \varepsilon^\eta_n - K \nu_n( (M_\eta,\infty) ) \right) \right)}{v(\tau_j \gamma^\eta_N)}
	\end{align}
	which, for $j=\bar n$, is
	\begin{align*}
		e^\eta_{\bar n} \leq \frac{v\left( \frac{\tau_{\bar n}}{e^\eta_{\bar n}} \left( \gamma_N^\eta - \varepsilon^\eta_n - K \nu_n( (M_\eta,\infty) ) \right) \right)}{v(\tau_{\bar n} \gamma^\eta_N)}.
	\end{align*}
	Using the definition of $\psi$, this reads equivalently
	\begin{align*}
		\left( 1 - \frac{\varepsilon^\eta_n + K\nu_n( (M_\eta,\infty) )}{\gamma_N^\eta} \right) \frac{\psi(\tau_{\bar n} \gamma_N^\eta)}{\psi\left( (e^\eta_{\bar n})^{-1} \tau_{\bar n} \gamma_N^\eta \left( 1 - \frac{\varepsilon^\eta_n + K\nu_n( (M_\eta,\infty) )}{\gamma_N^\eta} \right) \right)}<1
	\end{align*}
	which implies, from the growth of $\psi$,
	\begin{align*}
		\left( 1 - \frac{\varepsilon^\eta_n + K\nu_n( (M_\eta,\infty) )}{\gamma_N^\eta} \right) \frac{\psi(\tau_{\bar n} \gamma_N^\eta)}{\psi\left( (e^\eta_{\bar n})^{-1} \tau_{\bar n} \gamma_N^\eta\right)}<1.
	\end{align*}

	Adding $\frac{\varepsilon^\eta_n + K\nu_n( (M_\eta,\infty))}{\gamma_N^\eta}-1$ on both sides, this further reads
	\begin{align*}
		\left( 1 - \frac{\varepsilon^\eta_n + K\nu_n( (M_\eta,\infty))}{\gamma_N^\eta} \right) \frac{\psi(\tau_{\bar n} \gamma_N^\eta)-\psi\left( (e^\eta_{\bar n})^{-1} \tau_{\bar n} \gamma_N^\eta \right)}{\psi\left( (e^\eta_{\bar n})^{-1} \tau_{\bar n} \gamma_N^\eta \right)}< \frac{\varepsilon^\eta_n + K\nu_n( (M_\eta,\infty))}{\gamma_N^\eta}.
	\end{align*}
	or equivalently, if $\eta$ is taken small enough (recalling that $\gamma_N^\eta>\gamma_-$ uniformly on $\eta$ small),
	\begin{align}
		\label{eq:unbounded_fund_ineq}
		\frac{\psi(\tau_{\bar n} \gamma_N^\eta)-\psi\left( (e^\eta_{\bar n})^{-1} \tau_{\bar n} \gamma_N^\eta \right)}{\varepsilon^\eta_n + K\nu_n( (M_\eta,\infty))}< \frac{\psi\left( (e^\eta_{\bar n})^{-1} \tau_{\bar n} \gamma_N^\eta \right)}{\gamma_N^\eta\left( 1 - \frac{\varepsilon^\eta_n + K\nu_n( (M_\eta,\infty))}{\gamma_N^\eta} \right) } < \frac{2\psi_\infty}{\gamma_-}
	\end{align}
	where the right-most bound holds for all large $n$ a.s. provided $\eta$ is chosen small enough.

	
	Assume $\limsup_n e^\eta_{\bar n}>1+\ell$ for some $\ell>0$. Then one must have $\liminf_n \tau_{\bar n}>\tau_-$ for \eqref{eq:ej_bounded_v} to remain valid, with $\tau_->0$ independent of $\eta$ small since $\gamma_-<\gamma_N^\eta<\gamma_+$ for all $n$ large a.s., both bounds being independent of $\eta$. Since $\tau_{\bar n}\gamma_N^\eta$ belongs to $[\tau_-\gamma_-,M_\eta \gamma_+]$ for all large $N$ a.s., taking the limit of \eqref{eq:unbounded_fund_ineq} over some converging subsequence over which $\tau_{\bar n}\gamma_N^\eta\to x^\eta\in [\tau_-\gamma_-,M_\eta \gamma_+]$, $c_N\to c$, and $\nu_n( (M_\eta,\infty))$ converges, ensures that
	\begin{align}
		\label{eq:min_x_C}
		\frac{\psi_c(x^\eta)-\psi_c\left( \frac1{1+\ell} x^\eta \right)}{\lim_n \nu_n( (M_\eta,\infty))}\leq K'
	\end{align}
	for $K'>0$ independent of $\eta$, with $\psi_c=\lim_{c_N\to c}\psi$.

	We now operate on $\eta$. If $\limsup_{\eta\to 0} x^\eta < \infty$, the left-hand side in \eqref{eq:min_x_C} diverges to $\infty$ as $\eta\to 0$ so that, starting with an $\eta$ sufficiently small and taking the limit over $n$ on the subsequence under consideration raises a contradiction. If instead $\limsup_{\eta\to 0} x^\eta = \infty$, then, since $x^\eta\leq M_\eta\gamma_+$,
	\begin{align*}
		\frac{\psi_c(x^\eta)-\psi_c\left( \frac1{1+\ell} x^\eta \right)}{\lim_n \nu_n( (M_\eta,\infty) )} \geq \frac{\psi_c(x^\eta)-\psi_c\left( \frac1{1+\ell} x^\eta \right)}{\lim_n\nu_n( (\frac{x^\eta}{\gamma_+},\infty) )}.
	\end{align*}
	Call $y^\eta=g^{-1}(x^\eta)$. Then, after some calculus,
	\begin{align*}
		\frac{\psi_c(x^\eta)-\psi_c\left( \frac1{1+\ell} x^\eta \right)}{\lim_n\nu_n ( (\frac{x^\eta}{\gamma_+},\infty) )} &= \frac{\phi(y^\eta)-\phi\left(\frac1{1+\ell}y^\eta\right)}{(1-c\phi(y^\eta))(1-c\phi(\frac{y^\eta}{1+\ell}))\lim_n \nu_n( (\frac{y^\eta}{\gamma_+(1-c\phi(y^\eta))},\infty) )} \\
		&> \frac{\phi(y^\eta)-\phi\left(\frac1{1+\ell}y^\eta\right)}{(1-c_+\phi_\infty)^2 \lim_n \nu_n( (\frac{y^\eta}{\gamma_+},\infty) )}.
	\end{align*}
	Since $y^\eta\to\infty$ as $x^\eta\to\infty$, from Assumption~\ref{ass:tau}, the right-hand side must go to $\infty$ as $x^\eta\to\infty$, or equivalently as $\eta\to 0$. Therefore, taking $\eta$ sufficiently small from the beginning and then bringing $n$ large on the subsequence under study leads to a contradiction. Consequently, we must have $\limsup_n e^\eta_{\bar n}\leq 1+\ell$ a.s. A similar reasoning shows that $\liminf_n e^\eta_{\bar 1}\geq 1-\ell$ a.s., for any given $\ell>0$. We conclude that
	\begin{align*}
		\max_{j\in\mathcal C_\eta} \left| e^\eta_j - 1 \right| \asto 0.
	\end{align*}
	
	We now have to deal with $e^\eta_j$ for $j\in \mathcal C_\eta^c$. For such a $j$,
\begin{align*}
	d_j = \frac1N z_j^* \left( \frac1n \sum_{i\in\mathcal C_\eta} \tau_i v(\tau_i \gamma^\eta_N) e^\eta_i z_iz_i^* + \frac1n \sum_{i\in\mathcal C_\eta^c,i\neq j} \frac{\psi(\tau_i d_i)}{d_i} z_iz_i^*  \right)^{-1}z_j.
\end{align*}
But then, from the same reasoning as with the $w_{j,n}$ above, we have that
\begin{align*}
	\max_{j\in\mathcal C_\eta^c} \left| d_j - \frac1N z_j^* (A_\eta')^{-1}z_j \right|
	&= \max_{j\in\mathcal C_\eta^c} \left| \frac1Nz_j^* (A_\eta'+B_{\eta,(j)}')^{-1}B_{\eta,(j)}'(A_\eta')^{-1}z_j \right|
\end{align*}
with 
\begin{align*}
	A_\eta' = \frac1n \sum_{i\in\mathcal C_\eta} \tau_i v(\tau_i \gamma^\eta_N) e^\eta_i z_iz_i^*, \quad B_{\eta,(j)}' = \frac1n \sum_{i\in \mathcal C_\eta^c,i\neq j} \frac{\psi(\tau_i d_i)}{d_i} z_iz_i^*
\end{align*}
which is easily bounded (using the fact that both $\psi(\tau_i d_i)/d_i$, $i\in\{1,\ldots,n\}$, and $e_i^\eta$, $i\in\mathcal C_\eta$, are bounded and bounded away from zero for all large $n$ a.s., Lemma~\ref{le:di_bounded}) as
\begin{align*}
	\max_{j\in\mathcal C_\eta^c} \left| d_j - \frac1N z_j^* (A_\eta')^{-1}z_j \right| &< K\nu_n( (M_\eta,\infty) ) < K\eta
\end{align*}
for some $K>0$ for all large $n$ a.s. (see reasoning leading to \eqref{eq:wn_bound}).

Moreover, since $\max_{i\in\mathcal C_\eta}|e^\eta_i-1|\asto 0$, using $\tau_iv(\tau_i\gamma_N^\eta)\leq \psi_\infty\gamma_-^{-1}$ for all large $n$ a.s. and $\Vert \frac1n\sum_{i\in \mathcal C_\eta}z_iz_i^*\Vert$ a.s. bounded,
\begin{align*}
	\max_{j\in\mathcal C_\eta^c} \left| d_j - \frac1N z_j^* \left( \frac1n \sum_{i\in\mathcal C_\eta} \tau_i v(\tau_i \gamma^\eta_N) z_iz_i^* \right)^{-1}z_j \right| &< 2K\eta
\end{align*}
which further implies from Remark~\ref{rem:lemma2_eta} that for all large $n$ a.s. and for all $j\in\mathcal C_\eta^c$,
\begin{align*}
	\gamma_N^\eta - 2K\eta \leq d_j \leq \gamma_N^\eta + 2K\eta.
\end{align*}
Using the definition $e^\eta_j=\frac{\psi(\tau_jd_j)}{\psi(\tau_j\gamma_N^\eta)}\frac{\gamma_N^\eta}{d_j}$, the uniform bounds on $\gamma_n^\eta$, and the continuous growth of $\psi$ shows finally that, a.s.
\begin{align*}
	\limsup_n \max_{j\in\mathcal C_\eta^c} \left\{ \left| e^\eta_j - 1 \right| \right\} \leq \eta'
\end{align*}
for some $\eta'>0$ with $\eta'\to 0$ as $\eta\to 0$.

Gathering the results for $j\in\mathcal C_\eta$ and $j\in\mathcal C_\eta^c$, we therefore conclude that, for each $\ell>0$, there exists $\eta>0$ small enough such that a.s.
\begin{align*}
	1-\ell < \liminf_n \min_{1\leq i\leq n} e^\eta_i \leq \limsup_n \max_{1\leq i\leq n} e^\eta_i < 1+\ell.
\end{align*}

For such $\eta$ small, we then have, by definition of $e_i^\eta$ and from $\tau_iv(\tau_i\gamma_N^\eta)=\psi(\tau_i\gamma_N^\eta)/\gamma_N^\eta$,
	\begin{align}
		\label{eq:gamma_eta_conc}
		(1-\ell) \frac1n \sum_{i=1}^n \frac{\psi(\tau_i \gamma_N^\eta)}{\gamma_N^\eta} z_iz_i^* \preceq \frac1n \sum_{i=1}^n v(\tau_i d_i) \tau_i z_iz_i^* \preceq (1+\ell) \frac1n \sum_{i=1}^n \frac{\psi(\tau_i \gamma_N^\eta)}{\gamma_N^\eta} z_iz_i^*.
	\end{align}

It now remains to show that, for each $\varepsilon>0$, there exists $\eta>0$ for which $|\gamma_N^\eta - \gamma_N| < \varepsilon$ for all $n$ large a.s.
For this, observe that, by definition of $\gamma_N$ and $\gamma_N^\eta$,
	\begin{align*}
		1 = \frac1n \sum_{i\in\mathcal C_\eta} \frac{\psi(\tau_i \gamma^\eta_N)}{1+c_N \psi(\tau_i \gamma^\eta_N)} = \frac1n \sum_{i=1}^n \frac{\psi(\tau_i \gamma_N)}{1+c_N \psi(\tau_i \gamma_N)}
	\end{align*}
	so that, since $\psi/(1+c_N\psi)$ is increasing, we obtain $\gamma_N\leq \gamma_N^\eta$ and 
	\begin{align*}
		\frac1n \sum_{i\in \mathcal C_\eta^c} \frac{\psi(\tau_i\gamma_N)}{1+c_N\psi(\tau_i\gamma_N)} &= \frac1n\sum_{i\in\mathcal C_\eta} \frac{\psi(\tau_i \gamma^\eta_N)-\psi(\tau_i \gamma_N)}{(1+c_N\psi(\tau_i\gamma_N))(1+c_N\psi(\tau_i\gamma_N^\eta))} \geq 0.
	\end{align*}
	Take an interval $[m,M]$, $M<M_\eta$ (chosen once for all, independently of $M_\eta$ large), with $\nu_n([m,M])>\kappa>0$ for all large $n$ a.s. (possible from Assumption~\ref{ass:xi}--\ref{item:0}). Then we can further write
	\begin{align*}
		\frac1n \sum_{i\in \mathcal C_\eta^c} \frac{\psi(\tau_i\gamma_N)}{1+c_N\psi(\tau_i\gamma_N)} &\geq \frac1{(1+c_+\psi_\infty)^2}\frac1n\sum_{\tau_i \in [m,M]} \left(\psi(\tau_i \gamma^\eta_N)-\psi(\tau_i \gamma_N) \right) \\
		&\geq \frac{\kappa}{2(1+c_+\psi_\infty)^2} \min_{x\in [m,M]}\left(\psi( x \gamma^\eta_N)-\psi( x \gamma_N) \right)
	\end{align*}
	with the second inequality valid for all large $n$ a.s. Now, for sufficiently small $\eta$, the left-hand side can be made arbitrarily small. Since $\gamma_N$ and $\gamma_N^\eta$ are uniformly bounded and bounded away from zero (irrespective of $\eta$ small), if $|\gamma^\eta_N - \gamma_N|$ were uniformly away from zero for all $\eta$ small, so would be the right-hand side, which is in contradiction with our previous statement. Therefore, for each $\varepsilon>0$, one can choose $\eta$ so that $|\gamma_N-\gamma_N^\eta|<\varepsilon$ for all $n$ large a.s.

	Now, by uniform continuity of $\psi$ on bounded intervals along with the fact that $\psi(x)\uparrow \psi_\infty$, from \eqref{eq:gamma_eta_conc}, taking $\eta$ small enough, for all large $n$ a.s.
	\begin{align}
		(1-\ell)^2 \frac1n \sum_{i=1}^n \frac{\psi\left( \tau_i \gamma_N \right)}{\gamma_N} z_iz_i^* \preceq \hat{C}_N \preceq (1+\ell)^2 \frac1n \sum_{i=1}^n \frac{\psi\left( \tau_i \gamma_N \right)}{\gamma_N} z_iz_i^*
	\end{align}
	which therefore implies, with the same arguments as in the case $\tau_i$ bounded, that $\Vert \hat{C}_N-\hat{S}_N\Vert\asto 0$, when $C_N=I_N$. The arguments of the case $\tau_i$ bounded still hold for $C_N\neq I_N$ satisfying Assumption~\ref{ass:xi}-\ref{item:CN}). This completes the proof.

\section{Conclusion}
\label{sec:conclusion}
This article introduces a large dimensional analysis for robust estimators of scatter matrices of the Maronna-type from elliptically distributed samples. We specifically showed that, under mild assumptions, the Maronna estimator behaves similar to a classical sample covariance matrix model as both the population and sample sizes grow large. This study opens new roads in the analysis of signal processing methods based on robust scatter matrix estimation. In a similar manner as in \cite[Theorem~6]{MAR76}, it is believed that second order statistics for well behaved functionals of $\hat{C}_N$ can be further analyzed, which would provide more information on the asymptotic fluctuations of $\hat{C}_N-\hat{S}_N$. The mathematical treatment developed in the proofs of our present results however shows some strong limitations for hypothetical extensions to other robust scatter matrix estimates. In particular, the important Tyler robust estimator \citep{TYL87,PAS08}, given by the unique solution (up to a scale factor) to \eqref{eq:def_hatCN} for $u(x)=1/x$, cannot be analyzed from the present method which relies essentially on $\phi(x)=xu(x)$ being increasing. Although extensive simulations suggest that similar conclusions hold for Tyler estimator, there is to this day no approach to tackle this problem. 

\appendix

\section{Some Lemmas}

\begin{lemma}[Rank-one perturbation] \cite[Lemma~2.6]{SIL95}
	\label{le:rank1perturbation}
	Let $v\in\CC^N$, $A,B\in\CC^{N\times N}$ nonnegative definite, and $x>0$. Then
	\begin{align*}
		\left| \tr B\left(A+vv^*+xI_N\right)^{-1} - \tr B \left(A+xI_N\right)^{-1} \right| \leq x^{-1}\Vert B\Vert.
	\end{align*}
\end{lemma}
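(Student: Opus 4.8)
The plan is to reduce the estimate to the Sherman--Morrison rank-one update formula. I would set $M\triangleq A+xI_N$; since $A\succeq 0$ and $x>0$, $M$ is Hermitian positive definite with $M\succeq xI_N$, hence $M^{-1}$ exists and, multiplying $M^{-1}\preceq x^{-1}I_N$ on both sides by $M^{-1/2}$, one gets $M^{-2}\preceq x^{-1}M^{-1}$. Writing $A+vv^*+xI_N=M+vv^*$, Sherman--Morrison yields
\begin{align*}
	\left(M+vv^*\right)^{-1} = M^{-1} - \frac{M^{-1}vv^*M^{-1}}{1+v^*M^{-1}v},
\end{align*}
so, taking the trace against $B$ and using $\tr(BM^{-1}vv^*M^{-1})=v^*M^{-1}BM^{-1}v$, the difference in the statement equals $-\,v^*M^{-1}BM^{-1}v/(1+v^*M^{-1}v)$.

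Next I would bound this ratio. Setting $w=M^{-1}v$ and using $B\succeq 0$, one has $v^*M^{-1}BM^{-1}v=w^*Bw\geq 0$, so the difference has a fixed sign and it suffices to bound $w^*Bw/(1+v^*M^{-1}v)$ from above by $x^{-1}\Vert B\Vert$. From $B\preceq\Vert B\Vert I_N$ I get $w^*Bw\leq\Vert B\Vert\,w^*w=\Vert B\Vert\,v^*M^{-2}v$, and from $M^{-2}\preceq x^{-1}M^{-1}$ I get $v^*M^{-2}v\leq x^{-1}v^*M^{-1}v$. Combining, the ratio is at most $\Vert B\Vert x^{-1}\cdot\frac{v^*M^{-1}v}{1+v^*M^{-1}v}\leq\Vert B\Vert x^{-1}$, the last step because $t\mapsto t/(1+t)$ is at most $1$ on $[0,\infty)$ and $v^*M^{-1}v\geq 0$. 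This is exactly the asserted inequality.

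There is essentially no obstacle here: the only mild point is the degenerate case $v=0$ (equivalently $v^*M^{-1}v=0$, since $M^{-1}\succ 0$), for which both sides vanish, and the matrix facts invoked ($B\preceq\Vert B\Vert I_N$, $M\succeq xI_N\Rightarrow M^{-2}\preceq x^{-1}M^{-1}$, and monotonicity of $t/(1+t)$) are all elementary. Alternatively, one could bypass Sherman--Morrison and argue from the resolvent identity $(M+vv^*)^{-1}-M^{-1}=-(M+vv^*)^{-1}vv^*M^{-1}$ together with Cauchy--Schwarz-type bounds, but the Sherman--Morrison route is the most economical.
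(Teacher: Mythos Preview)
Your proof is correct. The paper does not actually supply its own proof of this lemma; it is simply quoted from \cite[Lemma~2.6]{SIL95} in the appendix as a standard tool, so there is no in-paper argument to compare against. Your Sherman--Morrison derivation is the standard route and matches the original proof in \cite{SIL95}.
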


\begin{lemma}[Trace lemma] \cite[Lemma~B.26]{SIL06}
	\label{le:trace_lemma}
	Let $A\in\CC^{N\times N}$ be non-random and $y=[y_1,\ldots,y_N]^\trans\in\CC^N$ be a vector of independent entries with $\EE [y_i]=0$, $\EE[|y_i|^2]=1$, and $\EE[|y_i|^\ell]\leq \zeta_\ell$ for all $\ell\leq 2p$, with $p\geq 2$. Then,
	\begin{align*}
		\EE\left[\left| y^* Ay - \tr A \right|^p\right]\leq C_p \left( (\zeta_4 \tr AA^*)^{\frac{p}2} + \zeta_{2p} \tr(AA^*)^{\frac{p}2} \right)
	\end{align*}
	for $C_p$ a constant depending on $p$ only.
\end{lemma}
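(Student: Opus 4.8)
This is the classical $p$-th moment bound for a quadratic form in independent entries, and I would prove it by separating the diagonal and off-diagonal contributions. Write
\begin{align*}
	y^*Ay-\tr A=\sum_{i=1}^N A_{ii}\bigl(|y_i|^2-1\bigr)+\sum_{i\neq j}\bar y_i A_{ij}y_j\triangleq D+O,
\end{align*}
so that $\EE|y^*Ay-\tr A|^p\leq 2^{p-1}(\EE|D|^p+\EE|O|^p)$ and it suffices to bound each piece. The summands of $D$ are independent and centred, so Rosenthal's inequality gives $\EE|D|^p\leq C_p\bigl[\bigl(\sum_i|A_{ii}|^2\,\EE(|y_i|^2-1)^2\bigr)^{p/2}+\sum_i|A_{ii}|^p\,\EE\bigl||y_i|^2-1\bigr|^p\bigr]$. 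Using $\EE(|y_i|^2-1)^2\leq\zeta_4$ and $\EE\bigl||y_i|^2-1\bigr|^p\leq C_p\zeta_{2p}$, together with the elementary matrix inequalities $\sum_i|A_{ii}|^2\leq\tr AA^*$ and $\sum_i|A_{ii}|^p\leq\sum_{i,j}|A_{ij}|^p\leq\tr(AA^*)^{p/2}$ (the last step being the fact that for $p\geq2$ the entrywise $\ell^p$ norm is dominated by the Schatten-$p$ norm, combined with operator Jensen $\sum_i((AA^*)_{ii})^{p/2}\leq\tr(AA^*)^{p/2}$), one obtains $\EE|D|^p\leq C_p\bigl[(\zeta_4\tr AA^*)^{p/2}+\zeta_{2p}\tr(AA^*)^{p/2}\bigr]$, already of the advertised shape.

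For $O$ I would invoke the decoupling inequality for off-diagonal quadratic chaos (de la Pe\~na--Montgomery-Smith): $\EE|O|^p\leq C_p\,\EE|y^*A\tilde y|^p$ up to a harmless decoupled-diagonal term $\sum_iA_{ii}\bar y_i\tilde y_i$ handled exactly as $D$, where $\tilde y$ is an independent copy of $y$. Conditioning on $\tilde y$, the quantity $y^*(A\tilde y)=\sum_i\bar y_iw_i$ with $w=A\tilde y$ is a linear form in the independent centred $\bar y_i$, so a further application of Rosenthal yields $\EE_y|y^*(A\tilde y)|^p\leq C_p\bigl[\|w\|_2^p+\zeta_p\|w\|_p^p\bigr]$, and it remains to integrate over $\tilde y$. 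The term $\EE\|A\tilde y\|_p^p=\sum_i\EE|(A\tilde y)_i|^p$ is again bounded coordinate-wise by Rosenthal and collapses, via the same two matrix inequalities, to $C_p\zeta_{2p}\tr(AA^*)^{p/2}$ once one multiplies by the $\zeta_p$ from the previous step and uses $\zeta_p^2\leq\zeta_{2p}$ (Jensen, taking $\zeta_\ell=\sup_i\EE|y_i|^\ell$). The term $\EE\|A\tilde y\|_2^p=\EE\bigl(\tilde y^*(A^*A)\tilde y\bigr)^{p/2}$ is a moment of a quadratic form with a \emph{positive semidefinite} matrix $B=A^*A$, which I would control by induction on $p$: the base case $p=2$ is the elementary identity $\EE|y^*By-\tr B|^2\leq C\zeta_4\tr BB^*$, proved by expanding and observing that every cross term vanishes under the moment hypotheses; for $p/2\geq2$ one applies the lemma itself to $B$, using $\tr BB^*=\|A^*A\|_F^2\leq(\tr AA^*)^2$, $\tr(BB^*)^{p/4}=\tr(AA^*)^{p/2}$, $\zeta_p\leq\zeta_{2p}$, and $\zeta_\ell\geq1$ to merge the various powers of $\zeta$ into the two claimed terms; for $2\leq p<4$ one interpolates via Jensen from the $p=2$ case. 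Putting the pieces together gives $\EE|O|^p\leq C_p\bigl[(\zeta_4\tr AA^*)^{p/2}+\zeta_{2p}\tr(AA^*)^{p/2}\bigr]$, and adding the bound for $D$ finishes the proof.

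\emph{Expected main obstacle.} The one genuinely delicate point is the reappearance, after decoupling and conditioning, of a moment of the positive semidefinite quadratic form $\tilde y^*(A^*A)\tilde y$; this is what forces the induction on $p$ with a hand-checked $p=2$ base case. In the alternative martingale route, where one writes $O=\sum_k\gamma_k$ with $\gamma_k=(\EE_k-\EE_{k-1})(y^*Ay)$ and applies Burkholder's inequality in place of decoupling, the same difficulty resurfaces as the need to bound $\EE\bigl(\sum_k\EE_{k-1}|\gamma_k|^2\bigr)^{p/2}$, whose conditional variances are again triangular-truncated quadratic forms. Everything else is bookkeeping: keeping all constants $p$-dependent only, exploiting $\zeta_\ell\geq1$ to collapse products like $\zeta_4^{p/4}$ or $\zeta_p^2$ into $(\zeta_4\tr AA^*)^{p/2}$ or $\zeta_{2p}\tr(AA^*)^{p/2}$, and using the two standard matrix facts — the entrywise $\ell^p$ norm is bounded by the Schatten-$p$ norm for $p\geq2$, and triangular truncation is bounded on Schatten classes — so that the estimate lands exactly on the stated right-hand side rather than on a strictly larger quantity such as $\zeta_{2p}(\tr AA^*)^{p/2}$.
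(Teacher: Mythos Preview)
The paper does not prove this lemma at all: it is stated in the appendix with a citation to \cite[Lemma~B.26]{SIL06} (Bai--Silverstein) and used as a black box, so there is no ``paper's own proof'' to compare against. Your outline is a legitimate self-contained argument and would be a welcome addition if a proof were required. For the record, the proof in the cited reference follows the martingale/Burkholder route you mention as an alternative rather than the decoupling route you lead with: one writes $y^*Ay-\tr A=\sum_k(\EE_k-\EE_{k-1})(y^*Ay)$, applies Burkholder's inequality, and then faces exactly the obstacle you identify---the predictable quadratic variation throws up lower-order quadratic forms, handled by induction on $p$ from the explicit $p=2$ base case. Your decoupling approach is a clean variant that lands on the same induction; the two are essentially equivalent in difficulty and yield the same bound.

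One small point worth tightening if you write this out in full: the inequality $\sum_{i,j}|A_{ij}|^p\leq\tr(AA^*)^{p/2}$ is correct but deserves the two-line justification you sketch only parenthetically. First bound each row via $\ell^p\leq\ell^2$ for $p\geq2$ to get $\sum_j|A_{ij}|^p\leq((AA^*)_{ii})^{p/2}$, then invoke Schur--Horn majorization (the diagonal of the PSD matrix $AA^*$ is majorized by its spectrum) together with convexity of $t\mapsto t^{p/2}$ to obtain $\sum_i((AA^*)_{ii})^{p/2}\leq\tr(AA^*)^{p/2}$. Everything else in your proposal is routine.
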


\begin{lemma}
	\label{le:uniform_conv_sum_zi}
	Let $z_1,\ldots,z_n\in\CC^N$ be independent unitarily invariant vectors with $\Vert z_i\Vert^2=N$. Then, if $0<\liminf_n N/n \leq \limsup_n N/n<1$,
	\begin{align*}
		\max_{1\leq j\leq n} \left| \frac1Nz_j^*\left( \frac1n\sum_{i=1}^n z_iz_i^* \right)^{-1}z_j - 1 \right| \asto 0.
	\end{align*}
	Moreover, there exists $\varepsilon>0$ such that, for all large $n$ a.s.
	\begin{align*}
	\lambda_1\left( \frac1n\sum_{i=1}^n z_iz_i^* \right) \geq \min_{1\leq j\leq n} \left\{ \lambda_1\left( \frac1n\sum_{ \substack{1\leq i\leq n \\ i\neq j} } z_iz_i^* \right) \right\} > \varepsilon.
	\end{align*}
\end{lemma}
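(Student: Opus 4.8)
The plan is to reduce everything to a Gaussian model and then exploit one exact identity together with the moment lemmas already available. Write $S=\frac1n\sum_{i=1}^n z_iz_i^*$, $S_{(j)}=\frac1n\sum_{i\neq j}z_iz_i^*$ and $c_N=N/n$. By unitary (resp.\ orthogonal) invariance we may take $z_i=\sqrt N\,\tilde y_i/\Vert\tilde y_i\Vert$ with $\tilde y_1,\dots,\tilde y_n\in\CC^N$ i.i.d.\ standard complex (resp.\ real) Gaussian, and set $\tilde S=\frac1n\sum_i\tilde y_i\tilde y_i^*$, $\tilde S_{(j)}=\tilde S-\frac1n\tilde y_j\tilde y_j^*$. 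A standard Gaussian deviation bound, a union bound and the Borel--Cantelli lemma give $\max_{1\le i\le n}\bigl|\Vert\tilde y_i\Vert^2/N-1\bigr|\asto0$, so for all large $n$ a.s.\ $\tfrac12\tilde S_{(j)}\preceq S_{(j)}\preceq 2\tilde S_{(j)}$ uniformly in $j$ (and likewise for $S,\tilde S$); thus every assertion may be established with $\tilde y_i$ in place of $z_i$. The structural fact driving the first assertion is $\sum_{j=1}^n z_j^*S^{-1}z_j=\tr\bigl(S^{-1}\sum_j z_jz_j^*\bigr)=n\tr I_N=nN$, i.e.\ $\tfrac1n\sum_{j=1}^n\tfrac1N z_j^*S^{-1}z_j=1$: the quantities to be identified with $1$ average to exactly $1$, so it suffices to show that they concentrate.

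I would first dispatch the ``moreover'' part. The inequality $\lambda_1(S)\ge\lambda_1(S_{(j)})$ is immediate from $S\succeq S_{(j)}$, so only a uniform lower bound on $\min_j\lambda_1(S_{(j)})$ is at stake, and by the reduction above it suffices to bound $\min_j\lambda_1(\tilde S_{(j)})$ away from $0$. The decisive point is that $\tilde S_{(j)}$ is independent of $\tilde y_j$: if $u_j$ is a unit eigenvector of $\tilde S_{(j)}$ for its smallest eigenvalue then, conditionally on the remaining vectors, $|u_j^*\tilde y_j|^2$ has an exponential (resp.\ $\chi^2_1$) tail uniformly in $j$, so a union bound and Borel--Cantelli give $\max_j|u_j^*\tilde y_j|^2\le C\log n$ for all large $n$ a.s. Combining the Rayleigh-quotient identity $\lambda_1(\tilde S_{(j)})=u_j^*\tilde S u_j-\tfrac1n|u_j^*\tilde y_j|^2\ge\lambda_1(\tilde S)-\tfrac1n|u_j^*\tilde y_j|^2$ with the classical fact (e.g.\ \citep{SIL98}) that $\liminf_n\lambda_1(\tilde S)\ge(1-\sqrt{\bar c})^2>0$ a.s., where $\bar c=\limsup_n c_N<1$, yields $\min_j\lambda_1(\tilde S_{(j)})\ge(1-\sqrt{\bar c})^2-C\log n/n$, hence $\min_j\lambda_1(S_{(j)})>\varepsilon$ for all large $n$ a.s.\ with, say, $\varepsilon=\tfrac14(1-\sqrt{\bar c})^2$.

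With this in hand the max convergence follows the scheme used for Lemma~\ref{le:convergence_gamma}. Sherman--Morrison gives $\tfrac1N z_j^*S^{-1}z_j=\bigl(\tfrac1N z_j^*S_{(j)}^{-1}z_j\bigr)(1+c_N\tfrac1N z_j^*S_{(j)}^{-1}z_j)^{-1}$. Since $z_j$ is independent of $S_{(j)}$ and $\Vert S_{(j)}^{-1}\Vert<\varepsilon^{-1}$ for all large $n$ a.s., the trace lemma (Lemma~\ref{le:trace_lemma}, transferred from $\tilde y_j$ to $z_j$ via the uniform norm control above), together with the union bound, Markov's inequality and Borel--Cantelli — exactly as in the proof of Lemma~\ref{le:convergence_gamma}, including the indicator truncation by $1_{\{\lambda_1(S_{(j)})>\varepsilon\}}$ — gives $\max_j\bigl|\tfrac1N z_j^*S_{(j)}^{-1}z_j-\tfrac1N\tr S_{(j)}^{-1}\bigr|\asto0$; the rank-one perturbation lemma (Lemma~\ref{le:rank1perturbation}, after writing $S_{(j)}=(S_{(j)}-\tfrac\varepsilon2 I_N)+\tfrac\varepsilon2 I_N$) then gives $\max_j\bigl|\tfrac1N\tr S_{(j)}^{-1}-\tfrac1N\tr S^{-1}\bigr|\asto0$. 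Writing $\beta_n=\tfrac1N\tr S^{-1}$, which is bounded and bounded away from $0$ by the ``moreover'' part and \citep{SIL98}, uniform continuity of $t\mapsto t/(1+c_N t)$ on the relevant compact range yields $\max_j\bigl|\tfrac1N z_j^*S^{-1}z_j-\beta_n/(1+c_N\beta_n)\bigr|\asto0$; averaging over $j$ and using $\tfrac1n\sum_j\tfrac1N z_j^*S^{-1}z_j=1$ forces $\beta_n/(1+c_N\beta_n)\to1$, and hence $\max_j\bigl|\tfrac1N z_j^*S^{-1}z_j-1\bigr|\asto0$.

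I expect the main obstacle to be the uniform-in-$j$ lower bound on $\lambda_1(S_{(j)})$: one cannot union-bound an almost-sure statement over $j$, and crude perturbation estimates such as $\lambda_1(\tilde S_{(j)})\ge\lambda_1(\tilde S)-\tfrac1n\Vert\tilde y_j\Vert^2$ are too lossy (they give positivity only when $\bar c<\tfrac14$). The exponential-tail argument above circumvents this by exploiting that the \emph{bottom eigenvector} of $\tilde S_{(j)}$ is independent of the removed vector $\tilde y_j$, so the relevant correction $\tfrac1n|u_j^*\tilde y_j|^2$ is of order $\log n/n$ rather than of order $1$; everything else is routine given Lemmas~\ref{le:trace_lemma} and \ref{le:rank1perturbation} and the standard passage between uniform-on-the-sphere and Gaussian vectors.
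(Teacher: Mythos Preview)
Your proof is correct and takes a different, more self-contained route than the paper. The paper's proof defers the entire Gaussian case---both the eigenvalue lower bound $\min_j\lambda_1(\tilde F_{(j)})>\varepsilon$ and the max convergence $\max_j|\tfrac1N\tilde z_j^*\tilde F^{-1}\tilde z_j-1|\asto 0$---to \cite[Lemma~2]{paper} from a prior article, and then only carries out the transfer from the Gaussian $\tilde z_i$ to the sphere-normalized $z_i$ via $\max_i|\,\Vert\tilde y_i\Vert^2/\bar N-1|\asto 0$. You instead prove the Gaussian case from scratch with two ideas the paper does not exploit here: (i) for $\min_j\lambda_1(\tilde S_{(j)})$, you use that the bottom eigenvector $u_j$ of $\tilde S_{(j)}$ is independent of $\tilde y_j$, so that $\tfrac1n|u_j^*\tilde y_j|^2=O(\log n/n)$ uniformly in $j$ by a union bound on exponential tails---this is sharp enough to work for all $\bar c<1$, unlike crude interlacing; (ii) for the max convergence, you bypass any deterministic-equivalent or Marchenko--Pastur computation by invoking the exact algebraic identity $\tfrac1n\sum_j\tfrac1N z_j^*S^{-1}z_j=1$, so that once all quadratic forms concentrate around the common value $\beta_n/(1+c_N\beta_n)$, that value is forced to equal $1$. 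Your approach buys independence from the external citation and is more elementary; the paper's is shorter given the cited lemma. The trace-lemma-with-indicator, rank-one perturbation, and Gaussian-to-sphere transfer steps are essentially the same machinery in both.
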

\begin{proof}
	For readability, we denote $F=\frac1n\sum_{i=1}^nz_iz_i^*$, $F_{(j)}=F-\frac1nz_jz_j^*$, $\tilde{F}=\frac1n\sum_{i=1}^n\tilde{z}_i\tilde{z}_i^*$, and $\tilde{F}_{(j)}=\tilde{F}-\frac1n\tilde{z}_j\tilde{z}_j^*$, where we recall the relation $z_i=\sqrt{\bar N}\tilde{z}_i/\Vert \tilde{y}_i\Vert$ for $\tilde{z}_i$ zero mean $I_N$-covariance Gaussian and $\tilde{y}_i$ zero mean $I_{\bar N}$-covariance Gaussian (non-independent).
	From the proof of \cite[Lemma~2]{paper}, we have:
	\begin{align*}
		\max_{1\leq j\leq n} \left| \frac1N\tilde{z}_j^*\tilde{F}^{-1}\tilde{z}_j - 1 \right| \asto 0
	\end{align*}
	and, there exists $\varepsilon>0$ such that, for all large $n$ a.s.
	\begin{align}
		\label{eq:lambda1_tildeF}
		\lambda_1(\tilde{F}) \geq \min_{1\leq j\leq n}  \lambda_1 (\tilde{F}_{(j)}) > \varepsilon.
	\end{align}
	Now, 
	\begin{align*}
		\min_{1\leq j\leq n} \lambda_1 (F_{(j)}) &\geq \frac{\min_{1\leq j\leq n} \lambda_1(\tilde{F}_{(j)}) }{\max_{1\leq j\leq n} {\bar N}^{-1}\Vert \tilde{y}_j\Vert^2}
	\end{align*}
	Since $\max_{1\leq j\leq n} N^{-1}\Vert \tilde{z}_j\Vert^2\asto 1$ a.s. from standard probability results, we have that for all large $n$ a.s.
	\begin{align*}
		\lambda_1\left( F \right) \geq \min_{1\leq j\leq n} \lambda_1(F_{(j)}) > \varepsilon/2
	\end{align*}
	which already gives the second part of the lemma.
	Using only the outer inequality of \eqref{eq:lambda1_tildeF}, we now have, for all large $n$ a.s.
	\begin{align*}
		\max_{1\leq j\leq n} \left| \frac1N\tilde{z}_j^*F^{-1}\tilde{z}_j - \frac1N\tilde{z}_j^*\tilde{F}^{-1}\tilde{z}_j \right|
		&= \max_{1\leq j\leq n} \left| \frac1N\tilde{z}_j^*F^{-1}\left( \tilde{F}-F \right)\tilde{F}^{-1}\tilde{z}_j \right| \\
		&\leq \max_{1\leq j\leq n} \left\{ \frac1n\sum_{k=1}^n \left| 1 - \frac{\bar N}{\Vert \tilde y_k\Vert^2} \right| \left| \frac1N\tilde{z}_j^* \tilde{F}^{-1} \tilde{z}_k\right|^2\right\} \\
		&\leq \max_{1\leq k\leq n} \left| 1 - \frac{\bar N}{\Vert \tilde y_k\Vert^2} \right| \frac1N \left(\max_{1\leq k\leq n} \left\Vert \tilde{z}_k\right\Vert\right)^2  \frac{4}{\varepsilon^2} \\
		&\asto 0.
	\end{align*}
	Finally, for all large $n$ a.s.
	\begin{align*}
		\max_{1\leq j\leq n} \left| \frac1N\tilde{z}_j^*F^{-1}\tilde{z}_j - \frac1Nz_j^*F^{-1}z_j \right|&= \max_{1\leq j\leq n} \left\{\left| \frac1N\tilde{z}_j^*F^{-1}\tilde{z}_j \right| \left| 1 - \frac{\bar N}{\Vert \tilde y_k\Vert^2} \right| \right\}\\
		&\leq \frac2{\varepsilon} \max_{1\leq k\leq n} \left| 1 - \frac{\bar N}{\Vert \tilde y_k\Vert^2} \right| \max_{1\leq j\leq n} \frac1N \left\Vert \tilde{z}_j\right\Vert^2 \\
		&\asto 0.
	\end{align*}
	The proof is concluded by putting these results together.
\end{proof}

\bibliographystyle{elsarticle-harv}
\bibliography{/home/romano/Documents/PhD/phd-group/papers/rcouillet/tutorial_RMT/book_final/IEEEabrv.bib,/home/romano/Documents/PhD/phd-group/papers/rcouillet/tutorial_RMT/book_final/IEEEconf.bib,/home/romano/Documents/PhD/phd-group/papers/rcouillet/tutorial_RMT/book_final/tutorial_RMT.bib,robust_est}

\begin{thebibliography}{41}
\expandafter\ifx\csname natexlab\endcsname\relax\def\natexlab#1{#1}\fi
\expandafter\ifx\csname url\endcsname\relax
  \def\url#1{\texttt{#1}}\fi
\expandafter\ifx\csname urlprefix\endcsname\relax\def\urlprefix{URL }\fi

\bibitem[{Anderson et~al.(2010)Anderson, Guionnet, and Zeitouni}]{AND10}
Anderson, G.~W., Guionnet, A., Zeitouni, O., 2010. {An introduction to random
  matrices}. Cambridge University Press.

\bibitem[{Bai and Silverstein(1998)}]{SIL98}
Bai, Z.~D., Silverstein, J.~W., Jan. 1998. {No eigenvalues outside the support
  of the limiting spectral distribution of large dimensional sample covariance
  matrices}. The Annals of Probability 26~(1), 316--345.

\bibitem[{Bai and Silverstein(2009)}]{SIL06}
Bai, Z.~D., Silverstein, J.~W., 2009. {Spectral analysis of large dimensional
  random matrices}, 2nd Edition. Springer Series in Statistics, New York, NY,
  USA.

\bibitem[{Bai and Yao(2008)}]{BAI08b}
Bai, Z.~D., Yao, J.~F., 2008. {Limit theorems for sample eigenvalues in a
  generalized spiked population model}.
\newline\urlprefix\url{http://arxiv.org/abs/0806.1141}

\bibitem[{Baik et~al.(2005)Baik, {Ben Arous}, and {P\'ech\'e}}]{BAI05}
Baik, J., {Ben Arous}, G., {P\'ech\'e}, S., 2005. {Phase transition of the
  largest eigenvalue for non-null complex sample covariance matrices}. The
  Annals of Probability 33~(5), 1643--1697.

\bibitem[{Baik and Silverstein(2006)}]{BAI06}
Baik, J., Silverstein, J.~W., 2006. {Eigenvalues of large sample covariance
  matrices of spiked population models}. Journal of Multivariate Analysis
  97~(6), 1382--1408.

\bibitem[{Bianchi et~al.(2011)Bianchi, Najim, Maida, and Debbah}]{BIA10}
Bianchi, P., Najim, J., Maida, M., Debbah, M., 2011. {Performance of some
  eigen-based hypothesis tests for collaborative sensing}. {IEEE} Transactions
  on Information Theory 57~(4), 2400--2419.

\bibitem[{Billingsley(1995)}]{BIL08}
Billingsley, P., 1995. {Probability and Measure}, 3rd Edition. John Wiley and
  Sons, Inc., Hoboken, NJ.

\bibitem[{Cardoso et~al.(2008)Cardoso, Debbah, Bianchi, and Najim}]{CAR08}
Cardoso, L.~S., Debbah, M., Bianchi, P., Najim, J., May 2008. {Cooperative
  spectrum sensing using random matrix theory}. In: {IEEE} Pervasive Computing
  (ISWPC'08). Santorini, Greece, pp. 334--338.

\bibitem[{Couillet et~al.(2011{\natexlab{a}})Couillet, Debbah, and
  Silverstein}]{COU09}
Couillet, R., Debbah, M., Silverstein, J.~W., Jun. 2011{\natexlab{a}}. {A
  deterministic equivalent for the analysis of correlated MIMO multiple access
  channels}. {IEEE} Transactions on Information Theory 57~(6), 3493--3514.

\bibitem[{Couillet and Hachem(2012)}]{COU11e}
Couillet, R., Hachem, W., 2012. Fluctuations of spiked random matrix models and
  failure diagnosis in sensor networks. {IEEE} Transactions on Information
  TheoryTo appear.

\bibitem[{Couillet and Hachem(2013)}]{HAC13}
Couillet, R., Hachem, W., 2013. Analysis of the limit spectral measure of large
  random matrices of the separable covariance type. Journal of Multivariate
  Analysis.

\bibitem[{Couillet et~al.(2013)Couillet, Pascal, and Silverstein}]{paper}
Couillet, R., Pascal, F., Silverstein, J.~W., 2013. {Robust M-Estimation for
  Array Processing: A Random Matrix Approach}. {IEEE} Transactions on
  Information Theory.
\newline\urlprefix\url{http://arxiv.org/abs/1204.5320}

\bibitem[{Couillet et~al.(2011{\natexlab{b}})Couillet, Silverstein, Bai, and
  Debbah}]{COU10b}
Couillet, R., Silverstein, J.~W., Bai, Z.~D., Debbah, M., 2011{\natexlab{b}}.
  {Eigen-inference for energy estimation of multiple sources}. {IEEE}
  Transactions on Information Theory 57~(4), 2420--2439.

\bibitem[{{El Karoui}(2007)}]{ELK07}
{El Karoui}, N., 2007. {Tracy-Widom limit for the largest eigenvalue of a large
  class of complex sample covariance matrices}. The Annals of Probability
  35~(2), 663--714.

\bibitem[{Hachem et~al.(2011)Hachem, Loubaton, Mestre, Najim, and
  Vallet}]{HAC11}
Hachem, W., Loubaton, P., Mestre, X., Najim, J., Vallet, P., 2011. {A subspace
  estimator of finite rank perturbations of large random matrices}. Journal on
  Multivariate AnalysisSubmitted for publication.

\bibitem[{Horn and Johnson(1985)}]{HOR85}
Horn, R.~A., Johnson, C.~R., 1985. {Matrix Analysis}. Cambridge University
  Press.

\bibitem[{Huber(1964)}]{HUB64}
Huber, P.~J., 1964. {Robust estimation of a location parameter}. The Annals of
  Mathematical Statistics 35~(1), 73--101.

\bibitem[{Huber(1981)}]{HUB81}
Huber, P.~J., 1981. Robust Statistics. Wiley Series in Probability and
  Statistics. John Wiley \& Sons.

\bibitem[{Loubaton and Vallet(2010)}]{LOU10b}
Loubaton, P., Vallet, P., 2010. {Almost sure localization of the eigenvalues in
  a Gaussian information plus noise model. Application to the spiked models}.
  Electronic Journal of Probability 16, 1934--1959.

\bibitem[{Maronna(1976)}]{MAR76}
Maronna, R.~A., 1976. {Robust M-estimators of multivariate location and
  scatter}. The annals of statistics, 51--67.

\bibitem[{Maronna et~al.(2006)Maronna, Martin, and Yohai}]{MAR06}
Maronna, R.~A., Martin, D.~R., Yohai, J.~V., 2006. Robust Statistics: Theory
  and Methods. Wiley Series in Probability and Statistics. John Wiley \& Sons.

\bibitem[{Mar\u{c}enko and Pastur(1967)}]{MAR67}
Mar\u{c}enko, V.~A., Pastur, L.~A., Apr. 1967. {Distribution of eigenvalues for
  some sets of random matrices}. Math USSR-Sbornik 1~(4), 457--483.

\bibitem[{Mestre(2008)}]{MES08b}
Mestre, X., Nov. 2008. {Improved estimation of eigenvalues of covariance
  matrices and their associated subspaces using their sample estimates}. {IEEE}
  Transactions on Information Theory 54~(11), 5113--5129.

\bibitem[{Mestre and Lagunas(2008)}]{MES08c}
Mestre, X., Lagunas, M., Feb. 2008. {Modified subspace algorithms for DoA
  estimation with large arrays}. {IEEE} Transactions on Signal Processing
  56~(2), 598--614.

\bibitem[{Nadler(2010)}]{NAD10}
Nadler, B., 2010. Nonparametric detection of signals by information theoretic
  criteria: performance analysis and an improved estimator. {IEEE} Transactions
  on Signal Processing 58~(5), 2746--2756.

\bibitem[{Ollila et~al.(2012)Ollila, Tyler, Koivunen, and Poor}]{OLI12}
Ollila, E., Tyler, D., Koivunen, V., Poor, H.~V., 2012. Complex elliptically
  symmetric distributions: survey, new results and applications. {IEEE}
  Transactions on Signal Processing 60~(11), 5597--5625.

\bibitem[{Pascal et~al.(2008{\natexlab{a}})Pascal, Chitour, Ovarlez, Forster,
  and Larzabal}]{PAS08}
Pascal, F., Chitour, Y., Ovarlez, J.~P., Forster, P., Larzabal, P.,
  2008{\natexlab{a}}. Covariance structure maximum-likelihood estimates in
  compound gaussian noise: Existence and algorithm analysis. {IEEE}
  Transactions on Signal Processing 56~(1), 34--48.

\bibitem[{Pascal et~al.(2008{\natexlab{b}})Pascal, Forster, Ovarlez, and
  Larzabal}]{PAS08b}
Pascal, F., Forster, P., Ovarlez, J.-P., Larzabal, P., 2008{\natexlab{b}}.
  Performance analysis of covariance matrix estimates in impulsive noise.
  Signal Processing, IEEE Transactions on 56~(6), 2206--2217.

\bibitem[{Pastur and {\^S}erbina(2011)}]{PAS11}
Pastur, L., {\^S}erbina, M., 2011. Eigenvalue distribution of large random
  matrices. American Mathematical Society.

\bibitem[{Paul and Silverstein(2009)}]{PAU09}
Paul, D., Silverstein, J.~W., 2009. No eigenvalues outside the support of the
  limiting empirical spectral distribution of a separable covariance matrix.
  Journal of Multivariate Analysis 100~(1), 37--57.

\bibitem[{Schmidt(1986)}]{SCH86}
Schmidt, R., 1986. {Multiple emitter location and signal parameter estimation}.
  IEEE Transactions on Antennas and Propagation 34~(3), 276--280.

\bibitem[{Silverstein and Bai(1995)}]{SIL95}
Silverstein, J.~W., Bai, Z.~D., 1995. {On the empirical distribution of
  eigenvalues of a class of large dimensional random matrices}. Journal of
  Multivariate Analysis 54~(2), 175--192.

\bibitem[{Silverstein and Choi(1995)}]{CHO95}
Silverstein, J.~W., Choi, S., 1995. {Analysis of the limiting spectral
  distribution of large dimensional random matrices}. Journal of Multivariate
  Analysis 54~(2), 295--309.

\bibitem[{Titchmarsh(1939)}]{TIT39}
Titchmarsh, E.~C., 1939. {The Theory of Functions}. Oxford University Press,
  New York, NY, USA.

\bibitem[{Tracy and Widom(1996)}]{TRA96}
Tracy, C.~A., Widom, H., 1996. {On orthogonal and symplectic matrix ensembles}.
  Communications in Mathematical Physics 177~(3), 727--754.

\bibitem[{Tyler(1987)}]{TYL87}
Tyler, D.~E., 1987. A distribution-free $ m $-estimator of multivariate
  scatter. The Annals of Statistics 15~(1), 234--251.

\bibitem[{Vallet et~al.(2010)Vallet, Loubaton, and Mestre}]{LOU10}
Vallet, P., Loubaton, P., Mestre, X., 2010. {Improved subspace estimation for
  multivariate observations of high dimension: the deterministic signals case}.
  {IEEE} Transactions on Information TheorySubmitted for publication.
\newline\urlprefix\url{http://arxiv.org/abs/1002.3234}

\bibitem[{{Van~der~Vaart}(2000)}]{VAN00}
{Van~der~Vaart}, A.~W., 2000. {Asymptotic Statistics}. Cambridge University
  Press, New York.

\bibitem[{Yates(1995)}]{YAT95}
Yates, R.~D., 1995. {A framework for uplink power control in cellular radio
  systems}. {IEEE} Journal on Selected Areas in Communications 13~(7),
  1341--1347.

\bibitem[{Zhang(2006)}]{ZHA09}
Zhang, L., 2006. {Spectral analysis of large dimensional random matrices}.
  Ph.D. thesis, National University of Singapore.

\end{thebibliography}

\end{document}